\documentclass[11pt,reqno]{amsart}
\usepackage{xifthen}
   \topmargin=0in
   \oddsidemargin=0in
   \evensidemargin=0in
   \textwidth=6.5in
   \textheight=8.5in
\usepackage{pkgfile}
\newtheorem{example}{Example}

\title[Universal Limit Theorems in Graph Coloring Problems]{Universal Limit Theorems in Graph Coloring Problems With Connections to Extremal Combinatorics}

\author{Bhaswar B. Bhattacharya}
\address{Department of Statistics, Stanford University, California, USA,
{\tt bhaswar@stanford.edu}}

\author{Persi Diaconis}
\address{Department of Mathematics and Statistics, Stanford University, California, USA,
{\tt diaconis@math.stanford.edu}}

\author{Sumit Mukherjee}
\address{Department of Statistics, Stanford University, California, USA, {\tt sumitm@stanford.edu}}

\begin{document}

\begin{abstract}This paper proves limit theorems for the number of monochromatic edges in uniform random colorings of general random graphs. These can be seen as generalizations of the birthday problem (what is the chance that there are two friends with the same birthday?). It is shown that if the number of colors grows to infinity, the asymptotic distribution is either a Poisson mixture or a Normal depending solely on the limiting behavior of the ratio of the number of edges in the graph and the number of colors. This result holds for any graph sequence, deterministic or random. On the other hand, when the number of colors is fixed, a necessary and sufficient condition for asymptotic normality is determined.  Finally, using some results from the emerging theory of dense graph limits, the asymptotic (non-normal) distribution is characterized for any converging sequence of dense graphs. The proofs are based on moment calculations which relate to the results of Erd\H os and Alon on extremal subgraph counts. As a consequence, a simpler proof of a result of Alon, estimating the number of isomorphic copies of a cycle of given length in graphs with a fixed number of edges, is presented.
\end{abstract}

\subjclass[2010]{05C15, 60C05,  60F05, 05D99}
\keywords{Combinatorial probability, Extremal combinatorics, Graph coloring, Limit theorems}

\maketitle

\section{Introduction}

Suppose the vertices of a finite graph $G$ are colored independently and uniformly at random with $c$ colors. The probability that the resulting coloring has no monochromatic edge, that is, it is a proper coloring, is $\chi_G(c)/c^{|V(G)|}$, where $\chi_G(c)$ denotes the number of proper colorings of $G$ using $c$-colors and $|V(G)|$ is the number of vertices in $G$. The function $\chi_G$ is the chromatic polynomial of $G$, and is a central object in graph theory \cite{chromaticbook,toft_book,toft_unsolved}. This paper studies the limiting distribution of the number of monochromatic edges in uniform random colorings of general random graphs. 

\subsection{Universal Limit Theorems For Monochromatic Edges} Let $\sG_n$ denote the space of all simple undirected graphs on $n$ vertices labeled by $[n]:=\{1,2,\cdots, n\}$. Given a graph $G_n\in\sG_n$ with adjacency matrix $A(G_n)=((A_{ij}(G_n)))_{1\leq i, j \leq n}$, denote by $V(G_n) $ the set of vertices, and by $E(G_n)$ the set of edges of $G_n$, respectively.
The vertices of $G_n$ are colored with $c=c(n)$ colors as follows:  
\begin{equation}\P(v\in V(G_n) \text{ is colored with color } a\in \{1, 2, \ldots, c\}|G_n)=\frac{1}{c},
\label{eq:uniform}
\end{equation}
independent from the other vertices.
If $Y_i$ is the color of vertex $i$, then
\begin{equation}
N(G_n):=\sum_{1\le i<j\le n}A_{ij}(G_n)\pmb 1\{Y_i=Y_j\}=\sum_{(i, j)\in E(G_n)}\pmb 1\{Y_i=Y_j\},
\label{eq:medges}
\end{equation}
denotes the number of monochromatic edges in the graph $G_n$. Note that $\mathbb{P}(N(G_n)=0)$ is the probability that $G_n$ is properly colored. When $c=365$ and $G_n=K_n$ is a complete graph this reduces to the classical birthday problem: $\mathbb{P}(N(K_n)\geq 1)$ is the probability that there are two people with the same birthday in a room with $n$ people. We study the limiting behavior of $N(G_n)$ as the size of the graph becomes large, allowing the graph itself to be random, under the assumption that the joint distribution of $(A(G_n),\vec {Y_n})$ is mutually independent, where $\vec {Y_n}=(Y_1, Y_2, \ldots, Y_n)$ are i.i.d. random variables with  $\mathbb{P}(Y_1=a)=1/c$, for all $a\in [c]$. Note that this setup includes the case where $\{G_1, G_2, \ldots \}$ is a deterministic (non-random) graph sequence, as well.

An application of the easily available version of Stein's method gives a general limit theorem for $N(G_n)$ that works for all color distributions \cite{barbourholstjanson,birthdayexchangeability}. For the uniform coloring scheme, using the fact that the random variables are pairwise independent, Barbour et al. \cite{barbourholstjanson} proved a Poisson approximation for the number of monochromatic edges which works for any sequence of deterministic graphs.  The following theorem gives a new proof and slightly extends this result by showing that the same is true for random graphs. Unlike Stein's method, our proof, which is based on the method of moments, does not give convergence rates. However, it illustrates the connections to extremal combinatorics, and builds up to our later results.


\begin{thm}\label{th:birthdayuniformuniversal}
Let $G_n\in \sG_n$ be a random graph sampled according to some probability distribution over $\sG_n$ and $c = c(n) \to \infty$. Then under the uniform coloring distribution, the following is true:
$$
N(G_n)\stackrel{\sD}\rightarrow \left\{
\begin{array}{ccc}
0  & \text{if}  &  \frac{1}{c}\cdot |E(G_n)|\stackrel{\sP}{\rightarrow} 0,\\
\infty  & \text{if}  &  \frac{1}{c}\cdot |E(G_n)|\stackrel{\sP}{\rightarrow} \infty,\\
W  & \text{if}  &  \frac{1}{c}\cdot |E(G_n)|\stackrel{\sD}{\rightarrow} Z;
\end{array}
\right.
$$
where $\mathbb{P}(W=k)=\frac{1}{k!}\mathbb{E}(e^{-Z}Z^k)$. In other words, $W$ is distributed as a mixture of Poisson random variables mixed over the random variable $Z$.
\label{th:poissonuniversal}
\end{thm}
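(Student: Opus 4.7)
The plan is the method of moments, conditioning on $G_n$ to reduce to a coloring problem on a fixed graph. Set $X_n := |E(G_n)|/c$. For each fixed $k \geq 1$, expanding the falling factorial $N(G_n)^{\underline{k}} = N(N-1)\cdots(N-k+1)$ as a sum over ordered $k$-tuples of distinct edges of $G_n$ and grouping by the isomorphism class $H$ of the subgraph they span, one obtains
$$\mathbb{E}\!\left[N(G_n)^{\underline{k}} \mid G_n\right] = \sum_{H} N_H(G_n)\, c^{-(v(H) - c(H))},$$
where $N_H(G_n)$ counts such ordered tuples spanning a copy of $H$ and the exponent $v(H) - c(H)$ records the number of free color choices needed to monochromatize $H$ (one per connected component). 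Since $v(H) - c(H) = k$ exactly when $H$ is a forest, and $v(H) - c(H) \leq k-1$ otherwise, isolating the forest contribution gives
$$\mathbb{E}\!\left[N(G_n)^{\underline{k}} \mid G_n\right] = \frac{|E(G_n)|^{\underline{k}}}{c^k} + \sum_{H \text{ has a cycle}} N_H(G_n)\left(c^{-(v(H)-c(H))} - c^{-k}\right).$$
Using Erd\H{o}s--Alon-type extremal subgraph bounds of the form $N_H(G_n) = o(|E(G_n)|^{v(H)-c(H)})$ whenever $H$ contains a cycle, the correction is $o_P(X_n^k)$ on the event $\{X_n \leq M\}$ for any fixed $M$, so $\mathbb{E}[N(G_n)^{\underline{k}} \mid G_n] = X_n^k(1 + o_P(1))$ in this regime.

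For the three dichotomies: in the vanishing case $X_n \to 0$ in probability, conditional Markov gives $\mathbb{P}(N \geq 1 \mid G_n) \leq X_n \wedge 1$ and bounded convergence forces $N \to 0$ in probability. In the exploding case $X_n \to \infty$ in probability, the $k=2$ computation together with the pairwise independence $\mathbb{P}(e_1, e_2 \text{ both monochromatic}) = c^{-2}$ for any two distinct edges $e_1, e_2$ yields $\mathrm{Var}(N \mid G_n) = X_n + O(1)$, so Chebyshev implies $N/X_n \to 1$ in probability and $N \to \infty$ in probability. In the critical case $X_n \to Z$ in distribution, tightness of $\{X_n\}$ justifies a truncation at any large $M$: write the conditional probability generating function $\mathbb{E}[s^N \mid G_n] = \sum_k \mathbb{E}[N^{\underline{k}} \mid G_n](s-1)^k/k!$ for $s \in [0,1]$; on $\{X_n \leq M\}$ the factorial-moment asymptotics above identify this with $\exp(X_n(s-1))$ in probability. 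Since both quantities lie in $[0,1]$, bounded convergence delivers $\mathbb{E}[s^N] \to \mathbb{E}[e^{Z(s-1)}] = \mathbb{E}[s^W]$, the probability generating function of the Poisson mixture $W$, and $N \to W$ in distribution follows.

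The main obstacle is verifying the extremal bound $N_H(G_n) = o(|E(G_n)|^{v(H) - c(H)})$ for every cycle-containing $H$ on $k$ edges. For each fixed $k$ only finitely many isomorphism classes $H$ arise, but each requires a subgraph-count estimate of the type developed by Erd\H{o}s and sharpened by Alon---this is precisely the combinatorial content the rest of the paper develops and exploits.
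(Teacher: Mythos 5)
Your proposal is correct in outline but takes a genuinely different route from the paper. The paper works with raw conditional moments $\mathbb{E}(N(G_n)^k\mid G_n)$ and compares them to the moments of a surrogate $M(G_n)=\sum A_{ij}Z_{ij}$ with independent Bernoulli$(1/c)$ edge indicators; because raw moments involve tuples of not-necessarily-distinct edges, multigraphs enter, and the comparison is closed by showing the two moment sequences differ only through cycle-containing subgraphs. Your use of \emph{factorial} moments is cleaner for this particular theorem: only simple graphs with exactly $k$ edges arise, the forest contribution is exactly $|E(G_n)|^{\underline{k}}/c^{k}$, and no surrogate variable is needed. The combinatorial input is the same in both cases -- $N(G_n,H)=o(|E(G_n)|^{|V(H)|-\nu(H)})$ for every $H$ containing a cycle -- but where you cite this as an Erd\H{o}s--Alon-type bound, the paper proves it from scratch: a spectral estimate $N(G_n,C_g)\le \frac{1}{2g}(2|E(G_n)|)^{g/2}$ plus a crude counting argument for the remaining vertices. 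Since part of the paper's point is to be self-contained and to extract Theorem B of Alon as a byproduct, you should either prove this bound or cite Alon's theorem together with the inequality $\gamma(H)<|V(H)|-\nu(H)$ for non-forests (the paper's Corollary on $\gamma$).

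The one step you should tighten is the passage from termwise convergence of the conditional factorial moments to convergence of the conditional probability generating function; this is precisely the point the paper isolates in its appendix lemma on conditional-to-unconditional convergence. Knowing $\mathbb{E}(N^{\underline{k}}\mid G_n)-X_n^{k}\to 0$ in probability for each fixed $k$ does not by itself let you sum over $k$: you need uniform-in-$k$ control of the tail of the series. Your truncation at $\{X_n\le M\}$ is the right device, but to "identify $\mathbb{E}(s^{N}\mid G_n)$ with $\exp(X_n(s-1))$" you must add the Bonferroni-type bracketing
$$\Bigl|\mathbb{E}(s^{N}\mid G_n)-\sum_{k=0}^{K}\mathbb{E}(N^{\underline{k}}\mid G_n)\tfrac{(s-1)^{k}}{k!}\Bigr|\le \mathbb{E}(N^{\underline{K+1}}\mid G_n)\tfrac{(1-s)^{K+1}}{(K+1)!},$$
together with the observation that on $\{X_n\le M\}$ the right side is at most $(M^{K+1}+o_P(1))/(K+1)!$, which vanishes as $K\to\infty$ uniformly in $n$; a diagonal argument in $n$ and $K$, plus tightness of $\{X_n\}$, then finishes the proof. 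With that made explicit your argument is complete, and is arguably more elementary than the paper's characteristic-function version of the same double limit.
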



Theorem \ref{th:poissonuniversal} is universal because it only depends on
the limiting behavior of $|E(G_n)|/c$ and it works for any graph sequence $\{G_n\}_{n\geq 1}$, deterministic or random. The theorem is proved using the method of moments, that is, the conditional moments of $N(G_n)$ are compared with conditional moments of the random variable
 \begin{equation}\label{eq:independentbernoulli}
 M(G_n):=\sum_{1\le i<j\le n}A_{ij}(G_n)Z_{ij},
 \end{equation}
where $\{Z_{ij}\}_{(i, j)\in E(G_n)}$ are independent $\dBer(1/c)$. The combinatorial quantity that needs to be bounded during moment calculations is the number of isomorphic copies of a graph $H$ in another graph $G$, to be denoted by $N(G, H)$. Using spectral properties of the adjacency matrix of $G$, we estimate $N(G, H)$, when $H=C_g$ is a $g$-cycle, This result is then used to show the asymptotic closeness of the conditional moments of $N(G_n)$ and $M(G_n)$.

Theorem \ref{th:birthdayuniformuniversal} asserts that if $\frac{1}{c}|E(G_n)|\stackrel{\sP}{\rightarrow} \infty$, then $N(G_n)$ goes to infinity in probability. Since a Poisson random variable with mean growing to infinity converges to a standard normal distribution after centering by the mean and scaling by the standard deviation, it is natural to wonder whether the same is true for $N(G_n)$. This is not true in general if $|E(G_n)|/c$ goes to infinity, with $c$ fixed as indicated in Example \ref{ex:K2n} and Example \ref{ex:completegraph}. On the other hand, if $c\rightarrow \infty$, an off-the-shelf version of Stein's method can be used to show the normality of $N(G_n)$ under some extra condition on the structure of the graph. However, under the uniform coloring scheme, using the method of moments argument and extremal combinatorics estimates from Alon \cite{alon81}, it can be shown that the normality for $N(G_n)$ 
 is universal whenever $c\rightarrow \infty$.

\begin{thm}Let $G_n\in \sG_n$ be a random graph sampled according to some probability distribution over $\sG_n$. Then for any uniform $c$-coloring of
$G_n$, with $c = c(n) \to \infty$ and $|E(G_n)|/c\stackrel{\sP}\rightarrow \infty$,
$$\frac{1}{\sqrt{|E(G_n)|/c}}\left(N(G_n)-\frac{|E(G_n)|}{c}\right)\stackrel{\sD}{\rightarrow}N(0,1).$$
\label{th:normal}
\end{thm}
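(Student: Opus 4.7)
The plan is to prove the theorem by the method of moments, conditioning on $G_n$ as in the proof of Theorem \ref{th:birthdayuniformuniversal}. Setting
\[ T_n := \frac{N(G_n) - |E(G_n)|/c}{\sqrt{|E(G_n)|/c}}, \]
I will show that for every fixed integer $k \geq 1$ the conditional moment $\mathbb{E}[T_n^k \mid G_n]$ converges in probability to $\mu_k$, the $k$-th moment of the standard normal. Since the normal is determined by its moments, this gives conditional convergence in distribution $T_n \mid G_n \stackrel{\sD}{\rightarrow} N(0,1)$ in probability, and bounded convergence then upgrades it to the unconditional statement of the theorem.

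As in Theorem \ref{th:birthdayuniformuniversal}, I compare $N(G_n)$ with the surrogate $M(G_n) = \sum_{(i,j) \in E(G_n)} Z_{ij}$ having independent $Z_{ij} \sim \dBer(1/c)$. Conditional on $G_n$, the standardized $M(G_n)$ is a sum of $|E(G_n)|$ i.i.d.\ centered bounded Bernoullis whose Lyapunov condition is automatic when $|E(G_n)|/c \to \infty$, so all its conditional moments converge to $\mu_k$. The proof therefore reduces to showing that the difference of the $k$-th conditional central moments of $N(G_n)$ and $M(G_n)$ is $o((|E(G_n)|/c)^{k/2})$ in probability.

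Expanding both $k$-th central moments as sums over ordered $k$-tuples $(e_1, \ldots, e_k) \in E(G_n)^k$ and grouping by the isomorphism type of the multigraph $F$ they span, the joint expectation of the centered indicators $\mathbf{1}\{Y_{u_r} = Y_{v_r}\} - 1/c$ agrees with that of the centered independent Bernoullis $Z_{e_r} - 1/c$ whenever the underlying simple graph of $F$ is a forest; this is because conditioning on the neighbor of a leaf yields conditional mean zero, exactly as in the independent model. Hence the difference of moments is supported on those tuples whose underlying simple graph contains a cycle. Among the forest tuples, only disjoint unions of $k/2$ double edges survive (all others vanish due to a degree-one vertex), and these produce the common Gaussian contribution $(k-1)!!(|E(G_n)|/c)^{k/2}(1 + o(1))$ in both models, which cancels in the difference.

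The principal obstacle is controlling the remaining cyclic contributions. For any multigraph $F$ on $v$ vertices whose underlying simple graph contains a cycle of length $g$, the joint expectation has absolute value at most $c^{-(v - \mathrm{comp}(F))}$ (from the identity used in Theorem \ref{th:birthdayuniformuniversal}), while the number of embeddings of $F$ into $G_n$ is controlled by Alon's extremal estimate \cite{alon81}, $N(G_n, C_g) = O(|E(G_n)|^{g/2})$, combined with elementary counts for the acyclic portion of $F$. A careful bookkeeping shows that every such cyclic class contributes at most $O(c^{-\alpha(F)}) \cdot (|E(G_n)|/c)^{k/2}$ for some $\alpha(F) > 0$ measuring the excess of $F$ over the Gaussian pair-matching baseline. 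Summing over the finitely many isomorphism classes that arise for a fixed $k$ and using $c \to \infty$ yields the required $o((|E(G_n)|/c)^{k/2})$ bound in probability, completing the moment matching and hence the theorem.
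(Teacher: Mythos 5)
Your overall architecture coincides with the paper's: standardize, compare conditional moments with the independent-Bernoulli surrogate $M(G_n)$, observe that forest-type multigraphs contribute identically to both models, and control the cyclic types. The conditional-to-unconditional step you sketch (conditional method of moments plus bounded convergence, along subsequences) is sound, and arguably cleaner than the paper's route through Lemma \ref{abc}, which instead requires verifying a uniform-in-$n$ growth condition on the conditional moments. However, the step you defer to ``careful bookkeeping'' is exactly where the real work lies, and the counting tool you specify is not strong enough to carry it out.

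Concretely, you propose to bound the number of embeddings of a cyclic type $F$ by taking the extremal cycle count $N(G_n,C_g)=O(|E(G_n)|^{g/2})$ for one cycle and counting the rest of $F$ ``elementarily.'' That yields $N(G_n,F_S)\lesssim |E(G_n)|^{|V(F)|-\nu(F)+1-g/2}$, which suffices in the Poisson regime (Lemma \ref{lm:count}) but fails here once $F_S$ contains two independent cycles. Take $F$ the bowtie (two triangles sharing a vertex), which occurs in the sixth moment: $|V(F)|=5$, $|E(F)|=6$, and your bound gives $|E(G_n)|^{7/2}$, so the corresponding term in the difference of sixth conditional moments is of order $(|E(G_n)|/c)^{-3}\,|E(G_n)|^{7/2}\,c^{-4}=|E(G_n)|^{1/2}c^{-1}$, which for $G_n=K_n$ with $c=n$ stays bounded away from $0$ and for $c=\sqrt{n}$ diverges. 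The true count is $\Theta(|E(G_n)|^{5/2})$, and to get the right exponent in general one needs Alon's full theorem $N(\ell,H)=\Theta(\ell^{\gamma(H)})$ together with the structural facts $\gamma(H)\le |V(H)|-\nu(H)$ with equality iff $H$ is a disjoint union of stars (Corollary \ref{cor:gammah}) and $\gamma(H)\le |E(H)|/2$ when $d_{\min}(H)\ge 2$ (Lemma \ref{lm:degree_one}); this is how the paper's Lemmas \ref{lm:moment_comparison} and \ref{lm:normal_difference_zero} dispose of every cyclic type. Two smaller points. First, your leaf-conditioning argument only shows that types with a vertex of multidegree one contribute zero to both models; for forest types in which every vertex has multidegree at least two (e.g.\ a doubled path) the expectations are nonzero, and their equality requires the explicit expansion of Lemma \ref{lm:tree_equal}. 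Second, it is not true that among forest types only disjoint unions of doubled edges survive: doubled stars have no degree-one vertex and survive as well. This misstatement is harmless for your argument, since all forest contributions cancel in the difference, but it should not be used to identify the Gaussian contribution combinatorially.
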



In the proof of Theorem \ref{th:normal} the conditional central moments of $N(G_n)$ are compared with the conditional central moments of $M(G_n)$. In this case, a combinatorial quantity involving the number of multi-subgraphs of $G_n$ shows up. Bounding this quantity requires extensions of Alon's \cite{alon81,alon86} results to multi-graphs and leads to some other results in graph theory which may be of independent interest. Error rates for the above CLT was proved recently by Fang \cite{xiao}.

\subsection{Normality For Fixed Number of Colors}

The limiting distribution of $N(G_n)$ might not be asymptotically normal if $|E(G_n)|\rightarrow \infty$, but the number of colors $c$ is fixed. In fact, in Example \ref{ex:K2n} it will be the shown that if the graph $K_{2, n}$ is uniformly colored with $c=2$ colors, then the limiting distribution of $N(G_n)$ is a mixture of a standard normal and point mass at 0.

However, for many graph sequences the limiting distribution is asymptotically normal. To characterize graph sequences for which asymptotic normality holds we introduce the following definition:


\begin{defn}
A deterministic sequence of graphs $\{G_n\}_{n\geq 1}$ is said to satisfy the {\it asymptotic 4-cycle free (ACF4) condition} if 
\begin{equation}
N(C_4, G_n)=o(|E(G_n)|^{2}).
\label{eq:acf}
\end{equation}
A sequence of random graphs $\{G_n\}_{n\geq 1}$ is said to satisfy the {\it ACF4 condition in probability} if 
\begin{equation}
N(C_4, G_n)=o_P(|E(G_n)|^{2}).
\label{eq:acf}
\end{equation}
(The notation $X_n=o_P(a_n)$ means that $X_n/a_n$ converges to zero in probability as $n\rightarrow \infty$.)
\end{defn}


The following theorem shows that the ACF4 condition is necessary and sufficient for the normality of $N(G_n)$ when the number of colors $c$ is fixed.  The proof proceeds along similar lines as in Theorem \ref{th:normal}. However, in this case, more careful estimates are required to bound the number of multi-subgraphs of $G_n$.

\begin{thm}Let $G_n\in \sG_n$ be a random graph sampled according to some probability distribution over $\sG_n$. Then for any uniform $c$-coloring of $G_n$, with $c\geq 2$, fixed and $|E(G_n)|\rightarrow\infty$,
\begin{equation}\frac{1}{\sqrt{|E(G_n)|/c}}\left(N(G_n)-\frac{|E(G_n)|}{c}\right)\stackrel{\sD}{\rightarrow}N\left(0,1-\frac{1}{c}\right),
\label{eq:normal_fixed}
\end{equation}
if and only if $\{G_n\}_{n\geq 1}$ satisfies the ACF4 condition in probability.
\label{th:normal_fixed}
\end{thm}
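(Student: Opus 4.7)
The plan is to use the method of moments, in the same spirit as the proof of Theorem~\ref{th:normal}, but with sharper combinatorial inputs that are sensitive to $N(C_4,G_n)$. I work conditionally on $G_n$: since the target law $N(0,1-1/c)$ does not depend on $G_n$, the passage from deterministic graphs to random graphs with ACF4 in probability is handled by a dominated-convergence / Slutsky argument at the end, as in Theorem~\ref{th:poissonuniversal}. So the core task, for deterministic $\{G_n\}$ with $|E(G_n)|\to\infty$, is to show $T_n:=\bigl(N(G_n)-|E(G_n)|/c\bigr)/\sqrt{|E(G_n)|/c}\stackrel{\sD}\to N(0,1-1/c)$ iff $N(C_4,G_n)=o(|E(G_n)|^2)$.

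Set $\tilde Z_e:=\mathbf 1\{Y_i=Y_j\}-1/c$ for $e=(i,j)\in E(G_n)$. Pairwise independence of the indicators gives $\mathbb E[\tilde Z_e\tilde Z_f\mid G_n]=0$ for $e\ne f$, so the conditional variance equals $|E(G_n)|(1/c)(1-1/c)$ and the normalization already produces the correct limiting variance $1-1/c$. For the $k$-th conditional central moment I would expand
\[
\mathbb E\bigl[(N(G_n)-|E(G_n)|/c)^k\mid G_n\bigr]=\sum_{(e_1,\ldots,e_k)\in E(G_n)^k}\mathbb E\!\left[\prod_{i=1}^k\tilde Z_{e_i}\,\Big|\,G_n\right]
\]
and group by the vertex-labeled multigraph $H$ the ordered tuple traces. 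Two observations organize the sum: (a) if some vertex of $H$ has degree one, the solitary factor $\tilde Z_e$ is conditionally mean-zero and kills the term; and (b) any multigraph with $k$ edges and minimum degree $\ge 2$ has at most $k$ vertices, with equality iff every connected component is either a double edge or a simple cycle $C_g$ for some $g\ge 3$. A direct computation gives factor contribution $(c-1)/c^2$ for a double edge and $(c-1)/c^g$ for a simple cycle $C_g$.

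The main term comes from tuples tracing out $k/2$ disjoint double edges (requires $k$ even); ordered counting gives $(k-1)!!\,|E(G_n)|^{k/2}\bigl((c-1)/c^2\bigr)^{k/2}+O(|E(G_n)|^{k/2-1})$, which after the normalization equals $(k-1)!!(1-1/c)^{k/2}$, the $k$-th moment of $N(0,1-1/c)$. For odd $k$ there is no double matching and the main term vanishes, matching the Gaussian. Every other admissible $H$ contains at least one cycle component $C_g$, and its normalized contribution is a bounded constant times $\prod_{i}N(C_{g_i},G_n)/|E(G_n)|^{g_i/2}$ (edges of multiplicity $\ge 3$ contribute additional factors of $|E(G_n)|^{-1/2}$ and are killed automatically by $|E(G_n)|\to\infty$). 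The sufficiency direction therefore reduces to proving $N(C_g,G_n)=o(|E(G_n)|^{g/2})$ for every $g\ge 3$ under ACF4. For even $g\ge 4$ this follows from the spectral bound $\mathrm{tr}(A(G_n)^g)\le(2|E(G_n)|)^{(g-4)/2}\mathrm{tr}(A(G_n)^4)$ combined with $\mathrm{tr}(A(G_n)^4)=8N(C_4,G_n)+O(|E(G_n)|)$; for $g=3$, Cauchy--Schwarz yields $9N(C_3,G_n)^2\le|E(G_n)|\bigl(4N(C_4,G_n)+3N(C_3,G_n)\bigr)$, giving $N(C_3,G_n)=O(|E(G_n)|)+O(\sqrt{|E(G_n)|N(C_4,G_n)})=o(|E(G_n)|^{3/2})$; odd $g\ge 5$ is treated analogously by iterating Cauchy--Schwarz on closed-walk counts. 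Extending this cycle-counting reduction to the general multigraph shapes that arise for $k\ge 4$ is where the bulk of the combinatorial work lies and is the step I expect to be the main obstacle.

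For the converse I would isolate the $C_4$ contribution to the fourth moment. The expansion gives a normalized fourth central moment of $3(1-1/c)^2+\kappa_c\,N(C_4,G_n)/|E(G_n)|^2+o(1)$ with an explicit positive constant $\kappa_c$ (from the per-$C_4$ factor $(c-1)/c^4$ together with the orderings of the four edges). If $N(C_4,G_n)/|E(G_n)|^2$ does not tend to zero, the fourth moment fails to converge to the Gaussian value $3(1-1/c)^2$, and uniform integrability of $\{T_n^4\}$ (from the sixth-moment bound on the sufficiency side, or directly from Alon's $N(C_6,G_n)\le(2|E(G_n)|)^3$) rules out weak convergence of $T_n$ to $N(0,1-1/c)$. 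The random-graph case is recovered by conditioning on $G_n$ and invoking ACF4 in probability.
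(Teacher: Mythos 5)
Your overall architecture (method of moments conditional on $G_n$, classification of the multigraphs that survive the degree-one cancellation, fourth-moment computation for necessity) is the same as the paper's, and your $g=3$ Cauchy--Schwarz bound and the necessity argument via uniform integrability are essentially correct. But there are two genuine gaps in the sufficiency direction, and they are exactly where the paper has to work hardest.

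First, your classification in (b) only disposes of the multigraphs with $|V(H)|=|E(H)|$ (disjoint unions of doubled edges and simple cycles). The dangerous terms are the multigraphs $H$ with $d_{\min}(H)\ge 2$, $|V(H)|<|E(H)|$, and fractional stable number $\gamma(H)=|E(H)|/2$: for these the generic bound $M(G_n,H)\lesssim_H |E(G_n)|^{\gamma(H)}=|E(G_n)|^{|E(H)|/2}$ is \emph{not} $o(|E(G_n)|^{|E(H)|/2})$, so they contribute at the critical order unless one exploits ACF4. A concrete example is a $4$-cycle with a doubled pendant edge attached at one vertex ($|V(H)|=5$, $|E(H)|=6$, $\gamma(H)=3$). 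The paper handles these in Lemma \ref{lm:count_cycle}, using the half-integrality of extreme points of the fractional stable set polytope and the matching structure of Lemma \ref{lm:gamma_structural} to peel off either a saturating matching or a cycle and gain the missing factor. You correctly flag this as "the main obstacle," but the reduction "sufficiency therefore reduces to $N(C_g,G_n)=o(|E(G_n)|^{g/2})$" is not yet justified without it.

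Second, your spectral route from ACF4 to $N(C_g,G_n)=o(|E(G_n)|^{g/2})$ for even $g$ fails. The identity $\tr(A(G_n)^4)=8N(C_4,G_n)+O(|E(G_n)|)$ is false: closed $4$-walks that backtrack along $2$-paths contribute $\sum_v d(v)^2$, which can be $\Theta(|E(G_n)|^2)$ even when $N(C_4,G_n)=0$ — the star $K_{1,n}$ has $\tr(A^4)=2n^2$ and no $4$-cycles. This is precisely the ACF4-versus-USN distinction the paper draws in Section \ref{sec:star}: ACF4 does not control $\|\pmb\lambda(G_n)\|_\infty$ or $\tr(A^4)$, so the bound $\tr(A^g)\le(2|E(G_n)|)^{(g-4)/2}\tr(A^4)$ gives only $O(|E(G_n)|^{g/2})$, not $o(\cdot)$. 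The paper's Theorem \ref{th:4cycle} proves the implication purely combinatorially, by splitting the sum over pairs $(a,b)$ according to whether the codegree $s_2(a,b)$ (resp.\ the $h$-path count $s_h(a,b)$) exceeds a threshold, and inducting on the cycle length; your odd-$g$ argument is in this spirit and can be made to work, but the even case needs the same combinatorial treatment rather than eigenvalues.
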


For the case $c=2$, the random variable $N(G_n)-|E(G_n)|/c$ can be rewritten as a quadratic form as follows: Let $\vec X=(X_1, X_2, \cdots, X_{|V(G_n)|})'$ be a vector of independent Rademacher random variables, then
$$N(G_n)-\frac{|E(G_n)|}{c}=\frac{1}{2}\sum_{i\leq j}A_{ij}(G_n)X_{i}X_j.$$
The asymptotic normality of $V_n:=\sum_{i\leq j} a_{ij}X_iX_j$ for a general sequence of symmetric matrices $\{\pmb A_n=((a_{ij}))\}$ and i.i.d. real-valued random variables $X_1, X_2, \cdots, X_n$ with zero mean, unit variance and finite fourth moment, is a well-studied problem.  The classical sufficient condition for asymptotic normality is (refer to Rotar \cite{rotar}, Hall \cite{hall}, de Jong \cite{dejong} for further details) 
\begin{equation}
\lim_{n\rightarrow \infty} \sigma_n^{-4}\E(V_n-\E(V_n))^4=3, \text{ and } \lim_{n\rightarrow \infty}\sigma_n^{-2}\max_{i \in [n]}\sum_{j=1}^n a^2_{ij}=0,
\label{eq:quadratic_form_condition}
\end{equation}
where $\sigma_n^2=\frac{1}{2}\tr(\pmb A_n^2)=\Var(V_n)$.
This condition is also necessary when  $X_1, X_2, \cdots, X_n$ are i.i.d. $N(0, \tau^2)$, for some $\tau>0$. 
Error bounds were obtained by G\"otze and Tikhomirov \cite{gotze,gotze_applications}. Recently, Nourdin et al. \cite{greater_three} showed that the first condition in (\ref{eq:quadratic_form_condition}) is sufficient for asymptotic normality whenever $\E(X_1^4)\geq 3$. This is an example of the {\it Fourth Moment Phenomenon} which asserts that for many sequences of non-linear functionals of random fields, a CLT is simply implied by the convergence of the corresponding sequence of fourth moments (refer to Nourdin et al. \cite{weiner_chaos} and the references therein for further details).

For the case where $X_1, X_2, \cdots, X_n$ are Rademacher variables, error bounds were also proved by Chatterjee \cite{chatterjee_normal_approximation}. Later, Nourdin et al. \cite{rademacher} showed that in this case, the first condition in (\ref{eq:quadratic_form_condition}) is necessary and sufficient for asymptotic normality. For the special case where the matrix $\pmb A_n$ is the adjacency matrix of a graph, it is easy to see that the fourth moment condition in (\ref{eq:quadratic_form_condition}) is equivalent to the ACF4 condition, making Theorem \ref{th:normal_fixed} an instance of the Fourth Moment Phenomenon. For the case of graphs, Theorem \ref{th:normal_fixed} reconstructs the result about quadratic forms for $c=2$, and extends it  for general $c\geq 3$ when $N(G_n)$ can no longer be written as a single quadratic form. 

\subsection{Limiting Distribution For Converging Sequence of Dense Graphs}

As discussed above, asymptotic normality of the number of monochromatic edges, for a fixed number of colors, does not hold when the ACF4 condition is not satisfied. 
In particular, any sequence of dense graphs $G_n$ with $\Theta(n^2)$ edges does not satisfy the ACF4 condition and hence the limiting distribution of $N(G_n)$ is not asymptotically normal. This raises the question of characterizing the limiting distribution of $N(G_n)$ for dense graphs. Recently, Lov\'asz and coauthors \cite{graph_limits_I,graph_limits_II,lovasz_book} developed a limit theory for dense graphs. Using results form this theory, we obtain the limiting distribution of $N(G_n)$ for any converging sequence of dense graphs.

\subsubsection{Graph Limit Theory}Graph limit theory connects various topics such as graph homomorphisms, Szemer\'edi regularity lemma, quasirandom graphs, graph testing and extremal graph theory, and has even found applications in statistics and related areas \cite{chatterjee_pd}. For a detailed exposition of the theory of graph limits refer to Lov\'asz \cite{lovasz_book}. In the following, we mention the basic definitions about convergence of graph sequences. If $F$ and $G$ are two graphs, then
$$t(F,G) :=\frac{|\hom(F,G)|}{|V (G)| |V (F)|},$$
where  $|\hom(F,G)|$ denotes the number of homomorphisms of $F$ into $G$. In fact, $t(F, G)$ denotes the probability that a random mapping $\phi: V (F) \rightarrow V (G)$ defines a graph homomorphism. The basic definition is that a sequence $G_n$ of graphs converges if $t(F, G_n)$ converges for every graph $F$.

There is a natural limit object in the form of a function $W \in \sW$, where $\sW$ is the space of all measurable functions from $[0, 1]^2$ into $[0, 1]$ that satisfy $W(x, y) = W(y,x)$ for all $x, y$. Conversely, every such function arises as the limit of an appropriate graph sequence. This limit object determines all the limits of subgraph densities:
if $H$ is a simple graph with $V (H)= \{1, 2, \ldots, |V(H)|\}$, let
$$t(H,W) =\int_{[0,1]^{|V(H)|}}\prod_{(i,j)\in E(H)}W(x_i,x_j) dx_1dx_2\cdots dx_{|V(H)|}.$$
A sequence of graphs $\{G_n\}_{n\geq 1}$ is said to converge to $W$ if for every finite simple graph $H$,
\begin{equation}
\lim_{n\rightarrow \infty}t(H, G_n) = t(H, W).
\label{eq:graph_limit}
\end{equation}
A sequence of random graphs $\{G_n\}_{n\geq 1}$ is said to converge to $W$ in distribution, if the sequence $$\{t(H, G_n): H \text{ is a finite simple graph}\}\dto\{t(H, W): H \text{ is a finite simple graph}\},$$ in $[0, 1]^\N$. The limit objects, that is, the elements of $\sW$, are called {\it graph limits} or {\it graphons}. A finite simple graph $G$ on $[n]$ can also be represented as a graph limit in a natural way: Define $f^G(x, y) =\boldsymbol 1\{(\ceil{nx}, \ceil{ny})\in E(G)\}$, that is, partition $[0, 1]^2$ into $n^2$ squares of side length $1/n$, and define $f^G(x, y)=1$ in the $(i, j)$-th square if $(i, j)\in E(G)$ and 0 otherwise. Observe that $t(H, f^{G}) = t(H,G)$ for every simple
graph $H$ and therefore the constant sequence $G$ converges to the graph limit $f^G$.  It turns out that the notion of convergence in terms of subgraph densities outlined above can be suitably metrized using the so-called {\it cut distance} (refer to \cite[Chapter 8]{lovasz_book}).

%

Every function $W\in \sW$ defines an operator $T_W: L_2[0, 1]\rightarrow L_2[0, 1]$, by $$(T_Wf)(x)=\int_0^1W(x, y)f(y)dy.$$
$T_W$ is a Hilbert-Schmidt operator, which is compact and has a discrete spectrum, that is, a countable multiset of non-zero real eigenvalues $\{\lambda_i(W)\}_{i \in \N}$. In particular, every non-zero eigenvalue has finite multiplicity and
$$\sum_{i=1}^\infty\lambda_i^2(W)=\int_{[0, 1]^2}W(x, y)^2dxdy:=||W||_2^2.$$

\subsubsection{Limiting Distribution of $N(G_n)$ for Dense Graphs}
Under the assumption that the sequence of random graphs $G_n$ converges to a limit $W$ in distribution, one can derive the limiting distribution of $N(G_n)$ in terms of the eigenvalues of $T_W$, whenever $\int_{[0, 1]^2}W(x, y)dxdy>0$ almost surely. Since $|E(G_n)|/n^2\pto\frac{1}{2}\int_{[0, 1]^2}W(x, y)dxdy$, the positivity condition just ensures that the graph sequence $G_n$ has $\Theta(n^2)$ edges with high probability, that is, it is dense.

\begin{thm}\label{th:chisquare}
Let $\{G_n\}_{n\geq 1}$ be a sequence of random graphs converging in distribution to a graphon $W\in\sW$, such that
almost surely $\int_{[0, 1]^2}W(x, y)dxdy>0$. Then for any uniform $c$-coloring of
$G_n$, with $c\geq 2$ fixed and $|E(G_n)|\rightarrow\infty$,
\begin{equation}
\frac{1}{\sqrt{2|E(G_n)|}}\left(N(G_n)-\frac{|E(G_n)|}{c}\right)\stackrel{\sD}{\rightarrow}\frac{1}{2c}\sum_{i=1}^\infty\left(\frac{\lambda_i(W)}{(\sum_{j=1}^\infty\lambda^2_j(W))^{\frac{1}{2}}}\right)\xi_i,
\label{eq:chisquare}
\end{equation}
where $\{\xi_i\}_{i\in \N}$ are i.i.d. $\chi^2_{(c-1)}-(c-1)$ random variables independent of $W$.
\label{th:chisquare}
\end{thm}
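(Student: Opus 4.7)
The plan is to convert $N(G_n)-|E(G_n)|/c$ into a sum of $c-1$ quadratic forms in iid centered vectors and then extract the chi-square mixture using the spectral decomposition of $A(G_n)$ combined with graphon convergence. Let $Z_i^{(a)}=\mathbf{1}\{Y_i=a\}-1/c$; the algebraic identity $\mathbf{1}\{Y_i=Y_j\}-1/c=\sum_{a=1}^{c}Z_i^{(a)}Z_j^{(a)}$ is a direct expansion. Since $\sum_{a}Z_i^{(a)}=0$, pick any orthonormal basis $v_1,\dots,v_{c-1}$ of $\{z\in\mathbb{R}^{c}:\sum_{a}z_{a}=0\}$ and set $U_i^{(k)}=\sum_{a}v_k(a)Z_i^{(a)}$. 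Then $(U_i^{(1)},\dots,U_i^{(c-1)})$ are iid across $i$ with mean $0$ and covariance $(1/c)I_{c-1}$, and
\[
N(G_n)-\frac{|E(G_n)|}{c}=\frac{1}{2}\sum_{k=1}^{c-1}(U^{(k)})^{\top}A(G_n)\,U^{(k)}.
\]

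Let $\mu_m^{(n)}$, $e_m^{(n)}$ denote the eigenvalues (sorted by decreasing $|\mu_m^{(n)}|$) and orthonormal eigenvectors of $A(G_n)$, and set $\tilde X_m^{(n)}:=\sum_{k=1}^{c-1}(U^{(k)}\cdot e_m^{(n)})^2-(c-1)/c$. Using $\|A(G_n)\|_F^2=2|E(G_n)|$ and $\sum_{m}\mu_m^{(n)}=\tr A(G_n)=0$ (which absorbs the centering at no cost),
\[
\frac{N(G_n)-|E(G_n)|/c}{\sqrt{2|E(G_n)|}}=\frac{1}{2}\sum_{m}\frac{\mu_m^{(n)}}{\|A(G_n)\|_F}\tilde X_m^{(n)}.
\]
Graphon convergence $G_n\Rightarrow W$, through a Skorohod almost-sure representation, yields $\mu_m^{(n)}/n\to\lambda_m(W)$ for each fixed $m$ together with convergence of the corresponding eigenvectors to $L^{2}$-normalized eigenfunctions of $T_{W}$; in particular, the principal eigenvectors are asymptotically delocalized, $\max_i|e_m^{(n)}(i)|\to 0$. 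Combined with the Parseval identification $\|A(G_n)\|_F/n\to\|W\|_2$, one obtains $\mu_m^{(n)}/\|A(G_n)\|_F\to\lambda_m(W)/\|W\|_2$ for each fixed $m$.

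For fixed $M$, condition on $\{G_n\}$ and apply a multivariate CLT to $\bigl(U^{(k)}\cdot e_m^{(n)}\bigr)_{k\leq c-1,\,m\leq M}$: this is a sum of $n$ bounded independent summands with total covariance $(1/c)I$ by orthonormality of the $e_m^{(n)}$'s, so the Lindeberg condition follows from delocalization, giving a jointly Gaussian limit $(G_{km}/\sqrt{c})_{k,m}$ with iid $N(0,1)$ entries. Hence $\tilde X_m^{(n)}\to (1/c)\xi_m$ jointly in $m\leq M$, where $\xi_m$ are iid $\chi^{2}_{c-1}-(c-1)$, and the $m\leq M$ head of the display converges to $\tfrac{1}{2c}\sum_{m\leq M}\tfrac{\lambda_m(W)}{\|W\|_2}\xi_m$. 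The tail over $m>M$ has conditional variance at most $\tfrac{c-1}{2c^{2}}\sum_{m>M}(\mu_m^{(n)})^{2}/\|A(G_n)\|_F^{2}\to\tfrac{c-1}{2c^{2}}\bigl(1-\sum_{j\leq M}\lambda_j^{2}(W)/\|W\|_2^{2}\bigr)$, which tends to $0$ as $M\to\infty$ by Parseval for $T_{W}$; the same bound controls the tail of the limiting series. A truncation in $M$ after $n$, together with Slutsky, then completes the proof.

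The main obstacle is the multivariate CLT for the top-$M$ eigenvector projections, which requires delocalization of the principal eigenvectors of $A(G_n)$. Delocalization follows from their convergence to $L^{2}$-normalized eigenfunctions of $T_{W}$, with eigenvalue multiplicities in $T_{W}$ handled at the invariant-subspace level, since only $\sum_{k}(U^{(k)}\cdot e_m^{(n)})^{2}$ enters the expression and is invariant under any orthonormal re-basing inside an eigenspace. A secondary subtlety is the Parseval identification $\|A(G_n)\|_F/n\to\|W\|_2$, which needs the graph sequence to capture the full Hilbert--Schmidt mass of $T_{W}$ in the limit; one isolates this using the Hilbert--Schmidt convergence of $A(G_n)/n$ to $T_{W}$ available under the hypotheses of the theorem.
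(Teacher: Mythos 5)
Your route is genuinely different from the paper's: the paper first replaces the colour indicators by centered Gaussians $S_{va}=X_{va}-\overline X_{v\cdot}$ via a conditional moment comparison (multigraph counting plus the conditional-to-unconditional transfer lemma in the appendix), after which the quadratic form diagonalizes \emph{exactly} into i.i.d.\ $\tfrac1c\chi^2_{c-1}$ variables and the limit is read off from convergence of the cycle densities $t(C_g,G_n)$, $g\ge3$; you instead keep the original indicators and run a multivariate CLT on the top eigenvector projections plus a variance bound on the spectral tail. One of the two obstacles you flag is repairable: the delocalization you need does not require (and cannot be obtained from) convergence of eigenvectors to eigenfunctions of $T_W$ under cut-metric convergence; it follows from the elementary bound $|\mu_m^{(n)}||e_m^{(n)}(i)|=|\sum_j A_{ij}e_m^{(n)}(j)|\le\sqrt{d_i}\le\sqrt n$, so $\max_i|e_m^{(n)}(i)|\le\sqrt n/|\mu_m^{(n)}|\to0$ whenever $\lambda_m(W)\neq0$.

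The other obstacle is a genuine gap, and it is fatal as written. You assert the ``Parseval identification'' $\|A(G_n)\|_F/n\to\|W\|_2$, but $\|A(G_n)\|_F^2/n^2=2|E(G_n)|/n^2\to\int_{[0,1]^2}W\,dx\,dy$, whereas $\|W\|_2^2=\int_{[0,1]^2}W^2\,dx\,dy=\sum_j\lambda_j^2(W)$. Since $0\le W\le1$, $\int W^2\le\int W$ with equality only when $W$ is $\{0,1\}$-valued a.e.; cut-metric convergence does \emph{not} give Hilbert--Schmidt convergence of $A(G_n)/n$ to $T_W$ (the multigraph density $t(C_2,\cdot)$ is not continuous in the cut metric). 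Consequently your tail variance satisfies $\sum_{m>M}(\mu_m^{(n)})^2/\|A(G_n)\|_F^2\to1-\sum_{m\le M}\lambda_m^2(W)/\int W\to1-\int W^2/\int W$, which is strictly positive for a generic graphon, so the truncation step collapses: the spectral bulk survives and contributes an independent Gaussian. Concretely, for $G(n,p)$ with $p<1$ and $c=2$, writing $A_{ij}=p+(A_{ij}-p)$ gives
\begin{equation*}
\frac{N(G_n)-|E(G_n)|/2}{\sqrt{2|E(G_n)|}}\;\Longrightarrow\;\frac{\sqrt p}{4}\left(\chi^2_{1}-1\right)+\frac{\sqrt{2(1-p)}}{4}\,Z,
\end{equation*}
with $Z$ an independent standard normal, rather than $\tfrac14(\chi^2_1-1)$. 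You have in fact put your finger on a real subtlety rather than merely a defect of your own write-up: the paper's own MGF computation uses $t(C_2,G_n)\to t(C_2,W)$ in the normalizing denominator, which fails for the same reason, and the stated limit is only safe when the limiting graphon is $\{0,1\}$-valued (as in the $K_n$ and $K_{n/2,n/2}$ examples).
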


This theorem gives a characterization of the limiting distribution of the number of monochromatic edges for all converging sequence of dense graphs. As before, the main idea of the proof of Theorem \ref{th:chisquare} is moment comparison. However, in this case, the conditional central moments of $N(G_n)$ are compared with the conditional moments  of a random variable obtained by replacing the color at every vertex with an independent and appropriately chosen $c$-dimensional normal, which is then shown to converge in distribution to a weighted sum of independent centered chi-square $\chi^2_{(c-1)}$ random variables. Refer to Beran \cite{beran} for more about such distributions.

\subsection{Connections to Extremal Combinatorics} The combinatorial quantity that shows up in moment computations for the above theorems is $N(G, H)$, the number of isomorphic copies of a graph $H$ in another graph $G$. The quantity $N(\ell, H):=\sup_{G: |E(G)|=\ell}N(G, H)$ is a well-known object in extremal graph theory that was first studied by Erd\H os \cite{erdoscompletesubgraphs} and later by Alon \cite{alon81,alon86}. Alon \cite{alon81} showed that for any simple graph $H$ there exists a graph parameter $\gamma(H)$ such that $N(\ell, H)=\Theta(\ell^{\gamma(H)})$. Friedgut and Kahn \cite{hypergraphcopies} extended this result to hypergraphs and identified the exponent $\gamma(H)$ as the fractional stable number of the hypergraph $H$. Alon's result can be used to obtain a slightly more direct proof of Theorem \ref{th:poissonuniversal}. However, our estimates of $N(G, C_g)$ using the spectral properties of $G$ lead to a new and elementary proof of the following result of Alon \cite{alon81}:

\begin{thm}[Theorem B, Alon \cite{alon81}]If $H$ has a spanning subgraph which is a disjoint union of cycles and isolated edges, then
$$N(\ell, H)=(1+O(\ell^{-1/2}))\cdot \frac{1}{|Aut(H)|}\cdot (2\ell)^{|V(H)|/2},$$
where $|Aut(H)|$ denotes the number of automorphisms of $H$.
\label{th:alon_cycle}
\end{thm}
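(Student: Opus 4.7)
The plan is to establish matching upper and lower bounds on $N(\ell,H)$: an upper bound via a spectral estimate applied to the spanning subgraph $F$, and a lower bound via the explicit construction $G = K_m$ with $\binom{m}{2}$ close to $\ell$.

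For the upper bound, I would fix an arbitrary graph $G$ with $|E(G)| = \ell$ and adjacency matrix $A$, and write the spanning subgraph as a disjoint union of cycles $C_{g_1}, \ldots, C_{g_r}$ together with $s$ isolated edges, so that $\sum_{i=1}^{r} g_i + 2s = |V(H)|$. Because the vertex sets of $F$ and $H$ coincide and $E(F) \subseteq E(H)$, every injective homomorphism $H \to G$ restricts to an injective homomorphism $F \to G$, which gives $N(G,H)\,|\mathrm{Aut}(H)| \leq \#\{\text{injective homs } F \to G\}$. Dropping global injectivity in favour of per-component injectivity factorises the count across the components of $F$:
$$\#\{\text{injective homs } F \to G\} \;\leq\; \prod_{i=1}^{r} \mathrm{tr}(A^{g_i}) \cdot (2\ell)^{s},$$
using the standard identities $\#\{\text{homs } C_g \to G\} = \mathrm{tr}(A^g)$ and $\#\{\text{homs } K_2 \to G\} = 2\ell$. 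The spectral identity $\sum_{i}\lambda_i(A)^{2} = \mathrm{tr}(A^{2}) = 2\ell$ forces $|\lambda_i(A)| \leq \sqrt{2\ell}$ for every $i$, whence
$$\mathrm{tr}(A^{g}) \;\leq\; \sum_{i}|\lambda_i(A)|^{g} \;\leq\; \bigl(\max_{i}|\lambda_i(A)|\bigr)^{g-2} \cdot 2\ell \;\leq\; (2\ell)^{g/2}.$$
Multiplying across components and using $\sum_i g_i + 2s = |V(H)|$ telescopes to $N(G,H) \leq (2\ell)^{|V(H)|/2}/|\mathrm{Aut}(H)|$, with no error factor at all.

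For the lower bound I would pick $m$ with $\binom{m}{2} \leq \ell < \binom{m+1}{2}$, so that $m = \sqrt{2\ell}\,(1 + O(\ell^{-1/2}))$, and let $G_\ell$ be $K_m$ together with $\ell - \binom{m}{2} \leq m$ extra edges attached to fresh vertices; this graph has exactly $\ell$ edges. Since every edge is present in $K_m$, every injection $V(H) \hookrightarrow V(K_m)$ yields a distinct subgraph copy of $H$ inside $G_\ell$, giving
$$N(G_\ell, H) \;\geq\; \frac{m(m-1)\cdots(m-|V(H)|+1)}{|\mathrm{Aut}(H)|} \;=\; \frac{(1 + O(\ell^{-1/2}))\,(2\ell)^{|V(H)|/2}}{|\mathrm{Aut}(H)|}.$$
Combining the two bounds yields the theorem.

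The main obstacle is the per-cycle spectral estimate $\mathrm{tr}(A^{g}) \leq (2\ell)^{g/2}$; the hypothesis on $F$ is precisely what ensures that every component's homomorphism count is bounded by a power of $2\ell$ proportional to the component's vertex count (the analogous estimate fails for long paths, since $\mathrm{hom}(P_k,G) = \mathbf{1}^{\top}A^{k-1}\mathbf{1}$ carries an extra factor of $n$, which is why Alon's theorem is restricted to disjoint unions of cycles and edges). Both the bookkeeping for $\ell \neq \binom{m}{2}$ and the slack from dropping across-component injectivity dissolve into the $O(\ell^{-1/2})$ error term.
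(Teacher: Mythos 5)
Your proposal is correct, and while it shares its two load-bearing ingredients with the paper --- the spectral estimate $\mathrm{tr}(A^g)\le(2\ell)^{g/2}$ (the paper's Lemma \ref{cycle}) for the upper bound, and $K_m$ with $m\approx\sqrt{2\ell}$ padded to exactly $\ell$ edges for the lower bound --- the reduction from $H$ to its spanning subgraph $F$ is genuinely different. The paper passes through Alon's completion lemmas: every copy of $F$ in $G$ extends to a copy of $H$ in at most $x(H,F)$ ways, each copy of $H$ arises exactly $N(H,F)$ times, and the ratio $x(H,F)/N(H,F)$ is identified as $|Aut(F)|/|Aut(H)|$ by counting inside $K_{|V(H)|}$; this requires citing two lemmas from \cite{alon81} and a separate bound on $N(\ell,F)$ obtained by multiplying the component counts. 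You instead observe that injective homomorphisms $H\to G$ restrict injectively to injective homomorphisms $F\to G$ (same vertex map, fewer edge constraints), then relax to per-component homomorphism counts, so that $N(G,H)\,|Aut(H)|\le\prod_i \hom(F_i,G)$ and the automorphism factor appears on the correct side from the start. This buys you two things: the argument is self-contained (no appeal to the completion machinery), and it silently avoids a subtlety in the paper's display (\ref{eq:isolatededge_cycle}), where $\prod_i|Aut(F_i)|$ is equated with $|Aut(F)|$ --- an identity that fails when $F$ has repeated components and that the completion route must then absorb elsewhere. The closing remark about why paths are excluded (the extra factor of $n$ in $\mathbf{1}^{\top}A^{k-1}\mathbf{1}$) correctly identifies where the hypothesis on $F$ enters. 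The only cosmetic slip is the phrase ``every injection yields a distinct copy'' in the lower bound --- distinct injections coincide on copies in groups of size $|Aut(H)|$, which is exactly why you divide by it --- but the formula you write is the right one.
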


The above theorem calculates the exact asymptotic behavior of $N(\ell, H)$ for graphs $H$ that have a spanning subgraph consisting of a disjoint union of cycles and isolated edges. There are only a handful of graphs for which such exact asymptotics are known \cite{alon81,alon86}. Alon's proof in \cite{alon81} uses a series of combinatorial lemmas. We hope the short new proof presented in this paper is of independent interest.

The quantity $\gamma(H)$ is a well studied object in graph theory and discrete optimization and is related to the fractional stable set polytope \cite{schrijver}. While proving Theorems \ref{th:poissonuniversal} and \ref{th:normal}, we discover several interesting facts about the exponent $\gamma(H)$, which might be useful in graph theory as well. Alon \cite{alon86} showed that $\gamma(H)\leq |E(H)|$, and the equality holds if and only if $H$ is a disjoint union of stars. This is improved to $\gamma(H)\leq |V(H)|-\nu(H)$, where  $\nu(H)$ is the number of connected components of $H$ and the condition for equality remains the same. This is proved in Corollary \ref{cor:gammah} and used later to give an alternative proof of Theorem \ref{th:poissonuniversal}. In fact, the universality of the Poisson limit necessitates $\gamma(H)<|V(H)|-\nu(H)$ for all graphs with a cycle.

In a similar manner, the universal normal limit for $c\rightarrow\infty$ leads to the following interesting observation about $\gamma(H)$. Suppose $H$ has no isolated vertices: if $\gamma(H)> \frac{1}{2}|E(H)|$, then $H$ has a vertex of degree 1. This result is true for simple graphs as well as for multi graphs (with a similar definition of $\gamma$ for multi-graphs). This result is sharp, in the sense that there are simple graphs with no leaves such that $\gamma(H)=|E(H)|/2$. Even though this result follows easily from the
definition of $\gamma(H)$, it is a fortunate coincidence, as it is exactly what is needed in the proof of universal normality.

The role of cycle counts is crucial for the asymptotic normality when the number of colors is fixed. As the sequence $N(G_n)$ is uniformly integrable, the fourth moment condition and Theorem \ref{th:normal_fixed} imply the convergence of all other moments. In the language of graphs, this is equivalent to saying that the ACF4 condition implies that $N(G_n, C_g)=o(|E(G_n)|^{g/2})$, for all $g \geq 3$. This means that if number of 4-cycles in a graph is sub-extremal, then the counts of all other cycles are also sub-extremal. A combinatorial proof of this result and the similarities to results in pseudo-random graphs \cite{graham,conlonfoxzhao}, where the 4-cycle count plays a central role, are discussed in Section \ref{sec:4cycle}. 

\subsection{Other Monochromatic Subgraphs} The above theorems determine the universal asymptotic behavior of the number of monochromatic edges under independent and uniform coloring of the vertices. However, the situation 
for the number of other monochromatic subgraphs is quite different. Even under uniform coloring, the limit need not be a Poisson mixture. This is illustrated in the following proposition where we show that the number of monochromatic $r$-stars in a uniform coloring of an $n$-star converges to a polynomial in Poissons, which is not a Poisson mixture.
\begin{ppn}\label{propn:easy}
Let $G_n=K_{1,n}$, be the star graph with  $n+1$ vertices. Under the uniform coloring distribution, the random variable $T_{r, n}$ that counts the number of monochromatic $r$-stars in $G_n$ satisfies:
\[T_{r, n}\stackrel{\sD}{\rightarrow}\left\{
\begin{array}{ccc}
0  & \text{if}  &  \frac{n}{c}\rightarrow 0,\\
\infty & \text{if}  &  \frac{n}{c}\rightarrow \infty,\\
\frac{X(X-1)\cdots(X-r+1)}{r!}  & \text{if}  &  \frac{n}{c}\rightarrow \lambda,
\end{array}
\right.\]
where $X\sim Poisson(\lambda)$.
\end{ppn}
A few examples with other monochromatic subgraphs are also considered and several interesting observations are reported. We construct a graph $G_n$ where the number of monochromatic $g$-cycles ($g \geq 3$) in a uniform $c$-coloring of $G_n$ converges in distribution to a non-trivial mixture of Poisson variables even when $|N(G_n, C_g)|/c^{g-1}$ converges to a fixed number $\lambda$. This is in contrast to the situation for edges, where the number of monochromatic edges converges to $Poisson(\lambda)$ whenever $|E(G_n)|/c\rightarrow \lambda$. We believe that some sort of Poisson-mixture universality holds for cycles as well, that is, the number of monochromatic $g$-cycles in a uniform random coloring of any graph sequence $G_n$ converges in distribution to a random variable which is a mixture of Poisson, whenever $|N(G_n, C_g)|/c^{g-1}\rightarrow \lambda>0$.

\subsection{Literature Review on Non-Uniform Colorings}

A natural generalization of the uniform coloring distribution (\ref{eq:uniform}) is to consider a general coloring distribution $\vec p=(p_1, p_2, \ldots , p_c)$, that is, the probability a vertex is colored with color $a\in [c]$ is $p_a$ independent from the colors of the other vertices, where $p_a\ge 0$, and $\sum_{a=1}^c p_a=1$. Define $P_G(\vec p)$ to be the probability that $G$ is properly colored. $P_G(\vec p)$ is related to Stanley's generalized chromatic polynomial \cite{stanley}, and under the uniform coloring distribution it is precisely the proportion of proper $c$-colorings of
$G$. Recently, Fadnavis \cite{fadnavis} proved that $P_G(\vec p)$ is Schur-convex for every fixed $c$, whenever the graph $G$ is claw-free, that is, $G$ has no induced $K_{1,3}$. This implies that for claw-free graphs, the probability that it is properly colored is maximized under the uniform distribution, that is, $p_a=1/c$ for all $a\in [c]$. 

Poisson limit theorems for the number of monochromatic subgraphs in a random coloring of a graph sequence $G_n$ are applicable when the number of colors grows in an appropriate way compared to the number of certain specific subgraphs in $G_n$.  Arratia et al. \cite{arratia} used Stein's method based on dependency graphs to prove Poisson approximation theorems for the number of monochromatic cliques in a uniform coloring of a complete graph (see also Chatterjee et al. \cite{CDM}).  Poisson limit theorems for the number of general monochromatic subgraphs in a random coloring of a graph sequence are also given in Cerquetti and Fortini \cite{birthdayexchangeability}. They assumed that the distribution of colors was exchangeable and proved that the number of copies of any particular monochromatic subgraph converges in distribution to a mixture of Poissons. 

However, most of these results need conditions on the number of certain subgraphs in $G_n$ and the coloring distribution. Moreover, Poisson approximation holds only in the regime where the number of colors $c$ goes to infinity with $n$. Under the uniform coloring distribution, the random variables  in (\ref{eq:medges}) have more independence, and using the method of moments and estimates from extremal combinatorics, we show that nice universal limit theorems hold for the number of monochromatic edges. 

\subsection{Organization of the Paper}The rest of the paper is organized as follows: Section \ref{sec:mainproof} proves Theorem \ref{th:poissonuniversal} and Section \ref{sec:examplemain} illustrates it with various examples. Section \ref{sec:alon} discusses the connections with extremal combinatorics, fractional stable set polytope, and includes a new proof of Theorem \ref{th:alon_cycle}. The proofs of Theorem \ref{th:normal} and Theorem \ref{th:normal_fixed} are in Section \ref{sec:normal} and Section \ref{sec:normal_fixed}, respectively. The characterization of the limiting distribution for dense graphs (Theorem \ref{th:chisquare}) is detailed in Section \ref{sec:chisquare}. Finally, Section \ref{sec:starscycles} proves Proposition \ref{propn:easy}, considers other examples on counting monochromatic cycles, and discusses possible directions for future research. An appendix provides the details on conditional and unconditional convergence of random variables.

\section{Universal Poisson Approximation For Uniform Colorings: Proof of Theorem \ref{th:poissonuniversal}}
\label{sec:mainproof}

In this section, we determine the limiting behavior of $\P(N(G_n)=0)$ under minimal conditions. Using the method of moments, we show that $N(G_n)$ has a universal threshold, which depends only on the limiting behavior of $|E(G_n)|/c$, and a Poisson limit theorem holds at the threshold.

Let $G_n\in \sG_n$ be a random graph sampled according to some probability distribution. 
Recall the definition of $M(G_n)$ in (\ref{eq:independentbernoulli}). The proof of Theorem \ref{th:poissonuniversal} is given in two parts: The first part compares the conditional moments of $N(G_n)$ and $M(G_n)$, given the graphs $G_n$, showing that they are asymptotically close, when $|E(G_n)|/c\stackrel{\sD}\rightarrow Z$. The second part uses this result to complete the proof of Theorem \ref{th:poissonuniversal} using some technical properties of conditional convergence (see Lemma \ref{abc}).

\subsection{Computing and Comparing Moments}

This section is devoted to the computation of conditional moments of $N(G_n)$ and $M(G_n)$, and their comparison. To this end, define for any fixed number $k$, $A \lesssim_k B$ as $A\leq C(k) B$, where $C(k)$ is a constant that depends only on $k$. Let $G_n\in \sG_n$ be a random graph sampled according to some probability distribution. For any fixed subgraph $H$ of $G_n$, let $N(G_n, H)$ be the number of isomorphic copies of $H$ in $G_n$, that is,
$$N(G_n, H):=\sum_{S\subset E(G_n):|S|=|E(H)|}\pmb 1\{G_n[S]=H\},$$ where the sum is over subsets $S$ of $E(G_n)$ with $|S|=|E(H)|$, and $G_n[S]$ is the subgraph of $G_n$ induced by the edges of $S$.

\begin{lem}\label{third}Let $G_n\in \sG_n$ be a random graph sampled according to some probability distribution. For any $k \geq 1$, let $\cH_k$ be the collection of all graphs with at most $k$ edges and no isolated vertices. Then
\begin{eqnarray}
|\E(N(G_n)^k|G_n)-\E(M(G_n)^k|G_n)|&\lesssim_k&\sum_{\substack{H\in\cH_k,\\ H \text{ has a cycle}}}N(G_n, H)\cdot \frac{1}{c^{|V(H)|-\nu(H)}},
\label{eq:exp_diff}
\end{eqnarray}
where $\nu(H)$ is the number of connected components of $H$.
\label{lm:expectation_difference}
\end{lem}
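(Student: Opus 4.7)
My plan is to expand both conditional $k$-th moments as sums over ordered $k$-tuples of edges of $G_n$, and then compare the summands after grouping them by the simple graph they span. Given $G_n$,
\[
\E(N(G_n)^k|G_n) = \sum_{\vec e\in E(G_n)^k}\E\Big(\prod_{t=1}^k\pmb 1\{Y_{i_t}=Y_{j_t}\}\Big),\quad \E(M(G_n)^k|G_n)=\sum_{\vec e\in E(G_n)^k}\E\Big(\prod_{t=1}^k Z_{e_t}\Big),
\]
where $\vec e=(e_1,\ldots,e_k)$ with $e_t=\{i_t,j_t\}$. To each $\vec e$ I would associate the simple graph $H(\vec e)$ with vertex set $\bigcup_t\{i_t,j_t\}$ and edge set $\{e_1,\ldots,e_k\}$ (viewed as a set, ignoring multiplicities); this $H(\vec e)$ lies in $\cH_k$ since it has no isolated vertices and at most $k$ distinct edges.

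\textbf{Closed forms.} The next step is to evaluate each summand as a power of $1/c$ depending only on $H(\vec e)$. Because the $\{Z_e\}$ are independent $\dBer(1/c)$ and $Z_e^m=Z_e$ for all $m\ge 1$, the $M$-side summand equals $c^{-|E(H(\vec e))|}$. For the $N$-side, the product of indicators equals $1$ precisely when $Y$ is constant on each connected component of $H(\vec e)$, an event of probability $c^{\nu(H(\vec e))}/c^{|V(H(\vec e))|}=c^{-(|V(H(\vec e))|-\nu(H(\vec e)))}$. Writing $H=H(\vec e)$, the per-tuple difference is
\[
c^{-(|V(H)|-\nu(H))}-c^{-|E(H)|}.
\]
Since every connected graph on $v$ vertices has at least $v-1$ edges, $|E(H)|\ge|V(H)|-\nu(H)$ with equality exactly when $H$ is a forest. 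Hence the difference vanishes on forest tuples and is bounded by $c^{-(|V(H)|-\nu(H))}$ whenever $H$ contains a cycle; this is the key cancellation that restricts the error to tuples whose spanning graph is cyclic.

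\textbf{Counting and finishing.} Finally I would group the cyclic tuples by the isomorphism class of $H(\vec e)$. For each $H\in\cH_k$ with a cycle, and each of the $N(G_n,H)$ copies of $H$ inside $G_n$, the tuples $\vec e$ whose spanning graph is exactly that copy are in bijection with surjections $[k]\twoheadrightarrow E(H)$, of which there are at most $|E(H)|^k\le k^k$. Summing the per-tuple bound $c^{-(|V(H)|-\nu(H))}$ over such tuples and then over $H$ gives
\[
|\E(N(G_n)^k|G_n)-\E(M(G_n)^k|G_n)|\;\le\;k^k\sum_{\substack{H\in\cH_k\\ H\text{ has a cycle}}} N(G_n,H)\cdot c^{-(|V(H)|-\nu(H))},
\]
which is the claimed inequality with $C(k)=k^k$. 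There is no real obstacle: the argument is a routine moment expansion, and the only substantive ingredients are the closed-form identity for the $N$-side expectation, the elementary fact that $|E(H)|$ and $|V(H)|-\nu(H)$ agree exactly on forests (so that the $Z$-model and the $Y$-model match term-by-term on all forest tuples), and the trivial surjection count that absorbs into a $k$-dependent constant.
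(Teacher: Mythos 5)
Your proposal is correct and follows essentially the same route as the paper's proof: a multinomial expansion of both conditional moments over $k$-tuples of edges, the closed-form evaluations $c^{-(|V(H)|-\nu(H))}$ and $c^{-|E(H)|}$ for the two models, and the observation that these agree exactly when the spanned graph is a forest. The only difference is that you make the constant $C(k)=k^k$ explicit via the surjection count, which the paper absorbs silently into the $\lesssim_k$ notation.
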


\begin{proof}Using the multinomial expansion and the definition of $\cH_k$,
\begin{eqnarray}
\E(N(G_n)^k|G_n)&=&\sum_{(i_1, j_1)\in E(G_n)}\sum_{(i_2, j_2)\in E(G_n)}\cdots\sum_{(i_k, j_k)\in E(G_n)}\E\left(\prod_{r=1}^k\pmb1\{Y_{i_r}=Y_{j_r}\}\Big|G_n\right)
\label{eq:nk}
\end{eqnarray}
Similarly,
\begin{eqnarray}
\E(M(G_n)^k|G_n)=\sum_{(i_1, j_1)\in E(G_n)}\sum_{(i_2, j_2)\in E(G_n)}\cdots\sum_{(i_k, j_k)\in E(G_n)}\E\left(\prod_{r=1}^k Z_{i_rj_r}\right),
\label{eq:mk}
\end{eqnarray}
If $H$ is the simple subgraph of $G_n$ induced by the edges $(i_1, j_1), (i_2, j_2), \ldots (i_k, j_k)$. Then
$$\E\left(\prod_{r=1}^k\pmb1\{Y_{i_r}=Y_{j_r}\}\Big|G_n\right)=\frac{1}{c^{|V(H)|-\nu(H)}}\text{ and } \E\left(\prod_{r=1}^k Z_{i_rj_r}\right)=\frac{1}{c^{|E(H)|}}.$$
The result now follows by taking the difference of (\ref{eq:nk}) and (\ref{eq:mk}), and noting that in any graph $H$, $|E(H)|\geq |V(H)|-\nu(H)$ and equality holds if and only if $H$ is a forest.
\end{proof}


Lemma \ref{lm:expectation_difference} shows that bounding the difference of the conditional moments of $N(G_n)$ and $M(G_n)$ entails bounding $N(G_n, H)$, for all graphs $H$ with a cycle. The next lemma estimates the number of copies of a cycle $C_g$ in $G_n$.

\begin{lem}\label{cycle}
For $g\ge 3$ and $G_n\in \sG_n$ let $N(G_n, C_g)$ be the number of $g$-cycles in $G_n$. Then $$N(G_n, C_g)\le \frac{1}{2g}\cdot(2|E(G_n)|)^{g/2}.$$
\end{lem}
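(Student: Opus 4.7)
The plan is to bound $N(G_n, C_g)$ by relating $g$-cycles to closed walks of length $g$ in $G_n$ and then applying a spectral/$\ell^p$-norm estimate to the trace of a power of the adjacency matrix. Let $A=A(G_n)$ with eigenvalues $\lambda_1,\ldots,\lambda_n$. The standard fact that $(A^g)_{ii}$ equals the number of closed walks of length $g$ starting and ending at $i$ gives $\operatorname{tr}(A^g)=\sum_i\lambda_i^g$ equal to the total count of closed walks of length $g$ in $G_n$.

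Next I would relate this count to $N(G_n,C_g)$. Every $g$-cycle in $G_n$ generates exactly $2g$ distinct closed walks of length $g$ (there are $g$ possible starting vertices and $2$ orientations), and these walks are all different from closed walks arising from other cycles or from degenerate walks (walks that revisit vertices, backtrack, etc.). Since all the extra closed walks contribute non-negatively, I get
\begin{equation*}
2g\cdot N(G_n,C_g)\le \operatorname{tr}(A^g)=\sum_{i=1}^n\lambda_i^g\le \sum_{i=1}^n|\lambda_i|^g.
\end{equation*}

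Now I would apply monotonicity of the $\ell^p$-norms of the eigenvalue vector. For $p\ge q\ge 1$ one has $\|x\|_p\le\|x\|_q$, so taking $p=g\ge 2$ and $q=2$ yields $\bigl(\sum_i|\lambda_i|^g\bigr)^{1/g}\le \bigl(\sum_i\lambda_i^2\bigr)^{1/2}$, that is,
\begin{equation*}
\sum_{i=1}^n|\lambda_i|^g\le \Bigl(\sum_{i=1}^n\lambda_i^2\Bigr)^{g/2}.
\end{equation*}
The handshake lemma combined with $\sum_i\lambda_i^2=\operatorname{tr}(A^2)=\sum_{i,j}A_{ij}^2$ gives $\sum_i\lambda_i^2=2|E(G_n)|$, and plugging in produces $2g\cdot N(G_n,C_g)\le (2|E(G_n)|)^{g/2}$, which is the claim.

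There is no real obstacle here: the only point that requires a little care is the overcount factor $2g$ and the direction of the inequality $\operatorname{tr}(A^g)\le \sum_i|\lambda_i|^g$, which is trivial for even $g$ (equality) and for odd $g$ simply reflects that some eigenvalues may be negative. Everything else is a standard application of the spectral identity $\operatorname{tr}(A^g)=\sum_i\lambda_i^g$ and monotonicity of $\ell^p$-norms.
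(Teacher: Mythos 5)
Your proposal is correct and follows essentially the same route as the paper: bound $2g\,N(G_n,C_g)$ by $\operatorname{tr}(A^g)=\sum_i\lambda_i^g$ via the closed-walk count, then control the $g$-th power sum of eigenvalues by $(\sum_i\lambda_i^2)^{g/2}=(2|E(G_n)|)^{g/2}$. The only cosmetic difference is that you invoke $\ell^p$-norm monotonicity directly, whereas the paper factors through $\|\vec\lambda\|_\infty^{g-2}\sum_i\lambda_i^2$; these are the same estimate.
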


\begin{proof}Let $A:=A(G_n)$ be the adjacency matrix of $G_n$. Note that $\sum_{i=1}^n\lambda_i^2(G_n)=\tr(A^2)=2|E(G_n)|$, where $\vec\lambda(G_n)=(\lambda_1(G_n), \cdots \lambda_n(G_n))'$ is the vector of eigenvalues of $A(G_n)$. Note that $\tr(A^g)$ counts the number of walks of length $g$ in $G_n$, and so each cycle in $G_n$ is counted $2g$ times. Thus, for any $g\geq 3$, 
$$N(G_n, C_g)\le \frac{1}{2g}\cdot\tr(A^g)=\frac{1}{2g}\cdot\sum_{i=1}^n\lambda_i^g(G_n)
\leq\frac{1}{2g}\cdot ||\vec\lambda(G_n)||_\infty^{g-2}\sum_{i=1}^n\lambda_i^2(G_n)\leq \frac{1}{2g}\cdot(2|E(G_n)|)^{g/2},$$
where the last step uses $||\vec\lambda(G_n)||_\infty^{g-2}\leq (2|E(G_n)|)^{g/2-1}$.
\end{proof}


For $a, b\in \R$, $a\lesssim b$, $a\gtrsim b$, and $a\asymp b$ means $a\leq C_1 b$, $a\geq C_2 b$ and
$C_2 b\leq a\leq C_1 b$ for some constants $C_1, C_2\geq 0$, respectively. For a given simple graph $H$, the notation $A \lesssim_H B$ will mean $A\leq C(H)\cdot B$, where $C(H)$ is a constant that depends only on $H$. The following lemma gives a bound on $N(G_n, H)$ in terms of $|E(G_n)|$ for arbitrary subgraphs $H$ of $G_n$.

\begin{lem}\label{fourth}For any  fixed connected subgraph $H$,  let $N(G_n, H)$ be the set of copies of $H$ in $G_n$. Then
\begin{equation}
N(G_n, H)\lesssim_H |E(G_n)|^{|V(H)|-1}.
\label{eq:subgraphcount}
\end{equation}
 Furthermore, if $H$ has a cycle of length $g\geq 3$, then
\begin{equation}
N(G_n, H)\lesssim_H |E(G_n)|^{|V(H)|-g/2}.
\label{eq:subgraphcountcycle}
\end{equation}
\label{lm:count}
\end{lem}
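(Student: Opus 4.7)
The plan is to reduce both bounds to a greedy-embedding count applied to a suitable spanning subgraph $S$ of $H$. The key preliminary observation is that whenever $S$ is a spanning subgraph of $H$ (so $V(S) = V(H)$ and $E(S) \subseteq E(H)$), every labeled copy of $H$ in $G_n$ is automatically a labeled copy of $S$; dividing by the automorphism factors, one gets $N(G_n, H) \leq (|\mathrm{Aut}(S)|/|\mathrm{Aut}(H)|) \cdot N(G_n, S) \lesssim_H N(G_n, S)$. So it suffices to choose a convenient $S$ for each of the two bounds and estimate $N(G_n, S)$.

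For the first bound, take $S = T$ to be any spanning tree of $H$, which exists because $H$ is connected. Order the vertices of $T$ as $v_1, v_2, \ldots, v_{|V(H)|}$ so that $(v_1, v_2) \in E(T)$ and each subsequent $v_i$ is adjacent in $T$ to at least one earlier vertex. To bound the number of labeled copies of $T$ in $G_n$, first pick the image of the edge $(v_1, v_2)$ in at most $2|E(G_n)|$ ways, and then for each $i \geq 3$ pick the image of $v_i$ among the neighbors of some already-embedded vertex, contributing at most $d_{\max}(G_n)$ choices at each step. Since $d_{\max}(G_n) \leq |E(G_n)|$, this gives $N(G_n, T) \lesssim_H |E(G_n)| \cdot |E(G_n)|^{|V(H)|-2} = |E(G_n)|^{|V(H)|-1}$.

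For the second bound, pick a $g$-cycle $C$ in $H$ and take a spanning tree $T$ of $H$ containing $g - 1$ consecutive edges of $C$; set $S = T \cup \{e\}$, where $e$ is the remaining edge of $C$. Then $S$ is a spanning subgraph of $H$ consisting of the full cycle $C$ together with the remaining $|V(H)| - g$ vertices attached to $C$ by a forest. Count labeled copies of $S$ in two stages: first embed the cycle $C$, which by the proof of Lemma \ref{cycle} contributes at most $\mathrm{tr}(A(G_n)^g) \leq (2|E(G_n)|)^{g/2}$ labeled copies; then greedily extend to the remaining $|V(H)| - g$ vertices in BFS order from $V(C)$, each new vertex contributing at most $d_{\max}(G_n) \leq |E(G_n)|$ choices. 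Multiplying gives $(2|E(G_n)|)^{g/2} \cdot |E(G_n)|^{|V(H)|-g} \lesssim_H |E(G_n)|^{|V(H)|-g/2}$, which is the asserted bound.

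No step looks to be a serious obstacle: the only non-elementary input is the cycle estimate of Lemma \ref{cycle}, which is already in hand, and the existence of a spanning tree of $H$ containing $g-1$ edges of $C$ is immediate (after removing one edge, the cycle becomes a path on $g$ vertices, which extends to a spanning tree of the connected graph $H$). All hidden constants absorb the automorphism-group ratios and the BFS ordering, both of which depend only on $H$.
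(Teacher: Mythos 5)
Your proof is correct and follows essentially the same route as the paper's: anchor the copy of $H$ on a single edge for the first bound (resp.\ on a $g$-cycle, controlled by Lemma \ref{cycle}, for the second) and then pay a factor $O(|E(G_n)|)$ for each remaining vertex. The only cosmetic difference is that you organize the extension step as a greedy spanning-tree embedding using $d_{\max}(G_n)\le |E(G_n)|$, whereas the paper simply chooses the remaining $|V(H)|-2$ (resp.\ $|V(H)|-g$) vertices arbitrarily among the at most $2|E(G_n)|$ non-isolated vertices of $G_n$; both give the same exponent.
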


\begin{proof}The first bound on $N(G_n, H)$ can be obtained by a crude counting argument as follows: First choose an edge of $G_n$ in $E(G_n)$ which fixes 2 vertices of $H$. Then  the remaining $|V(H)|-2$ vertices are chosen arbitrarily from the set of allowed $V(G_n)$ vertices, giving the bound $$
N(G_n, H)\lesssim_H |E(G_n)|{|V(G_n)|\choose |V(H)|-2}\leq  |E(G_n)|{2|E(G_n)|\choose |V(H)|-2}\lesssim_H |E(G_n)|^{|V(H)|-1},$$  where we have used the fact that the number of graphs on $|V(H)|$ vertices is at most  $2^{{|V(H)|\choose 2}}$.

Next, suppose that $H$ has a cycle of length $g\geq 3$. Choosing a cycle of length $g$ arbitrarily from $G_n$, there are $|V(G_n)|$ vertices from which the remaining $|V(H)|-g$ vertices are chosen arbitrarily. Since the edges among these vertices are also chosen arbitrarily, the following crude upper bound holds
\begin{eqnarray}
N(G_n, H)\lesssim_H N(G_n, C_g){{2|E(G_n)|}\choose{|V(H)|-g}}&\lesssim_H &N(G_n, C_g)|E(G_n)|^{|V(H)|-g}\nonumber\\
&\lesssim_H &|E(G_n)|^{|V(H)|-g/2}.
\label{eq:cyclecalc}
\end{eqnarray}
where the last step uses Lemma \ref{cycle}.
\end{proof}

The above lemmas, imply the most central result of this section: the conditional moments of $M(G_n)$ and $N(G_n)$ are asymptotically close, whenever $|E(G_n)|/c\stackrel{\sD}{\rightarrow} Z$.

\begin{lem}Let $M(G_n)$ and $N(G_n)$ be as defined in (\ref{eq:medges}) and (\ref{eq:independentbernoulli}), with  $|E(G_n)|/c\stackrel{\sD}{\rightarrow} Z$, then for every fixed $k\geq 1$, $$|\E (N(G_n)^k|G_n)-\E (M(G_n)^k|G_n)|\stackrel{\sP}{\rightarrow}0.$$
\label{lm:difference_zero}
\end{lem}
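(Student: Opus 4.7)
The plan is to combine the moment-difference bound from Lemma \ref{lm:expectation_difference} with the subgraph count bounds from Lemma \ref{lm:count} to show that each term in the sum on the right-hand side of (\ref{eq:exp_diff}) tends to zero in probability. Since the sum is finite (the collection $\cH_k$ is a fixed finite set of graphs once $k$ is fixed), it suffices to prove that for every $H \in \cH_k$ with a cycle,
\[
\frac{N(G_n, H)}{c^{|V(H)|-\nu(H)}} \stackrel{\sP}{\rightarrow} 0.
\]

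To bound $N(G_n, H)$ I would decompose $H$ into its connected components $H_1, H_2, \ldots, H_{\nu(H)}$ and use the trivial observation that $N(G_n, H) \leq C(H)\cdot \prod_{i=1}^{\nu(H)} N(G_n, H_i)$, since any copy of $H$ in $G_n$ yields a tuple of copies of the $H_i$'s through its unique decomposition into components. For each tree component, Lemma \ref{lm:count}, equation (\ref{eq:subgraphcount}), gives $N(G_n, H_i) \lesssim_{H_i} |E(G_n)|^{|V(H_i)|-1}$, while for each component containing a cycle of length $g_i \geq 3$, equation (\ref{eq:subgraphcountcycle}) gives $N(G_n, H_i) \lesssim_{H_i} |E(G_n)|^{|V(H_i)|-g_i/2}$. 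Because $H$ has a cycle, at least one component contributes the cycle bound, giving a saving of at least $g_i/2 - 1 \geq 1/2$ in the exponent compared with the tree bound. Summing the component exponents,
\[
N(G_n, H) \lesssim_H |E(G_n)|^{|V(H)|-\nu(H)-1/2}.
\]

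Dividing by $c^{|V(H)|-\nu(H)}$ and writing $m := |V(H)|-\nu(H) \geq 2$ I obtain
\[
\frac{N(G_n, H)}{c^{m}} \lesssim_H \left(\frac{|E(G_n)|}{c}\right)^{m-1/2} \cdot c^{-1/2}.
\]
The hypothesis $|E(G_n)|/c \stackrel{\sD}{\rightarrow} Z$ together with the continuous mapping theorem shows that $(|E(G_n)|/c)^{m-1/2}$ is tight (converges in distribution to $Z^{m-1/2}$), and since $c = c(n) \to \infty$ the factor $c^{-1/2}$ is deterministic and tends to zero. A tight sequence times a deterministic null sequence converges to zero in probability, and summing over the finitely many relevant $H \in \cH_k$ completes the proof.

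I expect the only slightly delicate point to be the factorization $N(G_n, H) \leq C(H) \prod_i N(G_n, H_i)$ over components (one has to check it carefully against the definition of $N(G_n, H)$ as a sum over edge-subsets), and then the bookkeeping that extracts the crucial $1/2$ saving in the exponent by the fact that at least one component must contain a cycle of length at least three. Once that accounting is correct, the conclusion is immediate from $c \to \infty$ and the tightness of $|E(G_n)|/c$.
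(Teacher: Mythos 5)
Your proposal is correct and follows essentially the same route as the paper: reduce via Lemma \ref{lm:expectation_difference} to showing $N(G_n,H)=o_P(c^{|V(H)|-\nu(H)})$ for each cyclic $H\in\cH_k$, bound $N(G_n,H)$ by the product of component counts using Lemma \ref{lm:count} so that the cycle contributes a saving of at least $1/2$ in the exponent, and conclude from the tightness of $|E(G_n)|/c$ together with $c\to\infty$. The only cosmetic difference is that the paper applies the cycle bound to a single component realizing the girth and the tree bound to all the rest, whereas you apply the cycle bound to every cyclic component; both yield the needed estimate.
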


\begin{proof}By Lemma \ref{third}
$$|\E(N(G_n)^k|G_n)-\E(M(G_n)^k|G_n)|\leq \sum_{\substack{H\in\cH_k,\\ H \text{ has a cycle}}}N(G_n, H)\cdot \frac{1}{c^{|V(H)|-\nu(H)}},$$
where $\nu(H)$ is the number of connected components of $H$. As the sum over $H\in\cH_k$ is a finite sum, it suffices to show that for a given $H\in \cH_k$ with a cycle $N(G_n, H)=o_P(c^{|V(H)|-\nu(H)}).$

To this end, fix $H\in \cH_k$ and let $H_1, H_2, \ldots, H_{\nu(H)}$ be the connected components of $H$. Without loss of generality, suppose the girth of $G$, $g(G)=g(H_1)=g \geq 3$. Lemma \ref{lm:count} then implies that
\begin{eqnarray}
N(G_n, H)\le \prod_{i=1}^{\nu(H)}|N(G_n, H_i)\lesssim_H & |E(G_n)|^{|V(H_1)|-g/2}\prod_{i=2}^{\nu(H)}E(G_n)^{V(H_i)-1}\nonumber\\
\lesssim_H &|E(G_n)|^{|V(H)|-\nu(H)+1-g/2}
\label{eq:exp_cycle}
\end{eqnarray}
which is $o_p(c^{|V(H)|-\nu(H)})$ since $g/2-1>0$.
\end{proof}


\subsection{Completing the Proof of Theorem \ref{th:birthdayuniformuniversal}}
The results from the previous section are used here to complete the proof of Theorem \ref{th:birthdayuniformuniversal}. The three different regimes of $|E(G_n)|/c$ are treated separately as follows:

\subsubsection{$\frac{1}{c}\cdot |E(G_n)|\stackrel{\sP}{\rightarrow} 0$}

 In this case, $\P(N(G_n)>0|G_n)\leq \E(N(G_n)|G_n)=|E(G_n)|/c\stackrel{\sP}{\rightarrow} 0$, and the result follows. 

\subsubsection{$\frac{1}{c}\cdot |E(G_n)|\stackrel{\sP}{\rightarrow} \infty$} 

Since $\E(N(G_n)^2|G_n)=\frac{|E(G_n)|^2}{c^2}+\frac{|E(G_n)|}{c}$, in this case $$\frac{\E(N(G_n)^2|G_n)}{\E(N(G_n)|G_n)^2}\stackrel{\sP}{\rightarrow} 1.$$ This implies that  $N(G_n)/\E(N(G_n)|G_n)$ converges in probability to 1, and so
$N(G_n)$ converges to $\infty$ in probability, as $\E(N(G_n)|G_n)=\frac{1}{c}\cdot |E(G_n)|\stackrel{\sP}\rightarrow \infty$.

\subsubsection{$\frac{1}{c}\cdot |E(G_n)|\stackrel{\sD}{\rightarrow} Z$, where $Z$ is some random variable} \label{sec:proofcompletetechnical}

In this regime the limiting distribution of $N(G_n)$ is a mixture of Poisson. As the Poisson distribution can be uniquely identified by moments, from Lemma \ref{lm:difference_zero} it follows that
conditional on $\{|E(G_n)|/c\rightarrow\lambda\}$, $N(G_n)$ converges to $Poisson(\lambda)$ for every $\lambda>0$. However, this does not immediately imply the unconditional convergence of $N(G_n)$ to a mixture of Poisson. In fact, a technical result, detailed in  Lemma \ref{abc}, and convergence of $M(G_n)$ to a Poisson mixture is necessary to complete the proof.

To begin with, recall that a random variable $X$ is a mixture of Poisson with mean $Z$, to be denoted as $Poisson(Z)$, if there exists a non-negative random variable $Z$ such that $$\P(X=k)=\E\left(\frac{1}{k!} e^{-Z}Z ^k\right).$$
The following lemma shows that $M(G_n)$ converges to $Poisson (Z)$ and satisfies the technical condition needed to apply Lemma \ref{abc}.

\begin{lem}
\label{lm:obvious}
Let $M(G_n)$ be as defined in (\ref{eq:independentbernoulli}) and $\frac{1}{c}\cdot |E(G_n)|\stackrel{\sD}{\rightarrow} Z$. Then $M(G_n)\stackrel{\sD}{\rightarrow}Poisson(Z)$, and further for any $\varepsilon>0,t\in \R$,
\begin{align*}
\limsup_{k\rightarrow\infty}\limsup_{n\rightarrow \infty}\P\left(\left|\frac{t^k}{k!}\E(M(G_n)^k|G_n)\right|>\varepsilon\right)=0.
\end{align*}
\end{lem}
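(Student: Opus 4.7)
The plan is to exploit the fact that, conditionally on $G_n$, the variable $M(G_n)=\sum_{(i,j)\in E(G_n)}Z_{ij}$ is a sum of $|E(G_n)|$ i.i.d.\ $\dBer(1/c)$ variables, hence binomially distributed with parameters $(|E(G_n)|,1/c)$. The two claims then follow from the classical binomial-to-Poisson approximation together with an elementary moment bound. Throughout I write $L_n:=|E(G_n)|/c$, which is tight since $L_n\stackrel{\sD}{\rightarrow}Z$ by hypothesis.

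For the distributional convergence, I compute the conditional characteristic function
\[
\varphi_n(t):=\E[e^{itM(G_n)}\mid G_n]=\left(1+\frac{e^{it}-1}{c}\right)^{|E(G_n)|}.
\]
The elementary inequality $|1+p(e^{it}-1)|\le 1$ for $p\in[0,1]$ gives $|\varphi_n(t)|\le 1$, so bounded convergence will apply at the outer level. Taking the principal logarithm and expanding, $\log\varphi_n(t)=L_n(e^{it}-1)+O(L_n/c)$, and since $L_n$ is tight while $1/c\to 0$, the random remainder vanishes in probability; hence $\varphi_n(t)\stackrel{\sP}{\rightarrow}\exp((e^{it}-1)Z)$ by the continuous mapping theorem. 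Bounded convergence then upgrades this to $\E[e^{itM(G_n)}]=\E\varphi_n(t)\to\E\exp((e^{it}-1)Z)$, which is the characteristic function of $Poisson(Z)$, and L\'evy's continuity theorem delivers $M(G_n)\stackrel{\sD}{\rightarrow}Poisson(Z)$.

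For the moment-tail estimate, I invoke the factorial-moment expansion of the binomial,
\[
\E[M(G_n)^k\mid G_n]=\sum_{j=1}^{k}S(k,j)\,|E(G_n)|(|E(G_n)|-1)\cdots(|E(G_n)|-j+1)\,c^{-j}\le\sum_{j=1}^{k}S(k,j)\,L_n^{j}=B_k(L_n),
\]
where $S(k,j)$ are Stirling numbers of the second kind and $B_k$ is the $k$-th Bell (Touchard) polynomial. Tightness of $L_n$ yields, for each $\delta>0$, a constant $K$ with $\limsup_n\P(L_n>K)<\delta$; on the event $\{L_n\le K\}$ the quantity $\frac{|t|^k}{k!}\E[M(G_n)^k\mid G_n]$ is deterministically bounded by $a_k(K):=|t|^kB_k(K)/k!$, and the entire generating function identity $\sum_{k\ge 0}B_k(K)z^k/k!=\exp(K(e^z-1))$ forces $a_k(K)\to 0$ as $k\to\infty$. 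Splitting on $\{L_n>K\}$ and its complement and taking $\limsup_n$, then $\limsup_k$, and finally $\delta\downarrow 0$ in that order drives the probability to zero. The only genuinely delicate point is that the remainder in the log-expansion is a random quantity; its vanishing in probability is immediate from tightness of $L_n$ and the deterministic decay of $1/c$, so no combinatorial input from the graph is required (the summands of $M(G_n)$ are independent by construction, in contrast to the analogous but harder estimate for $N(G_n)$).
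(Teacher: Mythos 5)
Your proof is correct and follows essentially the same route as the paper: the identical conditional characteristic function computation for the Poisson-mixture limit, and the same Stirling-number/factorial-moment identity $\E(M(G_n)^k\mid G_n)=\sum_{j}S(k,j)(|E(G_n)|)_jc^{-j}$ for the tail estimate. The only difference is cosmetic: where the paper passes to the weak limit $\sum_{j}S(k,j)Z^j$ and applies Fatou's lemma, you bound the conditional moment deterministically by $B_k(K)$ on a high-probability tightness event, which is if anything slightly cleaner.
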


\begin{proof}
For any $t\in \mathbb R$, $\E e^{itM(G_n)}=\E \E (e^{itM(G_n)}|G_n)=\E \left(1-\frac{1}{c}+\frac{e^{it}}{c}\right)^{|E(G_n)|}=\E R_n$,
where $R_n:=\left(1-\frac{1}{c}+\frac{e^{it}}{c}\right)^{|E(G_n)|}$ satisfies $|R_n|\le 1$. Since
$$\log R_n=|E(G_n)|\log \left(1-\frac{1}{c}+\frac{e^{it}}{c}\right)= |E(G_n)|\left(\frac{e^{it}-1}{c}+O\left(\frac{1}{c^2}\right)\right)\stackrel{\sD}{\rightarrow}(e^{it}-1)Z,$$
by the dominated convergence theorem
$\E e^{itM(G_n)}=\E R_n\rightarrow \E e^{(e^{it}-1)Z}$, which can be easily checked to be the generating function of a random variable with distribution $Poisson(Z)$. Thus, it follows that $M(G_n)\stackrel{\sD}{\rightarrow} Poisson(Z)$.

Proceeding to check the second conclusion, recall the standard identity $z^k=\sum_{j=0}^k S(k, j)(z)_j$, where $S(\cdot,\cdot)$ are Stirling numbers of the second kind and $(z)_j=z(z-1)\cdots(z-j+1)$. In the above identity, setting $z=M(G_n)$, taking expectation on both sides conditional on $G_n$, and using the formula for the Binomial factorial moments,
\begin{align*}
\E (M(G_n)^k|G_n)=\sum_{j=0}^k S(k,j)(|E(G_n)|)_jc^{-j}.
\end{align*}
The right hand side converges weakly to $\sum_{j=0}^k S(k,j) Z^j$. This is the $k$-th mean of a Poisson random variable with parameter $Z$. Using the formula for the Poisson moment generating function, for any $Z\ge 0$  and any $t\in \R$ we have
$$\sum_{k=0}^\infty \frac{t^k}{k!}\sum_{j=0}^k S(k,j)Z^j=e^{Z(e^t-1)}<\infty \implies
\frac{t^k}{k!}\sum_{j=0}^k S(k,j) Z^j\stackrel{a.s.}{\rightarrow}0,$$ as $k\rightarrow\infty$. Thus, applying Fatou's Lemma 
\begin{align*}
\limsup_{n\rightarrow \infty}\P\left(\Big|\frac{t^k}{k!}\E(M(G_n)^k|G_n)\Big|>\varepsilon\right)\le \P\left(\Big|\frac{t^k}{k!}\sum_{r=0}^k S(k,r)Z^r\Big|>\varepsilon\right).
\end{align*}
The lemma now follows by taking limit as $k\rightarrow \infty$ on both sides.
\end{proof}

Now, take $X_n=M(G_n)$ and $Y_n=N(G_n)$, and observe that (\ref{eq:abc1}) and (\ref{eq:abc2}) hold by Lemma \ref{lm:difference_zero} and Lemma \ref{lm:obvious}, respectively. As $M(G_n)$ converges to $Poisson(Z)$, this implies that $N(G_n)$ converges to $Poisson(Z)$, and the proof of Theorem \ref{th:birthdayuniformuniversal} is completed.


\begin{remark}Theorem \ref{th:birthdayuniformuniversal} shows that the limiting distribution of the number of monochromatic edges converges to a Poisson mixture. In fact, Poisson mixtures arise quite naturally in several contexts. It is known that the Negative Binomial distribution is distributed as $Poisson(Z)$, where $Z$ is a Gamma random variable with integer values for the shape parameter.  Greenwood and Yule \cite{greenwoodyule} showed that certain empirical distributions of accidents are well-approximated by a Poisson mixture. Le-Cam and Traxler \cite{lecam} proved asymptotic properties of random variables distributed as a mixture of Poisson. Poisson mixtures are widely used in modeling count panel data (refer to the recent paper of Burda et al. \cite{poissonmixturediscretechoice} and the references therein),  and have appeared in other applied problems as well \cite{poissonmixture}.
\end{remark}

\section{Examples: Applications of Theorem \ref{th:birthdayuniformuniversal}}
\label{sec:examplemain}

In this section we apply Theorem \ref{th:birthdayuniformuniversal} to different deterministic and random graph models, and determine the specific nature of the limiting Poisson distribution.

	
\begin{example}(Birthday Problem)
When the underlying graph $G$ is the complete graph $K_n$ on $n$ vertices, the above coloring problem reduces to the well-known birthday problem. By replacing the $c$ colors by birthdays, each occurring with probability $1/c$, the birthday problem can be seen as coloring the vertices of a complete graph independently with $c$ colors. The event that two people share the same birthday is the event of having a monochromatic edge in the colored graph. In birthday terms, $\P(N(K_n)=0)$ is precisely the probability that no two people have the same birthday.
Theorem \ref{th:birthdayuniformuniversal} says that under the uniform coloring for the complete graph
$\mathbb{P}(N(K_n)=0)\approx e^{-n^2/2c}$. Therefore, the maximum $n$ for which $\P(N(K_n)=0)\leq 1/2$ is approximately 23, whenever $c=365$. This reconstructs the classical birthday problem which can also be easily proved by elementary calculations. For a detailed discussion on the birthday problem and its various generalizations and applications, refer to \cite{aldous,barbourholstjanson,dasguptasurvey,diaconisholmes,diaconismosteller} and the references therein.
\end{example}

\begin{example}(Birthday Coincidences in the US Population) Consider the following question: What is the chance that there are two people in the United States who (a) know each other, (b) have the same birthday,  (c) their fathers have the same birthday,  (d) their grandfathers have the same birthday, and (e) their great grandfathers have the same birthdays. We will argue that this seemingly impossible coincidence actually happens with almost absolute certainty.

The population of the US is about $n$=400 million and it is claimed that a typical person knows about 600 people \cite{gelman,killworthbernard}. If the network $G_n$ of `who knows who' is modeled as an Erd\H os-Renyi graph, this gives $p=150 \times 10^{-8}$ and $\E(|E(G_n)|)=300\times 4\times 10^8=1.2\times 10^{11}$. The 4-fold birthday coincidence amounts to $c=(365)^4$ `colors' and $\lambda=\E(N(G_n))=\E(|E(G_n)|)/c\approx 6.76$, and using
the bound $\P(N(G_n)>0)\geq \frac{\E(N(G_n))^2}{\E(N(G_n)^2)}$, the probability of a match is at least $1-\frac{1}{\lambda}=85\%$. Note that this bound only uses an estimate on the number of edges in the graph. Moreover, assuming the Poisson approximation, the chance of a match is approximately $1-e^{-\lambda}\approx 99.8\%$, which means that almost surely there are two friends in the US who have a 4-fold birthday match among their ancestors.

Going back one more generation, we now calculate the probability that there are two friends who have a 5-fold birthday coincidence between their respective ancestors. This amounts to $c=(365)^5$  and Poisson approximation shows that the chance of a match is approximately $1-e^{-\lambda}\approx 1.8\%$. This implies that even a miraculous 5-fold coincidence of birthdays is actually likely to happen among the people of the US.

This is an example of the {\it law of truly large numbers} \cite{diaconismosteller}, which says that when enormous numbers of events and people and their interactions cumulate over time, almost any outrageous event is bound to occur. The point is that truly rare events are bound to be plentiful in a population of 400 million people. If a coincidence occurs to one person in a million each day, then
we expect 400 occurrences a day and close to 140,000 such occurrences a year. 
\end{example}

%
%
%
%

\begin{example}(Galton-Watson Trees)
An example where the limiting distribution of $N(G_n)$ is indeed a Poisson mixture arises in the uniform coloring of Galton-Watson trees with general offspring distribution. Let $G_n$ be a Galton-Watson tree truncated at height $n$, and let $\xi$ denote a generic random variable from the offspring distribution. Assume further that  $\mu:=\E\xi>1$. This ensures that the total progeny up
to time $n$ (which is also the number of edges in $G_n$) grows with $n$. Letting $\{S_i\}_{i=0}^\infty$ denote the size of the $i$-th generation, the total progeny up to time $n$ is $Y_n:=\sum_{i=0}^nS_i$. Assuming that the population starts with one off-spring at time $0$, that is, $S_0\equiv 1$, $S_n/\mu^n$ is a non-negative martingale (\cite[Lemma 4.3.6]{durrett}). It converges almost surely to a finite valued random variable $S_\infty$, by \cite[Theorem 4.2.9]{durrett}, which readily implies $Y_n/\mu^{n+1}$ converges almost surely to $S_\infty/(\mu-1)$. Thus,  Theorem \ref{th:birthdayuniformuniversal}  gives
$$
\P(N(G_n)=0)\rightarrow \left\{
\begin{array}{ccc}
1  & \text{if}  &  \frac{\mu^n}{c} \rightarrow 0,\\
0  & \text{if}  &   \frac{\mu^n}{c}\rightarrow \infty, \\
\E e^{-\frac{b\mu}{\mu-1}\cdot S_\infty}  & \text{if}  &   \frac{\mu^n}{c}\rightarrow b.
\end{array}
\right.
$$
Note in passing that $S_\infty\equiv 0$ if and only if $\E(\xi\log \xi)=\infty$ (\cite[Theorem 4.3.10]{durrett}). Thus, to get a nontrivial limit  the necessary and sufficient condition is $\E(\xi\log \xi)<\infty$.

\end{example}

\section{Connections to Extremal Graph Theory}
\label{sec:alon}

In the method of moment calculations of Lemma \ref{lm:expectation_difference}, we encounter the quantity $N(G, H)$, the number of isomorphic copies of $H$ in $G$.
More formally, given two graphs $G=(V(G), E(G))$ and $H=(V(H), E(H))$, we have
$$N(G, H)=\sum_{S\subset E(G):|S|=|E(H)|}\pmb 1\{G[S]=H\},$$ where the sum is over subsets $S$ of $E(G)$ with $|S|=|E(H)|$, and $G[S]$ is the subgraph of $G$ induced by the edges of $S$.

For a positive integer $\ell\geq |E(H)|$, define $N(\ell, H):=\sup_{G: |E(G)|=\ell}N(G, H)$.
For the complete graph $K_h$, Erd\H os \cite{erdoscompletesubgraphs} determined $N(\ell, K_h)$, which is also a special case of the Kruskal-Katona theorem, and posed the problem of estimating $N(\ell, H)$ for other graphs $H$. This was addressed by Alon \cite{alon81} in 1981 in his first published paper. Alon studied  the asymptotic behavior of $N(\ell, H)$ for fixed $H$, as $\ell$ tends to infinity. He identified the correct order of $N(\ell, H)$, for every fixed $H$, by proving that:
\begin{thm}[Alon \cite{alon81}]For a fixed graph $H$, there exist two positive constants $c_1=c_1(H)$ and $c_2=c_2(H)$ such that for all $\ell\geq |E(H)|$,
\begin{equation}
c_1\ell^{\gamma(H)}\leq N(\ell, H) \leq c_2\ell^{\gamma(H)},
\label{eq:alon_exponent}
\end{equation}
where $\gamma(H)=\frac{1}{2}(|V(H)|+\delta(H))$, and $\delta(H)=\max\{|S|-|N_H(S)|: S\subset V(H)\}$.
\label{th:alon_exponent}
\end{thm}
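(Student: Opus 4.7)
The plan is to show that $\gamma(H)$ coincides with a classical LP quantity and then derive both bounds from standard entropy and blow-up arguments. Introduce the fractional independence number $\alpha^*(H) := \max\{\sum_v y_v : y_u + y_v \le 1 \text{ for all } uv \in E(H),\ y_v \ge 0\}$ and its LP dual, the fractional edge cover number $\rho^*(H) := \min\{\sum_e x_e : \sum_{e \ni v} x_e \ge 1,\ x_e \ge 0\}$, so that $\alpha^*(H) = \rho^*(H)$ by strong duality. I would first prove $\gamma(H) = \alpha^*(H)$ using the half-integrality of the fractional independent-set polytope: an optimal $y^*$ can be chosen with $y^*_v \in \{0, \tfrac12, 1\}$. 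Setting $S := \{v : y^*_v = 1\}$, the edge constraint forces $S$ to be independent and $N_H(S) \subseteq \{v : y^*_v = 0\}$, from which a direct bookkeeping gives $\sum_v y^*_v = (|V(H)| + |S| - |\{v : y^*_v = 0\}|)/2 \le (|V(H)| + |S| - |N_H(S)|)/2 \le \gamma(H)$. Conversely, starting from an independent $S^*$ achieving $\delta(H)$, assigning $y_v = 1$ on $S^*$, $y_v = 0$ on $N_H(S^*)$, and $y_v = \tfrac12$ elsewhere is feasible with weight exactly $\gamma(H)$.

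For the upper bound, apply the fractional Shearer/entropy inequality. Let $X = (X_v)_{v \in V(H)}$ be a uniformly random homomorphism $H \to G$, so $H(X) = \log \hom(H, G)$; for each edge $uv \in E(H)$, the pair $(X_u, X_v)$ takes values among the $2\ell$ ordered edges of $G$, whence $H(X_u, X_v) \le \log(2\ell)$. With an optimal fractional edge cover $x^*$, fractional Shearer gives $H(X) \le \sum_{e \in E(H)} x^*_e H(X_e) \le \rho^*(H) \log(2\ell) = \gamma(H) \log(2\ell)$, hence $\hom(H, G) \le (2\ell)^{\gamma(H)}$. Each isomorphic copy of $H$ corresponds to $|\mathrm{Aut}(H)|$ injective homomorphisms, so $N(G, H) \le c_2 \ell^{\gamma(H)}$ with $c_2 = 2^{\gamma(H)}/|\mathrm{Aut}(H)|$.

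For the matching lower bound I would use a blow-up of $H$ driven by the half-integral optimizer $y^*$. For a parameter $t$, replace each vertex $v$ by $k_v := \lfloor t^{y^*_v} \rfloor$ copies and insert the complete bipartite graph between the copies of $u$ and of $v$ for every edge $uv \in E(H)$. Since the constraint $y^*_u + y^*_v \le 1$ implies $k_u k_v \le t$, one has $|E(G)| \le |E(H)| \cdot t$; moreover, edges with $y^*_u + y^*_v = 1$ contribute $\Theta(t)$, forcing $|E(G)| \asymp t$. Choosing $t \asymp \ell$ yields a graph with $\Theta(\ell)$ edges, and the natural map sending each $v$ to any of its $k_v$ copies gives $\prod_v k_v \asymp t^{\sum_v y^*_v} = t^{\alpha^*(H)} \asymp \ell^{\gamma(H)}$ injective homomorphisms, yielding at least $c_1 \ell^{\gamma(H)}$ isomorphic copies.

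The most delicate point is the combinatorial-to-LP bridge, and in particular the justification that an optimizer $S^*$ of $\delta(H)$ may be taken independent — it is precisely this that lets the half-integral vertex structure of the polytope be read off from the combinatorial definition of $\gamma(H)$. Once that equivalence is in hand, both the entropy upper bound and the blow-up lower bound match the exponent on the nose; minor technicalities, such as padding the blow-up with a few extra isolated edges to reach an arbitrary $\ell$ and handling isolated vertices of $H$ via an extra $|V(G)|$ factor per isolated vertex, are absorbed into the constants.
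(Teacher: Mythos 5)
The paper does not actually prove this statement: Theorem \ref{th:alon_exponent} is quoted from Alon's 1981 paper, whose original argument the authors describe as ``a series of combinatorial lemmas,'' and the paper only uses the result as a black box (it reproves the sharper Theorem \ref{th:alon_cycle} by a different, spectral route). Your proposal is therefore necessarily a different route; it is essentially the Friedgut--Kahn approach specialized from hypergraphs to graphs: identify $\gamma(H)$ with the fractional stable number $\alpha^*(H)$, equal by LP duality to the fractional edge cover number $\rho^*(H)$, get the upper bound from the fractional form of Shearer's entropy inequality, and match it with a weighted blow-up. This is correct and arguably cleaner and more general than Alon's original argument; the price is that you import two external facts, namely half-integrality of the fractional stable set polytope (exactly the Nemhauser--Trotter proposition the paper itself quotes in Section \ref{sec:alon}) and fractional Shearer. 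Your bookkeeping in both directions of the LP bridge, the conversion between homomorphism counts and copy counts via $|\mathrm{Aut}(H)|$, and the blow-up estimate $\prod_v \lfloor t^{y^*_v}\rfloor \gtrsim t^{\alpha^*(H)}$ with $|E(G)|\le |E(H)|\,t$ and padding by isolated edges are all sound. (The caveats about isolated vertices of $H$ are unnecessary here, since the paper's edge-subset definition of $N(G,H)$ presumes $H$ has none.)

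The one step you flag but do not close --- that the maximum in $\delta(H)=\max\{|S|-|N_H(S)|\}$ is attained by an independent set, which you need for feasibility of the assignment $(1,0,\tfrac12)$ --- does require an argument, but a short one: for arbitrary $S$ put $A:=S\setminus N_H(S)$. Then $A$ is independent, $|A|=|S|-|S\cap N_H(S)|$, and since no vertex of $A$ has a neighbour in $S$ we get $N_H(A)\subseteq N_H(S)\setminus S$, hence $|N_H(A)|\le |N_H(S)|-|S\cap N_H(S)|$ and $|A|-|N_H(A)|\ge |S|-|N_H(S)|$. With that line inserted, both inequalities $\alpha^*(H)\le\gamma(H)$ and $\gamma(H)\le\alpha^*(H)$ go through exactly as you describe, and the proof is complete.
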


Hereafter, unless mentioned otherwise, we shall only consider graphs $H$ with no isolated vertex. Using the above theorem or the definition of $\gamma(H)$ it is easy to show that $\gamma(H)\leq |E(H)|$, and the equality holds if and only if $H$ is a disjoint union of stars (Theorem 1, Alon \cite{alon86}). The following corollary gives a sharpening of Theorem 1 of \cite{alon86}:

\begin{cor}For every graph $H$, $$\gamma(H)\leq |V(H)|-\nu(H),$$ where $\nu(H)$ is the number of connected components of $H$. Moreover, the equality holds if and only if $H$ is a disjoint union of stars.
\label{cor:gammah}
\end{cor}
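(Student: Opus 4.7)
\medskip

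\noindent\emph{Proof plan.} Since $\gamma(H)=\tfrac{1}{2}(|V(H)|+\delta(H))$, the desired bound $\gamma(H)\le |V(H)|-\nu(H)$ is equivalent to the combinatorial inequality
\[
\delta(H)\;=\;\max_{S\subseteq V(H)}\bigl(|S|-|N_H(S)|\bigr)\;\le\;|V(H)|-2\nu(H),
\]
with the equality case to be characterised as $H$ being a disjoint union of stars. The plan is a two-step combinatorial argument: reduce to the connected case, then run a short case analysis on a single component.

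\emph{Reduction to connected components.} Because $H$ has no edges between distinct components, for any $S\subseteq V(H)$ the deficiency $|S|-|N_H(S)|$ splits additively over the components, and we may maximise component-by-component. Therefore $\delta(H)=\sum_C\delta(C)$, the sum running over the connected components $C$ of $H$. Together with $|V(H)|-2\nu(H)=\sum_C(|V(C)|-2)$, this reduces the statement to showing that for every connected graph $C$ with $n=|V(C)|\ge 2$, we have $\delta(C)\le n-2$, with equality if and only if $C=K_{1,n-1}$.

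\emph{The single-component case.} I would split on $S\subseteq V(C)$. If $S=\emptyset$ or $S=V(C)$, then $|S|-|N_C(S)|=0\le n-2$; in the second case one uses that $C$ has no isolated vertex, so $N_C(V(C))=V(C)$. Otherwise $1\le|S|\le n-1$ and $|N_C(S)|\ge 1$ (any vertex of $S$ has a neighbour in $C$, which lies in $N_C(S)$), yielding $|S|-|N_C(S)|\le n-2$. For the equality case, one needs $|S|=n-1$ and $N_C(S)=\{u\}$ for a single vertex $u$; writing $S=V(C)\setminus\{v\}$, a short argument rules out $u\in S$ (else $u$ would have no neighbour in $C$, contradicting connectedness with $n\ge 2$), forcing $u=v$. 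Then every vertex of $S$ has all its $C$-neighbours in $\{v\}$, and connectedness forces each to be actually adjacent to $v$, exhibiting $C$ as the star $K_{1,n-1}$. The converse $\delta(K_{1,n-1})=n-2$ is immediate on taking $S$ to be the set of leaves.

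\emph{Main obstacle.} There is no genuinely hard step: the whole argument is elementary once the claim is recast in terms of $\delta(H)$. The only point requiring care is the additive decomposition $\delta(H)=\sum_C\delta(C)$, which relies on the absence of cross-component edges and on our freedom to optimise $S$ independently in each component. With that in hand, the upper bound and the equality analysis are both forced and recover Alon's characterisation from Theorem 1 of \cite{alon86}.
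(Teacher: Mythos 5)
Your proof is correct, and for the equality characterisation it takes a genuinely different route from the paper. The upper bound is handled the same way in both arguments: decompose $\delta(H)$ additively over connected components and use $|S|-|N_C(S)|\le |V(C)|-2$ for each connected component $C$ (the paper simply asserts this bound; you supply the short case analysis proving it). The divergence is in the equality case. The paper does not touch the extremiser $S$ at all: for the ``only if'' direction it shows that a cycle of girth $g$ forces $\gamma(H)\le |V(H)|-\nu(H)+1-g/2$ via the subgraph-counting estimate (\ref{eq:exp_cycle}) combined with Theorem \ref{th:alon_exponent}, concludes that $H$ must be a forest, and then invokes Theorem 1 of Alon \cite{alon86} (that $\gamma(H)=|E(H)|$ iff $H$ is a disjoint union of stars) in both directions. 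You instead analyse the maximiser of $|S|-|N_C(S)|$ directly: for $n\ge 3$ equality forces $|S|=n-1$ and $|N_C(S)|=\{u\}$ with $u$ the excluded vertex, which exhibits $C$ as $K_{1,n-1}$. Your route is more elementary and self-contained --- it needs neither the cycle-counting lemmas nor Alon's Theorem 1 --- while the paper's route is shorter on the page because it reuses machinery already developed (and the intermediate fact that a cycle forces strict inequality is itself reused in the alternative proof of Theorem \ref{th:poissonuniversal}). The only cosmetic caveat is the $n=2$ component, where equality is also attained at $S=\emptyset$; this does not affect your conclusion since $K_2=K_{1,1}$ is a star.
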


\begin{proof}
Suppose $H_1, H_2, \ldots, H_{\nu(H)}$ are the connected components of $H$. Fix $i \in \{1,2, \ldots, \nu(H)\}$. Since $H_i$ is connected, for every $S\subset V(H_i)$,
$|S|-|N_{H_i}(S)|\leq |V(H_i)|-2$. This implies that $\delta(H)=\sum_{i=1}^{\nu(H)}\delta(H_i)\leq |V(H)|-2\nu(H)$, and $\gamma(H)\leq |V(H)|-\nu(H)$.

Now, if $H$ is a disjoint union of stars with $\nu(H)$ connected components, then by Theorem 1 of Alon \cite{alon86}, $\gamma(H)=|E(H)|=|V(H)|-\nu(H)$.

Conversely, suppose that $\gamma(H)=|V(H)|-\nu(H)$. If $H$ has a cycle of length $g \geq 3$, then from (\ref{eq:exp_cycle}) and Theorem \ref{th:alon_exponent} $\gamma(H)\leq |V(H)|-\nu(H)+1-g/2<|V(H)|-\nu(H)$. Therefore, $H$ has no cycle, that is, it is a disjoint union of trees.  This implies that $\gamma(H)=|V(H)|-\nu(H)=|E(H)|$, and from Theorem 1 of Alon \cite{alon86}, $H$ is a disjoint union of stars.
\end{proof}

\subsection{Another Proof of Theorem \ref{th:birthdayuniformuniversal}}

Theorem \ref{th:alon_exponent} and Corollary \ref{cor:gammah} give a direct proof of Lemma \ref{lm:difference_zero}, which does not require the subgraph counting Lemmas \ref{cycle} and \ref{fourth}.

With  $N(G_n)$ and $M(G_n)$  as defined before in (\ref{eq:medges}) and (\ref{eq:independentbernoulli}), and $|E(G_n)|/c\stackrel{\sD}{\rightarrow} Z$, for every fixed $k\geq 1$ we have
\begin{equation*}
|\E(N(G_n)^k|G_n)-\E(M(G_n)^k|G_n)|\lesssim_k \sum_{\substack{H\in\cH_k,\\ H \text{ has a cycle}}}N(G_n, H)\cdot \frac{1}{c^{|V(H)|-\nu(H)}}\lesssim_k \sum_{\substack{H\in\cH_k,\\ H \text{ has a cycle}}}\cdot\frac{|E(G_n)|^{\gamma(H)}}{c^{|V(H)|-\nu(H)}},
\end{equation*}
where the last inequality follows from Theorem \ref{th:alon_exponent}. As the sum is over all graphs $H$ that are not a forest, it follows from Corollary \ref{cor:gammah} that $\gamma(H)< |V(H)|-\nu(H)$. Therefore, every term in the sum goes to zero as $n\rightarrow \infty$, and, since $H\in\cH_k$ is a finite sum, Lemma \ref{lm:difference_zero} follows.


\subsection{A New Proof of Theorem \ref{th:alon_cycle} Using Lemma \ref{cycle}} This section gives a short proof of Theorem \ref{th:alon_cycle} using Lemma \ref{cycle}.

\subsubsection{Proof of Theorem \ref{th:alon_cycle}} Let $F$ be the spanning subgraph $H$, and let $F_1, F_2, \ldots, F_q$, be the connected components of $F$, where each $F_i$ is a cycle or an isolated edge, for $i \in \{1,2, \ldots, q\}$. Consider the following two cases:

\begin{description}
\item[{\it Case} 1]$F_i$ is an isolated edge. Then for any graph $G$ with $|E(G)|=\ell$,
\begin{equation}
N(G, F_i)=\ell=\frac{1}{|Aut(F_i)|}\cdot(2\ell)^{|V(F_i)|/2}.
\label{eq:isolatededge}
\end{equation}
\item[{\it Case} 2]$F_i$ is a cycle of length $g\geq 3$. Then by Lemma \ref{cycle}
\begin{equation}
N(G, F_i)\leq \frac{1}{2g}\cdot (2\ell)^{g/2}=\frac{1}{|Aut(F_i)|}\cdot(2\ell)^{|V(F_i)|/2},\label{eq:cycle}
\end{equation}
for any graph $G$ with $|E(G)|=\ell$.
\end{description}
Now, (\ref{eq:isolatededge}) and (\ref{eq:cycle}) imply that
\begin{equation}
N(G, F)\leq \prod_{i=1}^q N(G, F_i)\leq \frac{1}{\prod_{i=1}^q |Aut(F_i)|}\cdot(2\ell)^{|V(H)|/2}=\frac{1}{|Aut(F)|}\cdot(2\ell)^{|V(H)|/2},
\label{eq:isolatededge_cycle}
\end{equation}
for all graphs $G$ with $|E(G)|=\ell$.

Let $v=|V(H)|=|V(F)|$ and define $x(H, F)$ to be the number of subgraphs of $K_v$, isomorphic to $H$, that contain a fixed copy of $F$ in $K_v$. Given a graph $G$ with $|E(G)|=\ell$, every $F$ in $G$ can be completed (by adding edges) to an $H$ in $G$, in at most $x(H,F)$ ways, and in this fashion each $H$ in $G$ is obtained exactly $N(H, F)$ times (see \cite[Lemma 3]{alon81}). This implies that
\begin{equation}
N(\ell, H)\leq \frac{x(H, F)}{N(H, F)}N(\ell, F)
\label{eq:alon_bound_I}
\end{equation}
Similarly, $N(K_{v}, H)=\frac{x(H, F)}{N(H, F)}N(K_{v}, F)$ (see \cite[Lemma 6]{alon81}) and it follows from (\ref{eq:alon_bound_I}) that,
\begin{eqnarray}
N(\ell, H)\leq \frac{N(K_{v}, H)}{N(K_{v}, F)}N(\ell, F)=\frac{|Aut(F)|}{|Aut(H)|}N(\ell, F)\leq  \frac{1}{|Aut(H)|}(2\ell)^{|V(H)|/2},
\end{eqnarray}
where the last inequality follows from (\ref{eq:isolatededge_cycle}).

For the lower bound, let $s=\lfloor \sqrt{2\ell}\rfloor$ and note that,
\begin{eqnarray}
N(\ell, H)\ge N(K_{s}, H)&=&{s\choose |V(H)|}N(K_{|V(H)|}, H)\nonumber\\
&=&\frac{s^{|V(H)|}+O(s^{|V(H)|-1})}{|V(H)|!}N(K_{|V(H)|}, H)\nonumber\\
&=&\frac{(2\ell)^{|V(H)|/2}}{|Aut(H)|}+O(\ell^{|V(H)|/2-1/2}),\nonumber
\end{eqnarray}
thus completing the proof. \hfill $\Box$


\subsection{Connections to Fractional Stable Set and a Structural Lemma}

Friedgut and Kahn \cite{hypergraphcopies} extended Alon's result to hypergraphs, and identified the exponent $\gamma(H)$ as the fractional stable number of the hypergraph $H$, which is the defined as the solution of a linear programming problem. Using this alternative definition, we can define $\gamma(H)$ for any multigraph as follows:
\begin{equation}
\gamma(H)=\arg\max_{\phi\in V_{H}[0,1]} \sum_{v\in V(H)} \phi(v) \text{ subject to } \phi(x)+\phi(y)\le 1 \text{ for }(x,y)\in E(H),
\label{eq:gamma}
\end{equation}
where $V_{H}[0,1]$ is the collection of all functions $\phi: V(H)\rightarrow [0, 1]$. It is clear  that $\gamma(H)=\gamma(H_S)$, where $H_S$ is the simple graph obtained from $H$ by replacing multiple edges with single edges. The polytope $\sP_H$ defined by the constraint set of this linear program is called the {\it fractional stable set polytope} \cite{schrijver}. 

A discussion on the various useful properties of the fractional independence number $\gamma(H)$ can be found in Janson et al. \cite{janson_subgraph}. The following lemma, which is a easy consequence of the definitions, relates $\gamma(H)$ to the minimum degree of $H$, to be denoted by $d_{\min}(H)$.  As before, we will only be considering multigraphs with no isolated vertex.

\begin{lem}
Let $H=(V(H), E(H))$ be a multigraph with no isolated vertex and $d_{\min}(H)\geq 2$, and let $\varphi:V(H)\rightarrow [0, 1]$ be an optimal solution of the linear program (\ref{eq:gamma}). Then $\gamma(H)\leq \frac{1}{2}|E(H)|$. Moreover, if there exists $v\in V(H)$ such that $\varphi(v)\ne 0$ and $d(v)\geq 3$ then $\gamma(H)< \frac{1}{2}|E(H)|$.
\label{lm:degree_one}
\end{lem}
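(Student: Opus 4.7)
The plan is a direct summation of the edge constraints. Since $\varphi$ is feasible, for every edge $e = (x,y) \in E(H)$ we have $\varphi(x) + \varphi(y) \leq 1$. Summing this inequality over all edges of $H$ (counted with multiplicity, since each parallel edge contributes its own constraint) yields
$$\sum_{(x,y)\in E(H)} \bigl(\varphi(x) + \varphi(y)\bigr) = \sum_{v\in V(H)} d(v)\,\varphi(v) \;\leq\; |E(H)|,$$
where $d(v)$ denotes the degree of $v$ in the multigraph $H$ (also counted with multiplicity).

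For the first assertion, I would simply use the hypothesis $d_{\min}(H)\ge 2$, which gives $d(v)\varphi(v) \ge 2\varphi(v)$ for every $v \in V(H)$. Combined with the displayed inequality this yields
$$2\gamma(H) = 2\sum_{v\in V(H)} \varphi(v) \;\leq\; \sum_{v\in V(H)} d(v)\,\varphi(v) \;\leq\; |E(H)|,$$
and hence $\gamma(H) \le \tfrac{1}{2}|E(H)|$.

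For the strict inequality, suppose there exists $v_0 \in V(H)$ with $\varphi(v_0) \ne 0$ and $d(v_0) \ge 3$. Then $d(v_0)\varphi(v_0) \ge 3\varphi(v_0) > 2\varphi(v_0)$, while $d(v)\varphi(v) \ge 2\varphi(v)$ for every other vertex. Summing gives
$$\sum_{v\in V(H)} d(v)\,\varphi(v) \;>\; 2\sum_{v\in V(H)}\varphi(v) \;=\; 2\gamma(H),$$
so $2\gamma(H) < |E(H)|$, which is the desired strict bound.

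There is no real obstacle here; both claims are immediate once one sums the constraints against the edge set, and the only subtlety is remembering that in a multigraph the edge constraints are counted with multiplicity, matching the degree count on the right-hand side.
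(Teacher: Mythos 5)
Your proof is correct and follows essentially the same route as the paper's: sum the edge constraints to get $\sum_{v} d(v)\varphi(v) \leq |E(H)|$, use $d_{\min}(H)\geq 2$ for the weak bound, and use the extra slack $d(v_0)\varphi(v_0)\geq 3\varphi(v_0)$ with $\varphi(v_0)>0$ for the strict one. Your remark about counting parallel edges with multiplicity, matching the multigraph degree, is exactly the right bookkeeping.
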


\begin{proof}Suppose $d_{\min}(H)\geq 2$. Then for any
$\phi: V(H)\rightarrow [0, 1]$ such that  $\phi(x)+\phi(y)\le 1$ for $(x,y)\in E(H)$,
$$\sum_{x\in V(H)} \phi(x)\leq \frac{1}{d_{\min}(H)}\sum_{(x, y)\in E(H)}\{\phi(x)+\phi(y)\}\leq\frac{1}{2}|E(H)|.$$
Taking the maximum over functions $\phi$, it follows that $\gamma(H)\leq \frac{1}{2}|E(H)|$.

Now, suppose there exists $v\in V(H)$ such that $\varphi(v)\ne 0$ and $d(v)\geq 3$. Then
\begin{eqnarray}
|E(H)|\geq \sum_{(x, y)\in E(H)}\{\varphi(x)+\varphi(y)\}=\sum_{x\in V(H)} d(x)\varphi(x) \geq 3\varphi(v)+2\sum_{x\in V(H)\backslash \{v\}}\varphi(x)=2\gamma(H)+\varphi(v),\nonumber
\end{eqnarray}
and the result follows since $\varphi(v)>0$.
\end{proof}

The fractional stable set polytope $\sP(H)$ is a widely studied object in combinatorial optimization \cite{schrijver}. One of the most important property of the fractional stable set polytope is the following:

\begin{ppn}[\cite{nt_I}] Let $\phi: V(H)\rightarrow [0, 1]$ be any extreme point of the fractional stable set polytope $\sP(H)$. Then $\phi(v)\in \{0, \frac{1}{2}, 1\}$, for all $v\in V(H)$.
\end{ppn}

Let $H$ be any multi-graph with no isolated vertex, and $\varphi:V(H)\rightarrow [0, 1]$ be an extreme point of $\sP(H)$ that is the optimal solution to the linear program defined in (\ref{eq:gamma}). If $\varphi(v)=1/2$ for all $v\in V(H)$, then $\gamma(H)=|V(H)|/2$ and by Alon \cite[Theorem 4]{alon81}, $H$ is a disjoint union of cycles or isolated edges. When $\gamma(H)> |V(H)|/2$, the above proposition can be used to prove a structural result for the optimal function $\varphi$. To this end, partition $V(H)=V_0(H)\cup V_{1/2}(H)\cup V_1(H)$, where $V_a(H)=\{v\in V(H): \varphi(v)=a\}$, for $a\in \{0, 1/2, 1\}$. Note that $V_1(H)$ must be an independent set in $H$. The following lemma gives structural properties of the subgraphs of $H$ induced by this partition of the vertex set.  The proof closely follows the proof of Lemma 9 of Alon \cite{alon81}, but is re-formulated here in terms of the function $\varphi$. The lemma will be used later to prove the normality of $N(G_n)$ in Theorem \ref{th:normal_fixed}.

\begin{lem}Let $H$ be a multigraph with no isolated vertex and $\gamma(H)> |V(H)|/2$. If $\varphi:V(H)\rightarrow [0, 1]$ is an optimal solution to the linear program (\ref{eq:gamma}), then the following holds:
\begin{enumerate}
\item[(i)] The bipartite graph $H_{01}=(V_0(H)\cup V_1(H), E(H_{01}))$, where $E(H_{01})$ is set of edges from $V_0(H)$ to $V_1(H)$, has a matching which saturates every vertex in $V_0(H)$.

\item[(ii)] The subgraph of $H$ induced by the vertices of $V_{1/2}(H)$ has a spanning subgraph which is a disjoint union of cycles and isolated edges.
\end{enumerate}
\label{lm:gamma_structural}
\end{lem}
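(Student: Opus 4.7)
The plan is to obtain both parts by carefully perturbing the optimal fractional solution $\varphi$, then applying Hall's theorem for (i) and the half-integrality of the fractional perfect matching polytope for (ii). For part (i), suppose for contradiction that Hall's condition fails on the $V_0(H)$ side of $H_{01}$, so that some $S\subseteq V_0(H)$ satisfies $|N_H(S)\cap V_1(H)|<|S|$. Define $\varphi'$ to equal $\varphi$ outside $S\cup(N_H(S)\cap V_1(H))$ and equal $1/2$ on this set. The objective increases by $\tfrac{1}{2}(|S|-|N_H(S)\cap V_1(H)|)>0$, and feasibility is preserved because every $H$-edge from $S$ to $V_1(H)$ must land in $N_H(S)\cap V_1(H)$, so both endpoints become $1/2$ and the edge constraint reads $\tfrac{1}{2}+\tfrac{1}{2}=1$; the remaining edge types are immediate. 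This contradicts the optimality of $\varphi$, so Hall's condition holds, and (i) follows.

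For part (ii), write $H^*=H[V_{1/2}(H)]$. A preliminary observation is that $H^*$ has no isolated vertex: an isolated $v\in V_{1/2}(H)$ would have all its $H$-neighbors in $V_0(H)$, since a neighbor in $V_1(H)$ already violates $\varphi(v)+\varphi(y)\le 1$, and then raising $\varphi(v)$ from $1/2$ to $1$ would strictly improve the objective. Next, the restriction $\varphi|_{V_{1/2}(H)}\equiv\tfrac{1}{2}$ is an optimal fractional independent set of $H^*$: any feasible $\psi$ on $V_{1/2}(H)$ with strictly larger sum extends by $0$ on $V_0(H)$ and $1$ on $V_1(H)$ to a feasible solution on $H$, using that no edge of $H$ joins $V_1(H)$ to itself or to $V_{1/2}(H)$; the resulting value exceeds $\gamma(H)$, contradicting the optimality of $\varphi$.

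Having $\varphi|_{V_{1/2}(H)}\equiv\tfrac{1}{2}$ as the optimum, LP duality gives that the fractional edge cover number of $H^*$ equals $|V_{1/2}(H)|/2$, and summing the $\ge 1$ edge-cover constraints against the identity $\sum_v\sum_{e\ni v}z_e=\sum_e 2z_e=|V_{1/2}(H)|$ forces any optimal dual $z^*$ to satisfy $\sum_{e\ni v}z^*_e=1$ for every $v\in V_{1/2}(H)$, i.e., $z^*$ is a fractional perfect matching of $H^*$. Choosing $z^*$ to be an extreme point of this polytope, the classical half-integrality theorem forces $z^*\in\{0,\tfrac{1}{2},1\}^{E(H^*)}$, with the $z^*=1$ edges forming a matching and the $z^*=\tfrac{1}{2}$ edges forming vertex-disjoint odd cycles of length at least three; together they yield a spanning subgraph of $H^*$ that is a disjoint union of cycles and isolated edges, proving (ii). The main obstacle is the identification of $\varphi|_{V_{1/2}(H)}$ as the optimum of the sub-LP on $H^*$ and the subsequent invocation of half-integrality of the fractional perfect matching polytope; once these are in hand, both claims follow cleanly from the optimality of $\varphi$.
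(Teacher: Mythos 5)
Your proof is correct, but it takes a genuinely different route from the paper's in both parts. For (i), the paper also argues via Hall's theorem, but its perturbation is the complementary one: given a violating set $A\subset V_0(H)$ it raises $B=V_1(H)\setminus N_{H_{01}}(A)$ to $1$, zeroes out $N_H(B)$, and sets everything else to $1/2$; this requires the hypothesis $\gamma(H)>|V(H)|/2$ to guarantee $B\neq\emptyset$. Your perturbation --- averaging $S$ and $N_H(S)\cap V_1(H)$ to $1/2$ --- is more local, its feasibility check is immediate, and it does not use the hypothesis $\gamma(H)>|V(H)|/2$ at all, so it actually proves (i) in slightly greater generality. For (ii), the paper does not touch matching polytopes: it quotes Alon's Lemma 7, which reduces the existence of the spanning union of cycles and isolated edges in $F=H[V_{1/2}(H)]$ to the Hall-type condition $|N_F(S)|\geq|S|$ for all $S\subset V(F)$, and then verifies that condition by a second perturbation (moving mass onto $D=C\cup V_1(H)$). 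You instead prove directly that $\varphi|_{V_{1/2}(H)}\equiv\tfrac12$ is optimal for the sub-LP on $F$ (your extension-by-$0$-and-$1$ argument is valid, since feasibility of $\varphi$ rules out edges from $V_1(H)$ to $V_1(H)\cup V_{1/2}(H)$), pass by LP duality to a fractional perfect matching of $F$ (your averaging argument forcing every dual constraint to be tight is correct, and your preliminary observation that $F$ has no isolated vertices is needed and correctly established), and then invoke half-integrality of extreme points of the fractional perfect matching polytope. This buys a self-contained argument that replaces the appeal to Alon's combinatorial lemma with a classical polyhedral fact (which does hold for multigraphs, the parallel doubled edge with two weights $1/2$ not being extreme), at the cost of importing LP duality and the half-integrality theorem; the paper's route stays entirely within elementary perturbations of $\varphi$ plus the cited lemma. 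Both are complete proofs.
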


\begin{proof} By Hall's marriage theorem, the bipartite graph $H_{01}$ has a matching that saturates every vertex in $V_0(H)$, if and only if for all $S \subset V_0(H)$, $|N_{H_{01}}(S)|\geq |S|$. Suppose this is false and there exists $A \subset V_0(H)$ such that $|N_{H_{01}}(A)|< |A|$.
Let $B=V_1(H)\backslash N_{H_{01}}(A)$. By assumption, $\gamma(H)=\frac{1}{2}|V_{1/2}(H)|+|V_1(H)|> \frac{1}{2}(|V_0(H)|+|V_{1/2}(H)|+|V_1(H)|)$, which implies that $|V_1(H)|> |V_0(H)|$, and so
$$|B|=|V_1(H)|-|N_{H_{01}}(A)|>|V_1(H)|-|A|\geq |V_1(H)|-|V_0(H)|>0.$$ 
In particular, this implies that the set $B$ is not empty.

Now, define a function $\tilde\varphi:V(H)\rightarrow [0, 1]$ such that
$$
\tilde\varphi(v):=\left\{
\begin{array}{ccc}
1  & \text{ if }  v \in B, \\
0  & \text{ if }   v \in N_H(B), \\
\frac{1}{2}  & \text{otherwise}.
\end{array}
\right.
$$
Note that $\tilde\phi\in \sP(H)$, and $\sum_{v\in V}\tilde\varphi(v)=\frac{1}{2}(|V(H)|+|B|-|N_H(B)|)$. Now, as $N_H(B)\subseteq V_0(H)\backslash A$,
\begin{eqnarray}
\sum_{v\in V}\tilde\varphi(v)=\frac{1}{2}(|V(H)|+|B|-|N_H(B)|)&=&\frac{1}{2}(|V(H)|+|V_1(H)|-|N_{H_{01}}(A)|-|N_H(B)|)\nonumber\\
&\geq &\frac{1}{2}(|V(H)|+|V_1(H)|-|V_0(H)|-|N_{H_{01}}(A)|+|A|)\nonumber\\
&>&\frac{1}{2}(|V(H)|+|V_1(H)|-|V_0(H)|)\nonumber\\
&=&\frac{1}{2}|V_{1/2}(H)|+|V_1(H)|=\gamma(H).
\end{eqnarray}
This contradicts the maximality of $\gamma(H)$ and proves that $H_{01}$ has a matching that saturates every vertex in $V_0(H)$.

Next, denote the subgraph of $H$ induced by the vertices of $V_{1/2}(H)$  by $F$. To prove that $F$ has a spanning subgraph which is a disjoint union of cycles and isolated edges, it suffices to show that for all $S\subset V(F)$, $|N_{F}(S)|\geq |S|$ (Lemma 7, Alon \cite{alon81}). Assuming this is false, there exists $C\subset V(F)$, such that $C\cap N_H(C)=\emptyset$ and $|N_{F}(C)|< |C|$. Define $D=C\cup V_1(H)$. Note that $N_H(D)=N_H(C)\cup N_H(V_1(H))=N_{F}(C)\cup N_H(V_1(H))$
$$|D|-|N_H(D)|=|V_1(H)|-|N_{H}(V_1(H))|+(|C|-|N_{F}(C)|)>|V_1(H)|-|V_0(H)|.$$
Then defining $\tilde\varphi:V(H)\rightarrow [0, 1]$ as
$$
\tilde\varphi(v):=\left\{
\begin{array}{ccc}
1  & \text{ if }  v \in D, \\
0  & \text{ if }   v \in N_H(D), \\
\frac{1}{2}  & \text{otherwise},
\end{array}
\right.
$$
a contradiction can be obtained as before.
\end{proof}

\section{Universal Normal Approximation For Uniform Coloring}
\label{sec:normal}

Theorem \ref{th:birthdayuniformuniversal} says that if $\frac{1}{c}|E(G_n)|\stackrel{\sP}{\rightarrow} \infty$, then $N(G_n)$ converges to infinity as well. Since a Poisson random variable with mean growing to $\infty$ converges to a standard normal distribution after standardizing (centering by mean and scaling by standard deviation),  one possible question of interest is whether $N(G_n)$ properly standardized converges to a standard normal distribution. 
It turns out, as in the Poisson limit theorem, the normality of the standardized random variable $N(G_n)$ is universal 
whenever both $c=c(n)$ and $|E(G_n)|/c$ go to infinity. This will be proved by a similar method of moments argument. For this proof, without loss of generality we can assume that $E(G_n)\ge c$ almost surely, for every $n$. This is because $\P(E(G_n)/c< 1)$ converges to $0$ as $n\rightarrow\infty$, and so replacing the law of $G_n$ by the conditional law of $(G_n|G_n\ge c)$ does not affect any of the limiting distribution results. 


\subsection{Proof of Theorem \ref{th:normal}}Let $G_n\in \sG_n$ be a random graph sampled according to some probability distribution. This section proves a universal normal limit theorem for
\begin{equation}
Z_n:=\left(\frac{|E(G_n)|}{c}\right)^{-\frac{1}{2}}\sum_{(i,j)\in E(G_n)}\left\{\pmb 1\{Y_i=Y_j\}-\frac{1}{c}\right\}=\left(\frac{|E(G_n)|}{c}\right)^{-\frac{1}{2}}\left(N(G_n)-\frac{E(G_n)}{c}\right).
\label{eq:z}
\end{equation}
Associated with every edge of $G_n$ define the collection of random variables
$\{X_{ij}, (i, j)\in E(G_n)\}$, where $X_{ij} \text{ are i.i.d. } \dBer(1/c)$, and set 
\begin{equation}
W_n:=\left(\frac{|E(G_n)|}{c}\right)^{-\frac{1}{2}}\sum_{(i,j)\in E(G_n)}\left\{X_{(i, j)}-\frac{1}{c}\right\}=\left(\frac{|E(G_n)|}{c}\right)^{-\frac{1}{2}}\left(M(G_n)-\frac{E(G_n)}{c}\right).
\label{eq:w}
\end{equation}

\subsubsection{Comparing Conditional Moments} A multi-graph $G=(V, E)$ is a graph where multiple edges are allowed but there are no self loops. For a multi-graph $G$ denote by $G_S$ the simple graph obtained from $G$ by replacing the multiple edges with a single edge. A multi-graph $H$ is said to be a multi-subgraph of $G$ if the simple graph $H_S$ is a subgraph of $G$.

\begin{obs}Let $H=(V(H), E(H))$ be a multigraph with no isolated vertex. Let $F$ be a multigraph obtained by removing one edge from $H$ and removing all isolated vertices formed.
Then $|V(F)|-\nu(F)\geq |V(H)|-\nu(H)-1$.
\label{ob:graphinequality}
\end{obs}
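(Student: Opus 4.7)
The plan is to introduce the \emph{rank} of a (multi)graph $G$, defined by
\[
r(G) := |V(G)| - \nu(G),
\]
and to verify two elementary invariance/monotonicity properties from which the lemma drops out immediately.

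First I would check that removing a single edge from any multigraph can decrease $r$ by at most $1$. Indeed, if $e \in E(H)$ and $H' := H - e$, then clearly $V(H') = V(H)$, and deleting one edge can increase the number of connected components by at most $1$: the component of $H$ containing $e$ either remains connected in $H'$ (if $e$ is not a bridge, e.g.\ if $e$ is part of a parallel pair or a cycle) or splits into exactly two components (if $e$ is a bridge). Hence $\nu(H') \le \nu(H) + 1$, so $r(H') \ge r(H) - 1$.

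Next I would check that deleting an isolated vertex does not change $r$ at all: removing such a vertex decreases $|V(G)|$ by $1$ and also decreases $\nu(G)$ by $1$ (since the isolated vertex was its own component), leaving $r(G)$ unchanged. Iterating, stripping off all the isolated vertices formed after the edge deletion — which is precisely how $F$ is obtained from $H - e$ — preserves the rank: $r(F) = r(H - e)$.

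Combining the two observations, $r(F) = r(H - e) \ge r(H) - 1$, which is exactly the desired inequality $|V(F)| - \nu(F) \ge |V(H)| - \nu(H) - 1$. There is no serious obstacle here; the only point that might deserve a sentence of care is that the claim about $\nu$ increasing by at most $1$ uses the multigraph setting, where ``removing one edge'' means removing a single copy (so parallel edges genuinely keep the endpoints connected), matching the convention set out just before the statement.
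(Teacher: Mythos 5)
Your proof is correct and rests on the same elementary facts as the paper's (edge deletion increases the component count by at most one; an isolated vertex is its own component), just packaged as an invariance argument for the rank $r(G)=|V(G)|-\nu(G)$ rather than the paper's case analysis on whether zero, one, or two isolated vertices get stripped after the edge is removed. Your factoring of the operation into ``delete the edge'' then ``strip isolated vertices'' is arguably cleaner, but it is not a genuinely different route.
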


\begin{proof}Observe that $\nu(F)\leq \nu(H)+1$ and $|V(H)|-2\leq |V(F)|\leq |V(H)|$. If $|V(F)|=|V(H)|$ the result is immediate. 

Now, if $|V(F)|=|V(H)|-1$, then the vertex removed must have degree 1 and so $\nu(F)=\nu(H)$, and the inequality still holds. 

Finally, if $\nu(F)=\nu(H)-2$, the edge removed must be an isolated edge, in which case the number of vertices decreases by two and the number of connected components decreases by one and the result holds.
\end{proof}

The above observation helps to determine the leading order of the expected central moments for multi-subgraphs of $G_n$.

\begin{lem}
For any multi-subgraph $H=(V(H), E(H))$ of $G_n$ define $$Z(H)=\prod_{(i, j)\in E(H)}\left\{\pmb 1\{Y_i=Y_j\}-\frac{1}{c}\right\}, \quad \text{ and } \quad W(H)=\prod_{(i, j)\in E(H)}\left\{X_{(i, j)}-\frac{1}{c}\right\}.$$ Then $\E(Z(H))\lesssim_H \frac{1}{c^{|V(H)|-\nu(H)}}$ and $\E(W(H))\lesssim_H \frac{1}{c^{|E(H_S)|}}$.
\label{lm:leadingorder}
\end{lem}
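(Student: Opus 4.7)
\medskip

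\noindent\textbf{Proof plan.} Both bounds proceed by expanding the product of centered factors. Part (ii) is essentially trivial thanks to independence of the $X_e$'s across distinct simple edges. Writing $m_e \geq 1$ for the multiplicity of $e \in E(H_S)$ in $H$,
\[
\E W(H) \;=\; \prod_{e \in E(H_S)} \E\bigl[(X_e - 1/c)^{m_e}\bigr].
\]
If some $m_e = 1$ the corresponding factor vanishes; otherwise every $m_e \geq 2$, and a direct calculation gives $\E[(X_e - 1/c)^{m_e}] = \tfrac{1}{c}(1 - 1/c)^{m_e} + (1 - 1/c)(-1/c)^{m_e} = O(1/c)$. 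Either way $|\E W(H)| \lesssim_H c^{-|E(H_S)|}$.

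Part (i) is more delicate since $\mathbf{1}\{Y_i = Y_j\}$ and $\mathbf{1}\{Y_{i'} = Y_{j'}\}$ are dependent whenever the two edges share a vertex. The plan is to induct on $|E(H)|$, invoking Observation \ref{ob:graphinequality} at each inductive step. The base case $|E(H)|=1$ is immediate since $\E[\mathbf{1}\{Y_i = Y_j\} - 1/c] = 0$. For the inductive step, fix an edge $e = (i,j) \in E(H)$ and split into two cases according to the degrees of its endpoints in $H$. If some endpoint of $e$, say $i$, has degree one in $H$, then $Y_i$ enters $Z(H)$ only through the factor $\mathbf{1}\{Y_i = Y_j\} - 1/c$, and conditioning on the remaining color variables gives $\E Z(H) = 0$, which trivially respects the bound.

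Otherwise both endpoints of $e$ have degree at least two, so $F := H - e$ still has no isolated vertex, and one writes
\[
\E Z(H) \;=\; \E\bigl[\mathbf{1}\{Y_i = Y_j\}\, Z(F)\bigr] \;-\; \tfrac{1}{c}\,\E Z(F).
\]
Observation \ref{ob:graphinequality} gives $|V(F)| - \nu(F) \geq |V(H)| - \nu(H) - 1$, so the induction hypothesis controls the second term as $\tfrac{1}{c}|\E Z(F)| \lesssim_H c^{-(|V(H)| - \nu(H))}$. For the first term, conditioning on $\{Y_i = Y_j = a\}$ (an event of probability $1/c$, with the common value uniformly distributed on $[c]$ and independent of the other $Y_k$'s) has the effect of identifying the two endpoints of $e$ into a single vertex, producing a multigraph $F^*$ with $|V(F^*)| = |V(H)| - 1$. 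A short case analysis on whether $i,j$ lie in the same connected component of $F$ shows that $\nu(F^*) = \nu(H)$ in either sub-case, so $|V(F^*)| - \nu(F^*) = |V(H)| - \nu(H) - 1$, and $F^*$ has no isolated vertex. A second application of the induction hypothesis then yields $\E[\mathbf{1}\{Y_i = Y_j\} Z(F)] = \tfrac{1}{c}\E Z(F^*) \lesssim_H c^{-(|V(H)| - \nu(H))}$, and summing the two contributions closes the induction.

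The main obstacle is the second case of part (i): the correlation between $\mathbf{1}\{Y_i = Y_j\}$ and $Z(F)$ must be resolved by the vertex-identification trick, and one has to verify that the resulting multigraph $F^*$ has the correct value of $|V| - \nu$ irrespective of whether $i, j$ were originally in the same component of $F$. Both sub-cases nevertheless produce $|V(F^*)| - \nu(F^*) = |V(H)| - \nu(H) - 1$, in exact agreement with the bookkeeping already supplied by Observation \ref{ob:graphinequality}.
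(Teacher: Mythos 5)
Your proof is correct, but it takes a genuinely different route from the paper's. The paper establishes both bounds by a single multinomial expansion of the centered product: each term equals $(-1/c)^{b}$ times $\E\prod\pmb 1\{Y_{i_s}=Y_{j_s}\}=c^{-(|V(F)|-\nu(F))}$ for the sub-multigraph $F$ spanned by the retained edges, and iterating Observation \ref{ob:graphinequality} $b$ times gives $|V(F)|-\nu(F)\ge |V(H)|-\nu(H)-b$, so every term is at most $c^{-(|V(H)|-\nu(H))}$ in absolute value; for $W(H)$ it simply notes that the leading term of the analogous expansion is $c^{-|E(H_S)|}$. You instead run an edge-peeling induction, splitting on whether the peeled edge has a degree-one endpoint (in which case the expectation vanishes) and otherwise resolving the correlation between the peeled indicator and $Z(H-e)$ by conditioning on $\{Y_i=Y_j\}$ and identifying the two endpoints. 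What your route buys: the treatment of $W(H)$ by factorizing over distinct simple edges is cleaner and more explicit than the paper's one-line remark (which tacitly requires checking that no other term of the expansion dominates), and the degree-one case of your induction is exactly the observation the paper reuses later in Lemma \ref{lm:moment_comparison}. What it costs is the vertex-identification bookkeeping, which the paper's one-shot expansion avoids entirely, and this is where your one imprecision sits: when the peeled edge $(i,j)$ has multiplicity $m\ge 2$ in $H$, the remaining $m-1$ parallel copies become self-loops in $F^*$, which is then not a multigraph in the paper's sense (self-loops are excluded) and is not literally covered by your induction hypothesis. The fix is routine --- each self-loop contributes the constant factor $1-1/c\le 1$, and deleting the self-loops (together with the merged vertex if it thereby becomes isolated, which removes one vertex and one component and hence leaves $|V|-\nu$ unchanged) reduces to a legitimate multigraph with the same value of $|V|-\nu$ --- but it should be stated.
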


\begin{proof}By expanding the product,
\begin{equation}
Z(H)=\sum_{b=0}^{|E(H)|}\frac{(-1)^b}{c^b}\sum_{\substack{(i_s, j_s)\in E(H),\\ s\in [|E(H)|-b]}}\prod_{s=1}^{|E(H)|-b}\pmb 1\{Y_{i_s}=Y_{j_s}\},
\label{eq:wexpand}
\end{equation}
where the second sum is over all possible choices of $|E(H)|-b$ distinct multi-edges $(i_1,j_1), (i_2,j_2)\ldots $ $(i_{|E(H)|-b},j_{|E(H)|-b})$ from the multiset $E(H)$.

Let $F$ be the subgraph of $H$ formed by $(i_1,j_1), (i_2,j_2)\ldots (i_{|E(H)|-b},j_{|E(H)|-b})$.  Then by Observation \ref{ob:graphinequality}, $|V(F)|-\nu(F)\geq |V(H)|-\nu(H)-b$, and
\begin{equation}\frac{1}{c^{b}}\E\left(\prod_{s=1}^{|E(H)|-b}\pmb 1\{Y_{i_s}=Y_{j_s}\}\right)=\frac{1}{c^{|V(F)|-\nu(F)+b}}\leq \frac{1}{c^{|V(H)|-\nu(H)}}.
\label{eq:wterm}
\end{equation}
As the number of terms in (\ref{eq:wexpand}) depends only on $H$, and for every term (\ref{eq:wterm}) holds, the result follows.

The result for $W(H)$ follows similarly. The leading order of the expectation comes from the first term
$$\E\left(\prod_{(i, j)\in E(H)}X_{(i, j)}\right)=\frac{1}{c^{|E(H_S)|}},$$ and the number of terms depends only on $H$.
\end{proof}

Expanding the product also shows that the expected central moments of $Z(H)$ and $W(H)$ are equal when the underlying simple graph is a tree.

\begin{lem}
For any multi-subgraph $H=(V(H), E(H))$ of $G_n$, such that the underlying simple graph $H_S$ is a tree, $\E(Z(H))=\E(W(H))$.
\label{lm:tree_equal}
\end{lem}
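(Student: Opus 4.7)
The plan is to expand both $Z(H)$ and $W(H)$ as sums over sub-multisets of the edge multiset $E(H)$ and match the two expansions termwise. Writing $\mathbf 1_e := \mathbf 1\{Y_i = Y_j\}$ for $e=(i,j)$ and expanding
\begin{align*}
Z(H) = \prod_{e\in E(H)}\left(\mathbf 1_e - \frac{1}{c}\right), \qquad W(H) = \prod_{e\in E(H)}\left(X_e - \frac{1}{c}\right),
\end{align*}
both $\E Z(H)$ and $\E W(H)$ become $\sum_{S\subseteq E(H)}(-1/c)^{|E(H)|-|S|}$ times the expectation of the corresponding restricted product $\prod_{e\in S}\mathbf 1_e$ or $\prod_{e\in S}X_e$, where $S$ ranges over sub-multisets of $E(H)$.

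The first key observation is that $\mathbf 1_e$ and $X_e$ are both $\{0,1\}$-valued, hence idempotent under multiplication. Consequently, for any sub-multiset $S\subseteq E(H)$, both $\prod_{e\in S}\mathbf 1_e$ and $\prod_{e\in S}X_e$ depend only on the underlying simple edge set, which I denote $F_S$ (a subgraph of $H_S$). I would then evaluate the two expectations separately: by the same computation used already in Lemma \ref{lm:expectation_difference}, $\E\prod_{e\in E(F_S)}\mathbf 1_e = c^{-(|V(F_S)|-\nu(F_S))}$, while by independence of the $X_e$'s, $\E\prod_{e\in E(F_S)}X_e = c^{-|E(F_S)|}$.

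The tree hypothesis now finishes the argument: since $H_S$ is a tree, every $F_S$ is a subforest, so the identity $|V(F_S)|-\nu(F_S) = |E(F_S)|$ holds for every $S$. Therefore the two expectations coincide on every term of the expansion, and summing over $S$ yields $\E Z(H) = \E W(H)$. The only delicate point is the bookkeeping around multi-edges in the expansion, but the idempotency observation makes the collapse to $F_S$ well defined regardless of which copies of a repeated edge are selected, so this should pose no real obstacle.
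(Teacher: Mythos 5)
Your proof is correct and follows essentially the same route as the paper: expand both products over sub-multisets of $E(H)$, reduce each term to the underlying simple subgraph $F_S$, and use $|V(F_S)|-\nu(F_S)=|E(F_S)|$ for subforests of the tree $H_S$ to match the expansions termwise. The only (harmless) difference is that you make the idempotency of the indicators and of the $X_e$'s explicit, whereas the paper leaves that bookkeeping implicit.
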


\begin{proof}By (\ref{eq:wexpand})
\begin{eqnarray}
\E(Z(H))&=&\sum_{b=0}^{|E(H)|}\frac{(-1)^b}{c^b}\sum_{\substack{(i_s, j_s)\in E(H),\\ s\in [|E(H)|-b]}}\E\left(\prod_{s=1}^{|E(H)|-b}\pmb 1\{Y_{i_s}=Y_{j_s}\}\right)\nonumber\\
&=&\sum_{b=0}^{|E(H)|}\frac{(-1)^b}{c^b}\sum_{\substack{(i_s, j_s)\in E(H),\\ s\in [|E(H)|-b]}}\frac{1}{c^{|V(F)|-\nu(F)}}\nonumber\\
&=&\sum_{b=0}^{|E(H)|}\frac{(-1)^b}{c^b}\sum_{\substack{(i_s, j_s)\in E(H),\\ s\in [|E(H)|-b]}}\frac{1}{c^{|E(F_S)|}},
\label{eq:tree_equal}
\end{eqnarray}
where $F$ is the subgraph of $H$ formed by $(i_1,j_1), (i_2,j_2)\ldots (i_{|E(H)|-b},j_{|E(H)|-b})$. The last equality 
uses $|V(F)|-\nu(F)=|E(F_S)|$, because $F_S$ is a tree, since $H_S$ is a tree. The result now follows because every term in (\ref{eq:tree_equal}) is equal to every term in the expansion of $\E(W(H))$.
\end{proof}

Using the above lemmas and results about the fractional independence number $\gamma(H)$ from the previous section, the conditional moments of $Z_n$ and $W_n$ can be compared. For a simple graph $G$ and a multigraph $H$ define
$$M(G, H)=\sum_{e_1\in E(G)}\sum_{e_2\in E(G)}\cdots \sum_{e_{|E(H)|}\in E(G)}\pmb 1\{G[e_1, e_2, \ldots e_{|E(H)|}]=H\},$$
where $G[e_1, e_2, \ldots e_{|E(H)|}]$ is the multi-subgraph of $G$ formed by the edges $e_1, e_2, \ldots, e_{|E(H)|}$. It is easy to see that $M(G, H)\lesssim_H N(G, H_S)$.

\begin{lem}
Let $Z_n$ and $W_n$ be as defined in (\ref{eq:z}) and (\ref{eq:w}) respectively, and $\cM_k$ be the set of all multi-graphs of $G_n$ with exactly $k$ multi edges and $d_{\min}(H)\ge 2$ and $\gamma(H)=|E(H)|/2$. Then for every $k\geq 1$,
$$\left||\E (Z_n^k|G_n)-\E (W_n^k|G_n)|-\Big|\left(\frac{|E(G_n)|}{c}\right)^{-\frac{k}{2}}\sum_{\substack{H\in \cM_k\\ }}M(G_n, H)\E(W(H))-\E(Z(H))\Big|\right|\pto0,$$
whenever $|E(G_n)|/c\pto \infty$, irrespective of whether $c\rightarrow\infty$ or $c$ is fixed.
\label{lm:moment_comparison}
\end{lem}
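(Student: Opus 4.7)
The starting point is the multinomial identity
\begin{align*}
\E(Z_n^k|G_n)-\E(W_n^k|G_n)=\left(\frac{|E(G_n)|}{c}\right)^{-k/2}\sum_H M(G_n,H)\bigl(\E(Z(H))-\E(W(H))\bigr),
\end{align*}
where $H$ ranges over the (finitely many) isomorphism classes of multigraphs with $k$ multi-edges and no isolated vertex. By the elementary inequality $\bigl||x+y|-|x|\bigr|\le|y|$, it suffices to show that each summand with $H\notin\cM_k$ tends to zero in probability.

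Such $H$ fall into three types. (i) If $H$ has a vertex $v$ of degree one with unique incident edge $(v,w)$, then the factor $\pmb 1\{Y_v=Y_w\}-1/c$ (resp.\ $X_{(v,w)}-1/c$) appears just once in $Z(H)$ (resp.\ $W(H)$); integrating out $Y_v$ (resp.\ $X_{(v,w)}$) using its independence of the remaining variables gives $\E(Z(H))=\E(W(H))=0$. (ii) If $d_{\min}(H)\ge 2$ but $H_S$ is a forest, then the proof of Lemma \ref{lm:tree_equal} extends verbatim, since every submultigraph $F\subseteq H$ satisfies $|V(F)|-\nu(F)=|E(F_S)|$ (because $F_S$ is again a forest), yielding $\E(Z(H))=\E(W(H))$. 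In both cases the summand is identically zero. (iii) The remaining case is $d_{\min}(H)\ge 2$ together with $H_S$ containing a cycle; Lemma \ref{lm:degree_one} then gives $\gamma(H)\le k/2$, and $H\notin\cM_k$ forces the strict inequality $\gamma(H)<k/2$.

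For case (iii) I combine three ingredients: Alon's Theorem \ref{th:alon_exponent} together with $M(G_n,H)\lesssim_H N(G_n,H_S)$ gives $M(G_n,H)\lesssim_H |E(G_n)|^{\gamma(H)}$; Lemma \ref{lm:leadingorder} together with $|E(H_S)|\ge|V(H)|-\nu(H)$ yields $|\E(Z(H))|+|\E(W(H))|\lesssim_H c^{-(|V(H)|-\nu(H))}$; and Corollary \ref{cor:gammah} supplies $\gamma(H)\le|V(H)|-\nu(H)$. Combining these, the absolute value of the summand is bounded by
\begin{align*}
\left(\frac{|E(G_n)|}{c}\right)^{\gamma(H)-k/2}c^{\gamma(H)-(|V(H)|-\nu(H))}\le\left(\frac{|E(G_n)|}{c}\right)^{\gamma(H)-k/2},
\end{align*}
which tends to zero in probability since $\gamma(H)-k/2<0$ and $|E(G_n)|/c\pto\infty$.

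The principal obstacle is the bookkeeping in case (iii): the three ingredients must slot together so that the exponent of $c$ comes out non-positive (guaranteed by Corollary \ref{cor:gammah}) while the exponent of $|E(G_n)|/c$ comes out strictly negative (guaranteed by Lemma \ref{lm:degree_one} together with $H\notin\cM_k$). The non-positivity of the $c$-exponent is exactly what makes the argument valid both when $c\to\infty$ and when $c$ is fixed, as the statement of the lemma demands.
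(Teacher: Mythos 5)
Your proof is correct and follows essentially the same route as the paper: the multinomial expansion, the vanishing of terms for multigraphs with a degree-one vertex, and the decay bound for $\gamma(H)<|E(H)|/2$ obtained by combining Theorem \ref{th:alon_exponent}, Lemma \ref{lm:leadingorder}, and Corollary \ref{cor:gammah} are exactly the paper's (\ref{eq:w_h})--(\ref{eq:u_h}). Your case (ii) (forests with $d_{\min}\ge 2$) is a harmless redundancy: the paper simply absorbs those $H$ with $\gamma(H)<|E(H)|/2$ into the decay-bound case rather than invoking the exact moment identity there.
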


\begin{proof}
By the multinomial expansion,
\begin{eqnarray}
\E(Z_n^k|G_n)-\E(W_n^k|G_n)&=&
\left(\frac{|E(G_n)|}{c}\right)^{-\frac{k}{2}}\sum_{\substack{H\in \cH_k\\ }}M(G_n, H)\left(\E(Z(H))-\E(W(H))\right),
\label{eq:zn}
\end{eqnarray}
where $\cH_k$ is the set of all multi-subgraphs of $G_n$ with exactly $k$ multiple edges and no isolated vertex.

Let $\cN_k$ be the collection of all multi-subgraphs $H$ of $G_n$ with $d_{\min}(H)=1$. To begin with assume that $H\in \cN_k$ and that the labelled vertex 1 has degree 1. Suppose, vertex $s \in [n]\backslash \{1\}$ is the only neighbor of 1 and $Y_{-1}=\{Y_1, Y_2, \ldots, Y_n\}\backslash \{Y_1\}$. Therefore,
$$\E(Z(H)|Y_{-1}, G_n)=\left(\E(\pmb 1\{Y_1=Y_s\}|Y_{-1}, G_n)-\frac{1}{c}\right)\prod_{\substack{(i, j)\in E(H),\\ (i, j)\ne (1, s)}}\left\{\pmb 1\{Y_i=Y_j\}-\frac{1}{c}\right\}=0,$$
which implies $\E(Z(H)|G_n)=0$. Similarly, $\E(W(H)|G_n)=0$. Therefore, for all graphs $H\in \cN_k$, the corresponding term in the sum (\ref{eq:zn}) is zero. 

Now, for $H\in \cH_k$ such that $\gamma(H)<|E(H)|/2$,
\begin{eqnarray}
\left(\frac{|E(G_n)|}{c}\right)^{-\frac{|E(H)|}{2}}M(G_n, H)\cdot\E(Z(H))&\lesssim_H&\left(\frac{|E(G_n)|}{c}\right)^{-\frac{|E(H)|}{2}} \frac{N(G_n, H_S)}{c^{|V(H)|-\nu(H)}}\nonumber\\
&\lesssim_H& \frac{|E(G_n)|^{\gamma(H)-\frac{1}{2}|E(H)|}}{c^{|V(H)|-\nu(H)-\frac{1}{2}|E(H)|}}\nonumber\\
&\lesssim_H& \frac{\omega^{\gamma(H)-\frac{1}{2}|E(H)|}}{c^{|V(H)|-\nu(H)-\gamma(H)}}\stackrel{\sP}{\rightarrow} 0,
\label{eq:w_h}
\end{eqnarray}
whenever $\omega=|E(G_n)|/c\pto \infty$, since $\gamma(H)\leq |V(H)|-\nu(H)$ by Corollary \ref{cor:gammah}.

Similarly, for $H\in \cH_k$ such that $\gamma(H)<|E(H)|/2$,
\begin{eqnarray}
\left(\frac{|E(G_n)|}{c}\right)^{-\frac{|E(H)|}{2}}M(G_n, H)\cdot\E(W(H))&\lesssim_H&\left(\frac{|E(G_n)|}{c}\right)^{-\frac{|E(H)|}{2}} \frac{N(G_n, H_S)}{c^{|E(H_S)|}}\nonumber\\
&\lesssim_H&\left(\frac{|E(G_n)|}{c}\right)^{-\frac{|E(H)|}{2}} \frac{N(G_n, H_S)}{c^{|V(H)|-\nu(H)}}\stackrel{\sP}{\rightarrow} 0,
\label{eq:u_h}
\end{eqnarray}
whenever $\omega=|E(G_n)|/c\pto \infty$.

Combining (\ref{eq:w_h}) and (\ref{eq:u_h}) and using Lemma \ref{lm:degree_one} the result follows.
\end{proof}

Note that the above lemmas are true irrespective of whether $c\rightarrow \infty$ or $c$ is fixed. Hence, they will be relevant even when we are dealing with the case $c$ is fixed. In the following lemma, it is shown that the remaining terms in (\ref{eq:zn}) are also negligible when $c\rightarrow \infty$.

\begin{lem}Let $Z_n$ and $W_n$ be as defined in (\ref{eq:z}) and (\ref{eq:w}) respectively. If $c\rightarrow \infty$ and $|E(G_n)|/c\stackrel{\sP}{\rightarrow} \infty$, then for every fixed $k\geq 1$ 
$$|\E (Z_n^k|G_n)-\E (W_n^k|G_n)|\stackrel{\sP}{\rightarrow}0.$$
\label{lm:normal_difference_zero}
\end{lem}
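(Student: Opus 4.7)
By Lemma~\ref{lm:moment_comparison}, it suffices to show that
\[
S_n \;:=\; \Bigl(\tfrac{|E(G_n)|}{c}\Bigr)^{-k/2}\sum_{H\in\cM_k} M(G_n,H)\bigl|\E(W(H))-\E(Z(H))\bigr|\;\pto\; 0.
\]
Since $|\cM_k|$ depends only on $k$, it is enough to show that each $H$-term tends to $0$ in probability. Fix $H\in\cM_k$, so that $\gamma(H)=|E(H)|/2=k/2$ and $d_{\min}(H)\ge 2$. Using $M(G_n,H)\lesssim_H N(G_n,H_S)\lesssim_H |E(G_n)|^{\gamma(H)}=|E(G_n)|^{k/2}$ from Theorem~\ref{th:alon_exponent}, the $H$-term is at most $C(H)\cdot c^{k/2}|\E(W(H))-\E(Z(H))|$, and the task reduces to showing that this quantity tends to $0$ as $c\to\infty$.

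I now split into two cases according to the structure of $H_S$. If $H_S$ is \emph{not} a disjoint union of stars, Corollary~\ref{cor:gammah} gives the strict inequality $\gamma(H)<|V(H)|-\nu(H)$, hence $|V(H)|-\nu(H)>k/2$. Combining Lemma~\ref{lm:leadingorder} with the trivial bound $|E(H_S)|\ge |V(H)|-\nu(H)$ yields $|\E(Z(H))|,|\E(W(H))|\lesssim_H c^{-(|V(H)|-\nu(H))}$, so the $H$-term is $\lesssim_H c^{k/2-(|V(H)|-\nu(H))}\to 0$.

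If instead $H_S$ is a disjoint union of stars, the constraints force $H$ to be the \emph{doubling} $2H_S$, the multigraph obtained by replacing every edge of $H_S$ by a double edge. Indeed, $|E(H_S)|=\gamma(H_S)=k/2=|E(H)|/2$, so the edge multiplicities in $H$ sum to $2|E(H_S)|$, while $d_{\min}(H)\ge 2$ combined with $d_{H_S}(\ell)=1$ at every leaf $\ell$ forces every multiplicity to be at least $2$, hence exactly $2$. For such $H$, the indicators $\{\pmb 1\{Y_u=Y_v\}:(u,v)\in E(H_S)\}$ are mutually independent under the uniform coloring distribution: inside a star component with center $v^*$ and leaves $\ell_1,\ldots,\ell_m$, conditioning on $Y_{v^*}$ makes $\pmb 1\{Y_{v^*}=Y_{\ell_1}\},\ldots,\pmb 1\{Y_{v^*}=Y_{\ell_m}\}$ conditionally i.i.d.\ Bernoulli$(1/c)$, and this joint law is unchanged by averaging over $Y_{v^*}$; across distinct star-components the independence is automatic. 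Since $\pmb 1\{Y_u=Y_v\}$ has the same Bernoulli$(1/c)$ marginal as $X_{(u,v)}$, it follows that
\[
\E(Z(H))=\prod_{(u,v)\in E(H_S)}\E\bigl[(\pmb 1\{Y_u=Y_v\}-\tfrac1c)^2\bigr]=\prod_{(u,v)\in E(H_S)}\E\bigl[(X_{(u,v)}-\tfrac1c)^2\bigr]=\E(W(H)),
\]
so the $H$-term vanishes identically. The main obstacle is precisely this critical case: the individual moment bounds from Lemma~\ref{lm:leadingorder} only give $O(1)$ on this scale, and one is forced to exploit the hidden mutual independence of monochromatic indicators on a star-forest to obtain the exact cancellation that makes the argument go through.
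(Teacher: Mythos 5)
Your proof is correct and follows essentially the same route as the paper's: reduce to $\cM_k$ via Lemma \ref{lm:moment_comparison}, kill the terms with $\gamma(H)<|V(H)|-\nu(H)$ using the bound $c^{k/2-(|V(H)|-\nu(H))}\rightarrow 0$, and identify the surviving multigraphs as disjoint unions of stars with every edge doubled. The only (cosmetic) difference is the final cancellation: the paper obtains $\E(Z(H))=\E(W(H))$ from Lemma \ref{lm:tree_equal}, whereas you rederive it directly from the mutual independence of the edge indicators on a star forest.
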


\begin{proof}Let $\cM_k$ be as defined in Lemma \ref{lm:moment_comparison}, and $\sS_k\subset \cM_k$ be the set of all multi-graphs $H$ with $d_{\min}(H)\ge 2, |E(H)|=k \text{ and } \gamma (H)=|E(H)|/2=|V(H)|-\nu(H)$. By Lemma \ref{lm:moment_comparison} and using $c\rightarrow \infty$ in (\ref{eq:w_h}) and (\ref{eq:u_h}) it follows that only the multi-subgraphs of $G_n$ which are in $\sS_k$ need to considered. 
Now, for $H\in \sS_k$,
$$\gamma(H_S)=\gamma (H)=|V(H)|-\nu(H)=|V(H_S)|-\nu(H_S).$$ Therefore, $H_S$ is a disjoint union of stars by Corollary \ref{cor:gammah}. 
Moreover, $$|E(H_S)|=|V(H_S)|-\nu(H_S)=|V(H)|-\nu(H)=|E(H)|/2.$$ This, along with the fact that $H$ cannot have any vertex of degree 1 
gives that any $H\in \sS_k$ is a disjoint union of stars, where every edge is repeated twice. Now, for any such graph $H$, $\E(Z(H))=\E(W(H))$ by Lemma \ref{lm:tree_equal}, and the result follows.
\end{proof}

\subsubsection{Completing the Proof of Theorem \ref{th:normal}}

To complete the proof, it remains to shows that $W_n$ satisfies the conditions of Lemma \ref{abc}. This is verified in the following lemma:

\begin{lem}\label{obvious2}
Let $W_n$ be as defined in (\ref{eq:w}). Then $W_n\stackrel{\sD}{\rightarrow}N(0,1)$, and further for any $\varepsilon>0$ and $t$ small enough,
\begin{align}
\limsup_{k\rightarrow\infty}\limsup_{n\rightarrow \infty}\P\left(\left|\frac{t^k}{k!}\E(W_n^k|G_n)\right|>\varepsilon\right)=0.
\label{eq:obvious_moment}
\end{align}
\end{lem}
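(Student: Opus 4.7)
The plan is to mimic the argument of Lemma \ref{lm:obvious}. Conditional on $G_n$, the random variables $\{X_{(i,j)}-1/c\}_{(i,j)\in E(G_n)}$ are i.i.d.\ centered Bernoullis, so with the shorthand $m:=|E(G_n)|$ and $p:=1/c$ we have $M(G_n)\mid G_n\sim\mathrm{Bin}(m,p)$, and $W_n=(M(G_n)-mp)/\sqrt{mp}$ is a standardized binomial. Both claims of the lemma then reduce to the standard binomial CLT, applied conditionally and then transferred to an unconditional statement via bounded convergence.

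For the convergence in distribution, I would compute the conditional characteristic function explicitly:
\[
\E(e^{itW_n}\mid G_n)=e^{-it\sqrt{mp}}\,\bigl[1-p+p\,e^{it/\sqrt{mp}}\bigr]^{m}.
\]
Taking logarithms and Taylor expanding (using $p=1/c\to0$ and $mp=m/c\stackrel{\sP}{\to}\infty$), one gets
\[
\log\E(e^{itW_n}\mid G_n)=-\tfrac{1}{2}(1-p)t^{2}+O\!\left(t^{3}/\sqrt{mp}\right)\stackrel{\sP}{\to}-t^{2}/2.
\]
Since the conditional characteristic function is bounded by $1$, dominated convergence gives $\E e^{itW_n}\to e^{-t^{2}/2}$ for every $t\in\R$, and hence $W_n\stackrel{\sD}{\to}N(0,1)$.

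For \eqref{eq:obvious_moment}, I would proceed exactly as in the proof of Lemma \ref{lm:obvious}. Write the central binomial moment via Stirling numbers $S(\cdot,\cdot)$ as
\[
\E\bigl((M(G_n)-mp)^{k}\mid G_n\bigr)=\sum_{j=0}^{k}S(k,j)\,(m)_{j}\,p^{j}\;+\;\text{lower order},
\]
divide by $(mp)^{k/2}$, and identify the leading term: when $p\to0$ and $mp\stackrel{\sP}{\to}\infty$ the $k$-th standardized moment converges in probability to $L_{k}:=\E(N^{k})$ (which is $(2j-1)!!$ for $k=2j$ and $0$ otherwise). Thus $\E(W_n^{k}\mid G_n)\stackrel{\sP}{\to}L_k$ for every fixed $k$. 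Since $\sum_{k\ge 0}(t^{k}/k!)L_k=e^{t^{2}/2}<\infty$ for every $t\in\R$, the individual terms $(t^k/k!)L_k$ tend to $0$ as $k\to\infty$. Combining with the in-probability convergence above (Portmanteau theorem, as in Lemma \ref{lm:obvious}),
\[
\limsup_{n\to\infty}\P\!\left(\Bigl|\tfrac{t^k}{k!}\E(W_n^k\mid G_n)\Bigr|>\varepsilon\right)\le \pmb1\!\left\{\Bigl|\tfrac{t^k}{k!}L_k\Bigr|\ge\varepsilon\right\},
\]
and the right-hand side vanishes for all $k$ large, proving \eqref{eq:obvious_moment}.

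The main technical point is the moment convergence $\E(W_n^k\mid G_n)\stackrel{\sP}{\to}L_k$: although $m=|E(G_n)|$ is itself random, the conditional moment is a fixed polynomial in $(m,p)$, and the in-probability statement is immediate from the deterministic asymptotics for binomial central moments together with the hypothesis $m/c\stackrel{\sP}{\to}\infty$, $c\to\infty$. Everything else is a routine adaptation of the Poisson-mixture argument used to establish Lemma \ref{lm:obvious}.
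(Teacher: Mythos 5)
Your proposal is correct in substance but reaches the conclusion by a genuinely different route from the paper, for both halves of the lemma. For the CLT, the paper applies the Berry--Esseen theorem conditionally to the properly standardized binomial $\overline{W}_n$ and then removes the $(1-1/c)^{1/2}$ discrepancy by Slutsky, whereas you compute the conditional characteristic function and Taylor-expand it; both are valid, and yours has the aesthetic advantage of mirroring the characteristic-function argument of Lemma \ref{lm:obvious} exactly. The more interesting divergence is in the proof of \eqref{eq:obvious_moment}: the paper establishes a uniform exponential moment bound $\sup_n\E(e^{tW_n})<\infty$ for small $|t|$ (this is where the standing reduction $|E(G_n)|\ge c$ from the start of Section \ref{sec:normal} is used), deduces uniform integrability of $W_n^{2k}$, and then controls $\P(|\tfrac{t^k}{k!}\E(W_n^k|G_n)|>\varepsilon)$ by Markov plus Cauchy--Schwarz. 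You instead prove the stronger pointwise statement $\E(W_n^k|G_n)\stackrel{\sP}{\rightarrow}L_k$ and bound the limsup by the indicator $\pmb 1\{|\tfrac{t^k}{k!}L_k|\ge\varepsilon\}$, which vanishes for large $k$ since $\sum_k\tfrac{|t|^k}{k!}L_k<\infty$; this is precisely the Fatou-type scheme of Lemma \ref{lm:obvious}, it needs no exponential moment and no reduction to $|E(G_n)|\ge c$, and it in fact yields \eqref{eq:obvious_moment} for \emph{all} $t$, not just small $t$. One slip worth fixing: the displayed identity $\E((M(G_n)-mp)^k\mid G_n)=\sum_{j\le k}S(k,j)(m)_jp^j+\text{lower order}$ is wrong as written --- the Stirling-number sum is the \emph{raw} moment $\E(M(G_n)^k\mid G_n)$, of order $(mp)^k$, not the central moment, of order $(mp)^{k/2}$. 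The correct justification of $\E(W_n^k\mid G_n)\stackrel{\sP}{\rightarrow}L_k$ is the standard partition expansion of the $k$-th central moment of a sum of $m$ i.i.d.\ centered Bernoullis: a partition with $b$ blocks (each of size $\ge 2$) contributes $O(m^bp^b)$, so after dividing by $(mp)^{k/2}$ only the $(k-1)!!$ pair partitions survive when $p\to0$ and $mp\stackrel{\sP}{\rightarrow}\infty$. With that repair the argument is complete.
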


\begin{proof}
To prove the first conclusion, let
\begin{equation*}
\overline{W}_n:=\frac{M(G_n)-\frac{|E(G_n)|}{c}}{\sqrt{\frac{|E(G_n)|}{c}-\frac{|E(G_n)|}{c^2}}}.
\end{equation*}
By the Berry-Esseen theorem $$\left|\P(\overline{W}_n\le x|G_n)-\Phi(x)\right|\le K\sqrt{\frac{c}{|E(G_n)|}},$$ where $\Phi(x)$ is the standard normal density and $K<\infty$ is some universal constant. Now, the right hand side converges to $0$ in probability, and so $\overline{W}_n$ converges to $N(0,1)$ by the Dominated Convergence theorem. Thus,  $$\overline{W}_n-W_n=\overline{W}_n\left(1-\frac{1}{c}\right)^{1/2}\pto0$$ by Slutsky's theorem, which implies $W_n$ converges in distribution to $N(0, 1)$. 

For the second conclusion, it suffices to show that
\begin{equation}
\sup_{n\geq 1}\E\left(e^{tW_n}\right)<\infty
\label{eq:exponential_moment}
\end{equation}
for $|t|<\delta$ small enough. This is because (\ref{eq:exponential_moment}) implies $\{W_n^{2m}\}_{n \geq 1}$ is uniformly integrable and $\lim_{n\rightarrow\infty}\E(W_n^{2m})=\frac{(2m)!}{2^m m!}$ for all $m\geq 1$. Therefore, by Markov's inequality,
\begin{eqnarray*}\limsup_{k\rightarrow \infty}\limsup_{n \rightarrow\infty}\P\left(\left|\frac{t^k}{k!}\E(W_n^k|G_n)\right|>\varepsilon\right)&\leq & \limsup_{k\rightarrow \infty}\limsup_{n \rightarrow\infty}\frac{1}{\varepsilon}\E\left(\frac{t^k}{k!}|W_n|^k\right)\nonumber\\
&\leq & \limsup_{k\rightarrow \infty}\limsup_{n \rightarrow\infty}\frac{1}{\varepsilon}\frac{t^k}{k!}\E\left(|W_n|^{2k}\right)^{\frac{1}{2}}=0,
\end{eqnarray*}
which verifies (\ref{eq:abc2}).

It remains to verify (\ref{eq:exponential_moment}). Define $\sigma_n^2:=|E(G_n)|/c$, and so $$\E\left(e^{tW_n}|G_n\right)=e^{-\frac{t|E(G_n)|}{\sigma_n c}}\left(1-\frac{1}{c}+\frac{e^{t}}{c}\right)^{|E(G_n)|}:=R_n.$$ Therefore, for any $t\in \R$ we have
\begin{eqnarray*}
\log R_n&=&-\frac{t|E(G_n)|}{\sigma_n c}+|E(G_n)|\log\left(1-\frac{1}{c}+\frac{e^{\frac{t}{\sigma_n}}}{c}\right)\nonumber\\
&\leq &-\frac{t|E(G_n)|}{\sigma_n c}+|E(G_n)|\left(\frac{e^{\frac{t}{\sigma_n}}-1}{c}\right)\nonumber\\
&\le&\frac{|E(G_n)|}{c}\left(\frac{t^2}{2\sigma_n^2}+e^t\frac{|t|^3}{6\sigma_n^3}\right)\le\frac{t^2}{2}+\frac{e^t|t|^3}{6},
\end{eqnarray*}
 where the last two  inequalities follows from Taylor's series and the fact that $|E(G_n)|\ge c$. This implies that  $\E(R_n)< \infty$, as desired.

\end{proof}

%

\section{Normal Limit Theorem for Fixed Number of Colors}
\label{sec:normal_fixed}

The condition $c:=c(n)\rightarrow\infty$ in Theorem \ref{th:normal} is necessary for the universal normality.
The following example demonstrates that $N(G_n)$ is not asymptotically normal, when the number of colors $c$ remains fixed.

\begin{example}
Consider coloring the graph $G_n=K_{2, n}$ with $c=2$ colors.  It is easy to see that
$N(G_n)$ is $2nU_n$ or $n$ with probability $\frac{1}{2}$ each, where $nU_n\sim \dBin(n, 1/2)$. This implies that  $$n^{-\frac{1}{2}}\left(N(G_n)-n\right)\stackrel{\sD}{\rightarrow}\frac{1}{2}N(0,1)+\frac{1}{2}\delta_0,$$
a mixture of a standard normal and point mass at 0.
\label{ex:K2n}
\end{example}

Note that in the previous example the ACF4 condition is not satisfied: $N(G_n, C_4)={n \choose 2}$, and $\lim_{n \rightarrow \infty}\frac{N(G_n, C_4)}{|E(G_n)|^2}=1/8$.

\subsection{The ACF4 Condition}
\label{sec:4cycle}

Recall the ACF4 condition $N(G_n, C_4)=o_P(|E(G_n)|^{2})$, that is, the number of copies of the 4-cycle $C_4$ in $G_n$ is sub-extremal. The following theorem shows this implies that the number of copies of the $g$-cycle $C_g$ in $G_n$ is also sub-extremal, for all $g \geq 3$. The theorem works for both deterministic and random sequence of graphs. For the sake of clarity we write the proof for deterministic graphs, noting that the exact proof goes through if the graphs are random.

\begin{thm}\label{th:4cycle}
The ACF4 condition $N(G_n, C_4)=o_P(|E(G_n)|)^{2}$ is equivalent to the condition that $N(G_n, C_g)=o_P(|E(G_n)|)^{g/2}$, for all $g \geq 3$. 
\end{thm}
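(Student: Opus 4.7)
The reverse direction is immediate: taking $g=4$ in the hypothesis yields ACF4. For the forward direction, my plan is to combine the CLT of Theorem~\ref{th:normal_fixed} with an induction on $g$.

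Under ACF4 with fixed $c\geq 2$, Theorem~\ref{th:normal_fixed} implies that $W_n:=(N(G_n)-|E(G_n)|/c)/\sqrt{|E(G_n)|/c}$ converges in distribution to $N(0,1-1/c)$. The multi-subgraph expansion of Lemma~\ref{lm:moment_comparison}, Alon's extremal estimate $N(G_n,H)\lesssim_H|E(G_n)|^{\gamma(H)}$ (Theorem~\ref{th:alon_exponent}), and the bound $\gamma(H)\leq|E(H)|/2$ for multi-subgraphs with $d_{\min}(H)\geq 2$ (Lemma~\ref{lm:degree_one}) together give uniform bounds on $\mathbb{E}(W_n^k)$, hence uniform integrability and full moment convergence $\mathbb{E}(W_n^g)\to m_g:=\mathbb{E}[N(0,1-1/c)^g]$ for every integer $g\geq 1$.

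Fix $g\geq 3$ and expand
\[
\mathbb{E}(W_n^g\mid G_n)=\left(\frac{|E(G_n)|}{c}\right)^{-g/2}\sum_H M(G_n,H)\,\mathbb{E}(Z(H)),
\]
summed over multi-subgraphs $H$ of $G_n$ with $g$ edges and $d_{\min}(H)\geq 2$ (multi-subgraphs with a leaf contribute zero, as in Lemma~\ref{lm:moment_comparison}). Only $H$ with $\gamma(H)=g/2$ contribute at the leading order $|E(G_n)|^{g/2}$, and by the structural Lemma~\ref{lm:gamma_structural} such $H$'s have a specific form: the $V_{1/2}$-induced subgraph is a disjoint union of simple cycles and doubled edges, while the bipartite $V_0$--$V_1$ part carries a matching saturating $V_0$. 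The unique such $H$ containing a simple $g$-cycle is $H=C_g$ itself, whose normalized contribution equals $\frac{2g(c-1)\,N(G_n,C_g)}{c^{g/2}\,|E(G_n)|^{g/2}}$ via the inclusion-exclusion identity $\mathbb{E}(Z(C_g))=(c-1)/c^g$. All remaining leading-order $H$'s have cycle components $C_{k_i}$ of length $k_i<g$ (or none at all); their contributions factor multiplicatively as products $\prod_i N(G_n,C_{k_i})/|E(G_n)|^{k_i/2}$ (which are $o_P(1)$ by the inductive hypothesis) times universal doubled-matching and multi-star counts (which, collected together, sum to $m_g$ in the limit by a Wick-type identity).

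Passing to the limit, the only $O(1)$ contribution to $\mathbb{E}(W_n^g)-m_g$ left unaccounted is the $C_g$-term, so it must be $o_P(1)$, yielding $N(G_n,C_g)=o_P(|E(G_n)|^{g/2})$ and closing the induction. The base case $g=3$ follows from the same computation since $m_3=0$ and $H=C_3$ is the unique multi-subgraph with $d_{\min}(H)\geq 2$ and $\gamma(H)=3/2$ on $3$ edges. The main obstacle will be verifying that the ``universal'' non-cycle-component contributions sum to exactly $m_g$ in the limit; this amounts to a combinatorial Wick-type identity across the full class of multi-subgraphs structured per Lemma~\ref{lm:gamma_structural}, closed under the inductive treatment of the cycle components.
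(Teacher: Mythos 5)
Your reverse direction is fine, but the forward direction has a fatal circularity within the logical structure of the paper. You invoke Theorem \ref{th:normal_fixed} (ACF4 $\Rightarrow$ CLT with all moments converging) as the engine of the argument, but the paper's proof of the sufficiency half of Theorem \ref{th:normal_fixed} goes through Lemma \ref{lm:count_cycle}, which in its very first case analysis asserts that $N(G_n,C_g)=o_P(|E(G_n)|^{g/2})$ for every $g\geq 3$ ``by assumption'' --- that assumption being exactly the content of Theorem \ref{th:4cycle}, which is why the theorem is stated and proved \emph{before} Section \ref{sec:normal_fixed}. You cannot use the CLT to prove the cycle-count statement when the CLT's proof presupposes it. (For $c=2$ one could in principle import the Rademacher fourth-moment theorem of Nourdin--Peccati--Reinert as an external black box to break the cycle, but that is not what you propose, and it would still leave the issues below.)

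Even setting circularity aside, two steps do not hold up. First, your structural claim that every leading-order $H$ other than $C_g$ ``factors multiplicatively as products of smaller cycle counts times universal doubled-matching counts'' is false: the multi-subgraphs with $d_{\min}(H)\geq 2$ and $\gamma(H)=|E(H)|/2=g/2$ include \emph{connected} graphs with $\gamma(H)>|V(H)|/2$ (e.g.\ a short cycle with doubled pendant edges attached), which are precisely the hard cases handled by the $V_0/V_{1/2}/V_1$ decomposition in Lemma \ref{lm:count_cycle} --- and that lemma again needs sub-extremality of all cycle counts. Only the $|V(H)|=|E(H)|$ graphs decompose into disjoint cycles and doubled edges (Observation \ref{obs:cycle_edge_vertex}). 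Second, the ``Wick-type identity'' showing the non-cycle contributions sum exactly to $m_g$ is the entire content of the cancellation you need, and you have not supplied it. The paper's actual proof avoids all of this: it is a self-contained counting argument. For odd $g$ it writes $N(G_n,C_g)$ in terms of the common-neighbor counts $s_2(a,b)$, uses $\sum_{s_2(a,b)\geq 2}s_2(a,b)^2\lesssim N(G_n,C_4)=o(|E(G_n)|^2)$ to control the pairs with many common neighbors, and splits the sum at the threshold $\varepsilon|E(G_n)|^{1/2}$; for even $g=2h+2$ it inducts on $h$ using the path-multiplicity counts $s_h(a,b)$ and the identity $\sum s_h(a,b)^2\lesssim_h N(G_n,C_{2h})$. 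I would redo the argument along those lines.
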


\begin{proof} 
Let $A(G_n)=((a_{ij}))$ be the adjacency matrix of the graph $G_n$. For any two vertices $a, b\in V(G_n)$, let $s_2(a, b)$ be the number of common neighbors of $a, b$. It is easy to see that 
\begin{equation}
o(|E(G_n)|^2)=N(C_4, G_n)\asymp \sum_{\substack{a, b\in V(G_n)\\ s_2(a, b)\geq 2}}{s_2(a, b)\choose 2}\gtrsim \sum_{\substack{a, b\in V(G_n)\\ s_2(a, b)\geq 2}}s_2(a, b)^2.
\label{eq:2star1}
\end{equation}
Moreover, $\sum_{a, b \in V(G_n)}s_2(a, b)\asymp N(G_n, K_{1, 2})= O(|E(G_n)|^2)$. Finally, observe that for any $m \geq 2$
\begin{equation}
\sum_{\substack{a, b\in V(G_n)\\ s_2(a, b)\geq m}}{s_2(a, b)}\leq \sum_{\substack{a, b\in V(G_n)\\ s_2(a, b)\geq m}}s_2(a, b)^2/m= o(|E(G_n)|^2)/m,
\label{eq:2star2}
\end{equation}
where the last step uses (\ref{eq:2star1}).

Now fix $\varepsilon>0$ and consider the following two cases depending on whether the length of the cycle is even or odd.

\begin{description}
\item[1] Suppose $g=2h+1\geq 3$. It is easy to see that
\begin{equation}
N(G_n, C_g)\lesssim \sum_{i_2, i_3, \ldots, i_g\in V(G_n)}s_2(i_2, i_g)\prod_{j=2}^{g-1}a_{i_{j}i_{j+1}}.
\label{eq:oddcycle}
\end{equation}
Note that for a path $P_{2b+1}$ with $2b+1$ edges  we have $N(G_n, P_{2b+1})=O(|E(G_n)|^{b+1}$, as $\gamma(P_{2b+1})=b+1$ (see Alon \cite[Corollary 1]{alon81}). Then 
\begin{equation}
\sum_{\substack{i_2, i_3, \ldots, i_g\in V(G_n),\\ s_2(i_2, i_g)\leq \varepsilon(|E(G_n)|)^{1/2}}}s_2(i_2, i_g)\prod_{j=2}^{g-1}a_{i_{j}i_{j+1}}\lesssim \varepsilon(|E(G_n)|)^{1/2}N(G_n, P_{2h-1})\lesssim_g \varepsilon |E(G_n)|^{g/2}.
\label{eq:oddcycle1}
\end{equation}
Also, using (\ref{eq:2star2})
\begin{eqnarray}
\sum_{\substack{i_2, i_3, \ldots, i_g\in V(G_n),\\ s_2(i_2, i_g)> \varepsilon(|E(G_n)|)^{1/2}}}s_2(i_2, i_g)\prod_{j=2}^{g-1}a_{i_{j}i_{j+1}}&\leq& \sum_{\substack{i_2, i_g\\s_2(i_2, i_g)\geq \varepsilon |E(G_n)|^{1/2}}}s_2(i_2, i_g)\sum_{i_3, i_4, \ldots, i_{g-2}\in V(G_n)}\prod_{j=3}^{g-2}a_{i_{j}i_{j+1}}\nonumber\\
&\lesssim_g&\varepsilon^{-1}o((|E(G_n)|)^{3/2})N(G_n, P_{2h-3})\nonumber\\
&\lesssim_g& \varepsilon^{-1}o(|E(G_n)|^{g/2}),
\label{eq:oddcycle2}
\end{eqnarray}
where $N(G_n, P_{-1}):=1$ by definition. Combining (\ref{eq:oddcycle1}) and (\ref{eq:oddcycle2}) with (\ref{eq:oddcycle}) it follows that $\lim_{n\rightarrow \infty} N(G_n, C_g)/ |E(G_n)|^{g/2}\lesssim_g\varepsilon$. Since $\varepsilon$ is arbitrary, this implies that $N(G_n, C_g)=o(|E(G_n)|^{g/2})$ for $g$ odd.

\item[2] Suppose $g=2h\geq 6$. The result will be proved by induction on $h$. The base case $h=2$ is true by assumption. Now, suppose $h\geq 2$ and $N(G_n, C_{2h})=o(|E(G_n)|^h)$. For vertices $a, b\in V(G)$, let $s_h(a, b)$ be the number of paths with $h$ edges in $G_n$ with one end point at $a$ and another at $b$. Therefore, as in (\ref{eq:2star1})
\begin{equation}
\sum_{\substack{a, b\in V(G_n)\\ s_h(a, b)\geq 2}}s_h(a, b)^2\lesssim_h N(G_n, C_{2h})=o(|E(G_n)|^h).
\label{eq:2path1}
\end{equation}
Moreover, $\sum_{a, b \in V(G_n)}s_h(a, b)\asymp N(G_n, P_h)= O(|E(G_n)|^h)$. Finally, as in (\ref{eq:2star2}), for any $m \geq 2$
\begin{equation}
\sum_{\substack{a, b\in V(G_n)\\ s_h(a, b)\geq m}}{s_h(a, b)}\leq \sum_{\substack{a, b\in V(G_n)\\ s_h(a, b)\geq m}}s_h(a, b)^2/m= o(|E(G_n)|^h)/m,
\label{eq:2path2}
\end{equation}
where the last step uses (\ref{eq:2path1}).

Now, it is easy to see that
\begin{equation}
N(G_n, C_{2h+2})\lesssim_h \sum_{x, y, u, v\in V(G_n)}s_h(x, y)s_h(u, v)a_{xu}a_{yv}.
\label{eq:evencycle}
\end{equation}
Let $S=\{x, y, u, v\in V(G_n): s_h(x, y)s_h(u, v)\leq \varepsilon|E(G_n)|^{h-1}\}$. Note that
\begin{equation}
\sum_{S}s_h(x, y)s_h(u, v)a_{xu}a_{yv}\leq \varepsilon|E(G_n)|^{h-1} \sum_{x, y, u, v\in V(G_n)}a_{xu}a_{yv}\lesssim \varepsilon|E(G_n)|^{h+1}.
\label{eq:6cycle1}
\end{equation}
Also, using (\ref{eq:2star2})
\begin{eqnarray}
\sum_{\overline S}s_h(x, y)s_h(u, v)a_{xu}a_{yv}&\leq& \sum_{\overline S}\sum_{r} r \pmb 1\{r=s_h(x, y)\} s_h(u, v)\pmb 1_{\left\{s_h(u, v)>\frac{\varepsilon |E(G_n)|^{h-1}}{r}\right\}} \nonumber\\
&\leq &\sum_{x, y}\sum_{r}r \pmb 1\{r=s_h(x, y)\}\sum_{u, v} s_h(u, v)\pmb 1_{\left\{s_h(u, v)>\frac{\varepsilon |E(G_n)|^{h-1}}{r}\right\}}\nonumber\\
&\leq & \sum_{x, y}\sum_{r} r \pmb 1\{r=s_h(x, y)\} \left(\frac{o(|E(G_n)|^h)}{\varepsilon |E(G_n)|^{h-1}/r}\right)\nonumber\\
&\leq & \left(\frac{o(|E(G_n)|)}{\varepsilon}\right) \sum_{x, y}\sum_{r} r^2 \pmb 1\{r=s_h(x, y)\}\nonumber\\
&\lesssim & \left(\frac{o(|E(G_n)|)}{\varepsilon}\right) \sum_{x, y}s_h(x, y)^2=\frac{o(|E(G_n)|^{h+1})}{\varepsilon}.
\label{eq:6cycle2}
\end{eqnarray}
Combining (\ref{eq:6cycle1}) and (\ref{eq:6cycle2}) with (\ref{eq:evencycle}) it follows that $\lim_{n\rightarrow \infty} N(G_n, C_{2h+2})/ |E(G_n)|^{h+1}\lesssim_h \varepsilon$. Since $\varepsilon$ is arbitrary, this implies that $N(G_n, C_{2h+2})=o(|E(G_n)|^{h+1})$. This completes the induction step, and hence completes the proof.
\end{description}
\end{proof}

\begin{remark}The above theorem shows that if the number of 4-cycles in a graph is sub-extremal, then the number of copies of any cycle graph is also sub-extremal. This is an illustration of the fourth moment phenomenon: the convergence of all moments of $N(G_n)$ is implied solely by the convergence of the  fourth moment of $N(G_n)$. In extremal combinatorics of pseudo-random graphs, 4-cycles play a similar role: the classic result of Chung et al. \cite{graham} asserts that if the edge and 4-cycle densities of a graph are as in a binomial random graph, then the graph is essentially pseudo-random and the density of any other subgraph is  like that in a binomial random graph. Recently, Conlon et al. \cite{conlonfoxzhao} proved similar results in the sparse regime.
\end{remark}

Cycle counts in graphs are closely related to the sum of powers of eigenvalues of the adjacency matrix. If $\vec\lambda(G_n)=(\lambda_1(G_n), \lambda_2(G_n),\cdots, \lambda_n(G_n))'$ is the vector eigenvalues of the adjacency matrix $A(G_n)$, then  $\sum_{i=1}^n\lambda_i^g(G_n)$ counts the number of closed walks of length $g$ in the graph $G_n$. Analogous to the ACF4 condition, a sequence of random graphs $\{G_n\}_{n\geq 1}$ is said to satisfy the {\it uniform spectral negligibility (USN) condition} if 
\begin{equation}
\lim_{n\rightarrow\infty}\frac{\max_{i \in [n]}|\lambda_i(G_n)|}{\left(\sum_{j=1}^n\lambda_j^2(G_n)\right)^{\frac{1}{2}}}=0,
\label{eq:spectral_condition}
\end{equation}
If $\pmb \lambda(G_n)=\frac{\vec\lambda(G_n)}{||\vec\lambda(G_n)||_2}$, is the vector of normalized eigenvalues, then USN condition can be rewritten as $\lim_{n\rightarrow\infty}||\pmb \lambda(G_n)||_\infty=0$.

\begin{obs}\label{obs:usn}
If a sequence of graphs satisfies the USN condition, then it also satisfies the ACF4 condition. 
\end{obs}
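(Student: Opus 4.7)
The plan is to follow the same spectral strategy as in Lemma \ref{cycle}, specialized to $g=4$. The key observation is that $N(G_n,C_4)$ can be bounded above by a suitable fraction of $\mathrm{tr}(A(G_n)^4)$, since every closed walk of length $4$ that traces out a $4$-cycle is counted a bounded number of times (in fact, exactly $8$ times) while all other contributions to the trace are nonnegative. This gives
\[
N(G_n,C_4) \;\le\; \tfrac{1}{8}\,\mathrm{tr}(A(G_n)^4) \;=\; \tfrac{1}{8}\sum_{i=1}^n \lambda_i(G_n)^4.
\]

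Next, I would use the trivial bound $\sum_i \lambda_i^4 \le \bigl(\max_i \lambda_i^2\bigr)\sum_i \lambda_i^2$, combined with the identity $\sum_i \lambda_i(G_n)^2 = \mathrm{tr}(A(G_n)^2) = 2|E(G_n)|$ already used in Lemma \ref{cycle}. This yields
\[
N(G_n,C_4) \;\le\; \tfrac{1}{8}\,\|\vec\lambda(G_n)\|_\infty^2 \cdot 2|E(G_n)|,
\]
so the ACF4 condition $N(G_n,C_4)=o_P(|E(G_n)|^2)$ reduces to showing that $\|\vec\lambda(G_n)\|_\infty^2 = o_P(|E(G_n)|)$.

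This final step is immediate from the USN condition (\ref{eq:spectral_condition}): dividing the numerator and the (squared) denominator there by appropriate quantities, USN says exactly $\|\vec\lambda(G_n)\|_\infty / \|\vec\lambda(G_n)\|_2 \to 0$, and squaring together with $\|\vec\lambda(G_n)\|_2^2 = 2|E(G_n)|$ gives $\|\vec\lambda(G_n)\|_\infty^2 = o(|E(G_n)|)$ (or $o_P$ in the random case). Substituting back produces $N(G_n,C_4)=o(|E(G_n)|^2)$, which is the ACF4 condition.

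There is no real obstacle here: the argument is essentially Lemma \ref{cycle} with the constant $\|\vec\lambda\|_\infty^{g-2}$ bounded \emph{sharply} by the hypothesis rather than by the crude $(2|E(G_n)|)^{g/2-1}$. The only small care needed is the remark that if the graphs $G_n$ are random, the USN condition (which is stated as an almost sure limit in (\ref{eq:spectral_condition})) should be interpreted in probability, and then the bound above automatically yields $o_P$ convergence; this is a routine translation and requires no new idea.
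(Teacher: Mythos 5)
Your argument is correct and is essentially the paper's own proof: both bound $N(G_n,C_4)$ by $\frac{1}{8}\operatorname{tr}(A^4)$, then use $\sum_i\lambda_i^4\le \|\vec\lambda\|_\infty^{2}\sum_i\lambda_i^2$ together with $\|\vec\lambda\|_2^2=2|E(G_n)|$ and the USN hypothesis. The only cosmetic difference is that the paper runs the identical estimate for every $g\ge 3$ at once, whereas you specialize to $g=4$, which is all the ACF4 condition requires.
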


\begin{proof}If a sequence of graphs satisfies the USN condition, then for every $g\geq 3$
$$N(G_n, C_g)\lesssim_g \sum_{i=1}^n\lambda_i^g(G_n)\leq \left(\sum_{i=1}^n\lambda_i^2(G_n)\right) ||\vec \lambda(G_n)||_\infty^{g-2}=o(||\vec \lambda(G_n)||_2^g)=o(|E(G_n)|^{g/2}),$$
as $||\vec \lambda(G_n)||_2^2=2|E(G_n)|$.
\end{proof}

More details about the differences between the ACF4 and USN conditions are presented in Section \ref{sec:star}.

\subsection{Proof of Theorem \ref{th:normal_fixed}}

We begin with the following lemma which shows that the ACF4 condition ensures that the counts of any graph with a cycle is also sub-extremal. Recall that if $\varphi$ is an optimal solution of (\ref{eq:gamma}), then $V(H)=V_0(H)\cup V_{1/2}(H)\cup V_1(H)$, where $V_a(H)=\{v\in V(H): \varphi(v)=a\}$, for $a\in \{0, 1/2, 1\}$.

\begin{lem}Let $H$ be a multi-subgraph of $G_n$ with no isolated vertex, $d_{\min}(H)\geq 2$, $\gamma(H)=|E(H)|/2$, and at least 3 vertices in one of its connected components. If the ACF4 condition holds for $G_n$, then
$$M(G_n, H)=o_P(|E(G_n)|^{|E(H)|/2}),$$
whenever the girth $g(H)\geq 3$.
\label{lm:count_cycle}
\end{lem}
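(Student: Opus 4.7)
The plan is to combine the rigid structural consequences of the extremal equality $\gamma(H) = |E(H)|/2$ (together with $d_{\min}(H) \ge 2$) with Theorem \ref{th:4cycle}. Since $g(H) \ge 3$ forces $H$ to have no multiple edges, we have $H = H_S$ and $M(G_n, H) \lesssim_H N(G_n, H_S) = N(G_n, H)$, so it suffices to prove $N(G_n, H) = o_P(|E(G_n)|^{|E(H)|/2})$.

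The key structural input comes from choosing an extreme-point optimizer $\varphi \in \{0, 1/2, 1\}^{V(H)}$ of the linear program (\ref{eq:gamma}) and retracing the inequality chain in the proof of Lemma \ref{lm:degree_one}. The extremality $\gamma(H) = |E(H)|/2$ with $d_{\min}(H) \ge 2$ forces two tight conditions simultaneously: $\varphi(x) + \varphi(y) = 1$ on every edge of $H$, and $\varphi(v) = 0$ whenever $d_H(v) \ge 3$. Writing $V_a := \varphi^{-1}(a)$ for $a \in \{0, 1/2, 1\}$, the first condition rules out all edge types except those within $V_{1/2}$ and those connecting $V_0$ to $V_1$. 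Hence $H$ splits as a vertex-disjoint union $H = H_{1/2} \cup H_{01}$, where $H_{1/2} := H[V_{1/2}]$ is $2$-regular (a disjoint union of cycles, each of length at least $g(H) \ge 3$) and $H_{01} := H[V_0 \cup V_1]$ is bipartite with every vertex of $V_1$ of degree exactly $2$ and every vertex of $V_0$ of degree at least $2$. A quick edge count gives $|E(H)|/2 = |V_{1/2}|/2 + |V_1|$.

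With this decomposition, $N(G_n, H) \lesssim_H N(G_n, H_{1/2}) \cdot N(G_n, H_{01})$. Applying Theorem \ref{th:4cycle} to each cyclic component of $H_{1/2}$ yields $N(G_n, H_{1/2}) = o_P(|E(G_n)|^{|V_{1/2}|/2})$. For the bipartite piece, the feasible assignment $\varphi \equiv 1$ on $V_1$ and $\varphi \equiv 0$ on $V_0$, combined with the upper bound in Lemma \ref{lm:degree_one}, pins down $\gamma(H_{01}) = |V_1|$, so Theorem \ref{th:alon_exponent} gives $N(G_n, H_{01}) \lesssim_H |E(G_n)|^{|V_1|}$. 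The product then yields the desired $o_P(|E(G_n)|^{|E(H)|/2})$ bound whenever $V_{1/2} \ne \emptyset$.

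The main obstacle is the degenerate case $V_{1/2} = \emptyset$, where $H = H_{01}$ is purely bipartite and the Alon bound sits exactly at the extremal order. Here the extra hypothesis that some connected component of $H$ has at least $3$ vertices, combined with $d_{\min}(H) \ge 2$, forces that component to contain a cycle $C_{2k}$ with $k \ge 2$. Viewing $H_{01}$ as the $1$-subdivision of a multigraph $H_{01}'$ on $V_0$ whose edges are indexed by $V_1$, one may write $N(G_n, H_{01}) \lesssim_H \sum_{\alpha} \prod_{w \in V_1} s_2(\alpha(u_w), \alpha(u'_w))$, where $s_2(x, y)$ denotes the number of common neighbors of $x, y$ in $G_n$ and the sum is over injections $\alpha : V_0 \hookrightarrow V(G_n)$. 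The required sub-extremal bound should follow by combining the aggregate $2$-path count $\sum s_2 = O(|E(G_n)|^2)$, the ACF4 estimate $\sum s_2^2 = O(N(G_n, C_4)) = o_P(|E(G_n)|^2)$, and the crude pointwise bound $s_2(x, y) \le |V(G_n)| = O(|E(G_n)|)$ (assuming, without loss of generality, that $G_n$ has no isolated vertices). Executing this iterated estimate cleanly along the structure of $H_{01}'$, in the spirit of the proof of Theorem \ref{th:4cycle}, is the most delicate step.
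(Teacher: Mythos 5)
Your structural analysis in the generic case is correct and in fact sharper than what the paper extracts at the corresponding point. From $\gamma(H)=|E(H)|/2$ together with $d_{\min}(H)\ge 2$, the two tightness conditions you identify ($\varphi(x)+\varphi(y)=1$ on every edge, and $\varphi(v)=0$ whenever $d(v)\ge 3$) do follow by forcing equality throughout the inequality chain of Lemma \ref{lm:degree_one}, and they yield the clean vertex-disjoint splitting $H=H_{1/2}\cup H_{01}$ with $H_{1/2}$ exactly $2$-regular. The paper instead invokes only $\gamma(H)>|V(H)|/2$ and Lemma \ref{lm:gamma_structural}, so it must allow edges between $V_{1/2}(H)$ and $V_0(H)$ and then account for them through the exponents $\lambda(F_i)$; your route avoids that bookkeeping. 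The product bound $N(G_n,H)\lesssim_H N(G_n,H_{1/2})\cdot N(G_n,H_{01})$, Theorem \ref{th:4cycle} applied to each cycle of $H_{1/2}$, and $\gamma(H_{01})=|V_1|$ fed into Theorem \ref{th:alon_exponent} then correctly close the case $V_{1/2}\ne\emptyset$.

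The genuine gap is exactly where you flag it: $V_{1/2}=\emptyset$. There you have reduced to a bipartite $H$ with every vertex of $V_1$ of degree $2$ and every vertex of $V_0$ of degree at least $2$, and you propose to bound $\sum_\alpha\prod_{w\in V_1}s_2(\alpha(u_w),\alpha(u_w'))$ by an iterated truncation in the spirit of Theorem \ref{th:4cycle}. This is not executed, and it is not routine: the auxiliary multigraph $H_{01}'$ on $V_0$ is essentially arbitrary subject to minimum degree $2$, whereas the truncation argument in Theorem \ref{th:4cycle} leans on the linear path/cycle structure to peel off one $s_2$ (or $s_h$) factor at a time; for general $H_{01}'$ a new induction would have to be set up and proved. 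The paper closes this case differently and more cheaply: it picks a cycle $F\cong C_g$ in $H$ (here $g=2k\ge 4$), observes via the Hall-type matching of Lemma \ref{lm:gamma_structural} that $H^-=H\setminus F$ retains a matching saturating $V_0(H^-)$, hence $N(G_n,H^-)\lesssim_H |E(G_n)|^{|V_1(H)|-g/2}=|E(G_n)|^{|E(H)|/2-g/2}$, and multiplies by $N(G_n,C_g)=o_P(|E(G_n)|^{g/2})$ from Theorem \ref{th:4cycle}. You should either carry out that cycle-removal argument or fully execute your $s_2$-estimate; as written the degenerate case is unproven. (A minor caveat: in the paper's usage $g(H)\ge 3$ means $H_S$ contains a cycle, not that $H$ is simple, so your opening claim that the girth hypothesis forces $H=H_S$ is an interpretive choice; the reduction $M(G_n,H)\lesssim_H N(G_n,H_S)$ is the right move either way, but you should say which simple graphs the subsequent argument covers.)
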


\begin{proof} As in the proof of Theorem \ref{th:4cycle} we give the proof for deterministic graphs only. Further, note that it suffices to prove that $N(G_n, H)=o(|E(G_n)|^{|E(H)|/2})$ for any simple subgraph $H$ of $G_n$. This is because for any multi-subgraph $H$ of $G_n$, $M(G_n, H)\lesssim_H N(G_n, H_S)= o(|E(G_n)|^{|E(H_S)|/2})=o(|E(G_n)|^{|E(H)|/2})$, since $|E(H)|\geq |E(H_S)|$.

To begin with assume that $H$ is connected. If $|E(H)|=|V(H)|-1$, the graph $H$ is a tree and $d_{\min}(H)=1$. Also, if $|E(H)|=|V(H)|$, then since $|V(H)|\geq 3$ and $d_{\min}(H)\geq 2$, the only possibility is that $H=C_g$ for some $g\geq 3$, and $N(G_n, H)=o(|E(G_n)|^{|E(H)|/2})$ by assumption.

Therefore, it suffices to consider a subgraph $H$ of $G_n$ such that $|E(H)|>|V(H)|$. If $\gamma(H)\leq |V(H)|/2$, $$N(G_n, H)\lesssim_H |E(G_n)|^{\gamma(H)}\lesssim_H|E(G_n)|^{|V(H)|/2}=o(|E(G_n)|^{|E(H)|/2}).$$ Therefore, assume that $\gamma(H)> |V(H)|/2$. As in the proof of Lemma \ref{lm:gamma_structural}, let $\varphi:V(H)\rightarrow [0, 1]$ be an extreme point of $\sP(H)$ that is the optimal solution linear program defined in (\ref{eq:gamma}). Partition $V(H)=V_0(H)\cup V_{1/2}(H)\cup V_1(H)$, where $V_a(H)=\{v\in V(H): \varphi(v)=a\}$, for $a\in \{0, 1/2, 1\}$. Note that $\gamma(H)> |V(H)|/2$ implies that $\varphi$ is not identically equal to 1/2. Depending upon the size of $V_{1/2}(H)$ the following cases arise:

\begin{description}
\item[$|V_{1/2}(H)|\ne 0$] Decompose $H$ into subgraphs $H_{01}$ and $H_{1/2}$ by Lemma \ref{lm:gamma_structural}. $H_{01}=(V_0(H)\cup V_1(H), E(H_{01}))$ is a bipartite graph where $E(H_{01})$ is set of edges from $V_0(H)$ to $V_1(H)$, has a matching which saturates every vertex in $V_0(H)$. Therefore,
\begin{equation}
N(G_n, H_{01})\lesssim_H |E(G_n)|^{|V_1(H)|}\lesssim_H |E(G_n)|^{|E(H_{01})|/2},
\label{eq:H01}
\end{equation}
since $d_{\min}(H)\geq 2$ implies $|E(H_{01})|\geq 2|V_1(H)|$.
Moreover, the subgraph $F$ of $H$ induced by the vertices of $V_{1/2}(H)$ has a spanning subgraph which is a disjoint union of cycles and isolated edges. Therefore, by Theorem 4 of Alon \cite{alon81}, $\gamma(F)=|V_{1/2}(H)|/2$.
\begin{equation}
N(G_n, F)\lesssim_H |E(G_n)|^{|V_{1/2}(H)|/2}.
\label{eq:H1/2}
\end{equation}
Now, let $F_1, F_2, \ldots, F_{\nu}$ be the connected components of $F$. Denote by $E(V(F), V_0(H))$ the subset of edges in $H$ with one vertex in $V(F_{i})$ and another in $V_0(H)$, for $i\in [\nu]$. Therefore, using estimates (\ref{eq:H01}) and (\ref{eq:H1/2}),
\begin{eqnarray}
\frac{N(G_n, H)}{|E(G_n)|^{|E(H)|/2}}&\lesssim_H & \frac{N(G_n, H_{01})\prod_{i=1}^{\nu}N(G_n, F_i)}{|E(G_n)|^{|E(H)|/2}}\nonumber\\
&\lesssim_H &\prod_{i=1}^{\nu}\frac{|E(G_n)|^{|V(F_i)|/2-|E(F_i)|/2}}{|E(G_n)|^{|E(V(F_{i}), V_{0}(H))|/2}}\nonumber\\
&=&\prod_{i=1}^{\nu}|E(G_n)|^{\lambda(F_i)/2},
\end{eqnarray}
where $\lambda(F_i):=|V(F_i)|-|E(F_i)|-|E(V(F_{i}), V_{0}(H))|$. Note that $|E(V(F_{i}), V_{0}(H))|>0$, since $H$ is connected. Hence, $|E(G_n)|^{\lambda(F_i)/2}=o(1)$ whenever $|V(F_i)|\leq |E(F_i)|$. Otherwise $F_i$ is a tree and has at least 2 vertices of degree 1. The degree 1 vertices must be connected to some vertex in $H_0$, which implies that $|E(V(F_{i}), V_{0}(H))|\geq 2$ and again $|E(G_n)|^{\lambda(F_i)/2}=o(1)$.

\item[$|V_{1/2}(H)|=0$] In this case, $V(H)=V_0(H)\cup V_1(H)$, that is every vertex is assigned the value 0 or 1 by the optimal function $\varphi$. By Lemma \ref{lm:degree_one}, for every vertex $v\in V_1(H)$, $d(v)=2$. Therefore, $|E(H)|=2|V_1(H)|=2\gamma(H)$ and the graph $H$ is bi-partite. By Lemma \ref{lm:gamma_structural} $H$ then has a matching which saturates every vertex in $V_0(H)$. By assumption, the girth $g:=g(H)\geq 3$ and let $F$ be a subgraph of $H$ which is isomorphic to $C_g$. Define
$H^-:=(V(H^-), E(H^-))$ where  $V(H^-)=V(H)\backslash V(F)$ and $E(H^-)=E(H)\backslash E(F)$.
Now, let $A\subset V_0(H^-):= V_0(H)\backslash F$. By the saturating matching in $H$, $|N_H(A)|\geq |A|$. Also, observe that $|N_{H^-}(A)|=|N_H(A)|$, since removing $F$ from $H$ leaves the vertices in $A$ and its neighbors unchanged. Therefore, $|N_{H^-}(A)|\geq |A|$ for all $A\subset V_0(H^-)$, that is, even after removing the cycle $F$ from $H$, there is a matching in $H^{-}$ which saturates every vertex in $V_0(H^-)$. This implies, $$N(G_n, H^-)\lesssim_H|E(G_n)|^{|V(H^-)|}.$$
Now, as $|V(H^-)|=|V_1(H)|-g/2=|E(H)|/2-g/2$ and $|E(H^-)|=|E(H)|-g$,
$$\frac{N(G_n, H)}{|E(G_n)|^{|E(H)|/2}}\lesssim_H\frac{N(G_n, F)N(G_n, H^-)}{|E(G_n)|^{|E(H)|/2}}\lesssim_H\frac{N(G_n, F)}{|E(G_n)|^{g/2}}=o(1).$$

\end{description}
Finally, if $H$ is not connected, let $H_1, H_2, \ldots, H_r$ be the connected components of $H$. There exists $j \in [r]$ such that $d_{\min}(H_j)\geq 2$ and $g(H_j)\geq 3$. By the applying the above argument for $H_j$, it follows that $N(G_n, H_j)=o(|E(G_n)|^{|E(H_j)|/2})$.
Also, for all $i \in [r]$, $N(G_n, H_i)\lesssim_H |E(G_n)|^{\gamma(H_i)}\leq |E(G_n)|^{|E(H_i)|/2}$, by Lemma \ref{lm:degree_one}. This implies,
$$N(G_n, H)\lesssim_H\prod_{i=1}^r N(G_n, H_i)=o(|E(G_n)|^{|E(H_j)|/2})O(|E(G_n)|^{(|E(H_i)|2})=o(|E(G_n)|^{|E(H)|/2}),$$
and the result follows.
\end{proof}

The above result combined with Lemma \ref{lm:moment_comparison} shows that the conditional moments of 
$Z_n$ and $W_n$, as defined in (\ref{eq:z}) and (\ref{eq:w}), are close whenever the ACF4 condition holds in probability.

\begin{lem}
Let $c$ be fixed and $\{G_n\}_{n\geq 1}$ be sequence of random graphs for which the ACF4 condition in probability holds. With $Z_n$ and $W_n$ as defined in (\ref{eq:z}) and (\ref{eq:w}), for every fixed $k\geq 1$
$$|\E (Z_n^k|G_n)-\E (W_n^k|G_n)|\stackrel{\sP}{\rightarrow}0.$$
\label{lm:normal_difference_zero_c_fixed}
\end{lem}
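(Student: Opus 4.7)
The plan is to combine Lemma \ref{lm:moment_comparison} with a term-by-term analysis of the reduced sum over $\cM_k$. By Lemma \ref{lm:moment_comparison}, it suffices to prove that for every fixed $H\in\cM_k$ (those multi-subgraphs of $G_n$ with $|E(H)|=k$, $d_{\min}(H)\ge 2$, and $\gamma(H)=|E(H)|/2$),
$$\left(\frac{|E(G_n)|}{c}\right)^{-k/2} M(G_n,H)\cdot\bigl|\E(W(H))-\E(Z(H))\bigr|\stackrel{\sP}{\rightarrow}0.$$
Since $c$ is fixed and only finitely many isomorphism classes in $\cM_k$ appear, it is enough to verify this contribution for each $H$ separately. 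I would split $\cM_k$ into two mutually exclusive types, according to whether or not every connected component of $H$ has exactly two vertices.

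\emph{Case 1: every component of $H$ has exactly two vertices.} If $H_i$ is such a component with $m$ parallel edges between its two vertices, then $H_{i,S}$ is a single edge and $\gamma(H_i)=\gamma(H_{i,S})=1$. Summing over components and using the additivity $\gamma(H)=\sum_i\gamma(H_i)$ inherited from \eqref{eq:gamma}, the constraint $\gamma(H)=|E(H)|/2$ forces $m=2$ for each component. Thus $H$ is a disjoint union of double edges, $H_S$ is a matching, and in particular $H_S$ is a forest. Applying Lemma \ref{lm:tree_equal} to each connected component, together with the mutual independence of the $Y_i$'s across disjoint vertex sets, gives $\E(Z(H))=\E(W(H))$ exactly, so this $H$ contributes zero.

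\emph{Case 2: some component $H_i$ has at least three vertices.} A connected graph on at least three vertices with minimum degree at least two necessarily contains a cycle, so $g(H_i)\ge 3$. Hence $H$ satisfies the hypotheses of Lemma \ref{lm:count_cycle}, and under the ACF4 condition in probability that lemma yields $M(G_n,H)=o_P(|E(G_n)|^{|E(H)|/2})$. Because $c$ is fixed and $|\E(W(H))-\E(Z(H))|$ is a constant depending only on $H$ and $c$, the displayed quantity is $o_P(1)$ as required. The main obstacle is the borderline Case 1: these are precisely the $H\in\cM_k$ for which the prefactor $(|E(G_n)|/c)^{-k/2}M(G_n,H)$ is \emph{not} automatically negligible, and the argument goes through only because the two central-moment expectations cancel exactly. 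It is this exact cancellation, supplied by Lemma \ref{lm:tree_equal}, that pins the variance of $N(G_n)$ to that of its independent-Bernoulli surrogate $M(G_n)$ and drives the CLT in Theorem \ref{th:normal_fixed}.
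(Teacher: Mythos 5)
Your reduction to $\cM_k$ via Lemma \ref{lm:moment_comparison} and your treatment of Case 1 are both fine, but the dichotomy you use to split $\cM_k$ is wrong, and Case 2 contains a genuine error. The claim that a connected component with at least three vertices and minimum degree at least two must contain a cycle is true for simple graphs but false for multigraphs: a doubled star (say $K_{1,2}$ with both edges doubled) has three vertices, $d_{\min}=2$, $|E(H)|=4$ and $\gamma(H)=\gamma(K_{1,2})=2=|E(H)|/2$, so it lies in $\cM_4$, yet its underlying simple graph is a tree and its girth is not $\geq 3$. Lemma \ref{lm:count_cycle} therefore does not apply to it --- its conclusion is asserted only ``whenever the girth $g(H)\geq 3$'' --- and in fact the conclusion is false for such $H$: taking $G_n=K_{1,n}$, which satisfies the ACF4 condition vacuously since it has no cycles, one gets $M(G_n,H)\asymp N(G_n,K_{1,2})={n\choose 2}\asymp|E(G_n)|^{2}=|E(G_n)|^{|E(H)|/2}$, which is not $o_P(|E(G_n)|^{|E(H)|/2})$. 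So for these graphs the prefactor is genuinely not negligible, and your Case 2 argument breaks down on a nonempty subclass of $\cM_k$.

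The correct split --- the one the paper uses --- is by whether $H_S$ contains a cycle: if $g(H)\geq 3$, apply Lemma \ref{lm:count_cycle}; otherwise $H_S$ is a forest and Lemma \ref{lm:tree_equal} gives the exact cancellation $\E(Z(H))=\E(W(H))$, irrespective of how many vertices the components have. Your proof is repaired by moving every $H\in\cM_k$ whose underlying simple graph is a forest (doubled stars being the typical example, as identified in the proof of Lemma \ref{lm:normal_difference_zero}) into Case 1; the cancellation argument you already give there covers them verbatim.
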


\begin{proof} From (\ref{eq:zn}),
$$\E(Z_n^k|G_n)-\E(W_n^k|G_n)=\left(\frac{|E(G_n)|}{c}\right)^{-\frac{k}{2}}\sum_{\substack{H\in \cH_k\\ }}M(G_n, H)\left(\E(Z(H))-\E(W(H))\right).$$
By Lemma \ref{lm:moment_comparison}, in the above sum it suffices to only consider graphs in $\cM_k$, that is, the set of all multi-graphs with exactly $k$ multi edges, no isolated vertex, $d_{\min}(H)\ge 2$, and  $\gamma (H)=|E(H)|/2$. 

If $H\in \cM_k$ is such that the girth $g(H)\geq 3$, then by Lemma \ref{lm:count_cycle} $M(G_n, H)=o_P(|E(G_n)|^{k/2})$. Therefore, the only multi-subgraphs $H\in \cM_k$ which remain must be such that $H_S$ is a tree.
But, by Lemma \ref{lm:tree_equal}, $\E(Z(H))=\E(W(H))$ for all such multi-subgraphs, and the result follows.
\end{proof}

\subsubsection{Completing the Proof of Theorem \ref{th:normal_fixed}}
%
Define $S_n:=(1-1/c)^{-\frac{1}{2}}W_n$, where $W_n$ is as defined before. Then $S_n\stackrel{\sD}{\rightarrow}N(0,1)$ by the standard central limit theorem. Moreover, by a direct application of Lemma \ref{obvious2}
\begin{align}
\limsup_{k\rightarrow\infty}\limsup_{n\rightarrow \infty}\P\left(\left|\frac{t^k}{k!}\E(W_n^k|G_n)\right|>\varepsilon\right)=0.
\end{align}
Therefore, $Z_n\stackrel{\sD}\rightarrow N(0, 1)$ by Lemma \ref{abc}.

To prove the necessity, assume that $\frac{N(G_n, C_4)}{|E(G_n)|^2}\stackrel{\sP}\nrightarrow 0$. Therefore, $\limsup_{n\rightarrow\infty}\E\left(\frac{N(G_n, C_4)}{|E(G_n)|^2}\right)> 0$.

Let $\sT_4$ be the collection of all multi-graphs with 4 edges and no vertex of degree 1. By Lemma \ref{lm:degree_one},
\begin{eqnarray}
\E(Z_n^4|G_n)=\left(\frac{|E(G_n)|}{c}\right)^{-2}\sum_{H \in \sT_4}M(G_n, H)\E(Z(H))
\label{lm:fourth_moment}
\end{eqnarray}

Now, it is easy to see that $\sT_4$ consists of 5 multi-graphs, which are shown in Figure \ref{fig:fourth_moment_graphs}.

\begin{figure*}[h]
\centering
\begin{minipage}[c]{1.0\textwidth}
\centering
\includegraphics[width=4.75in]
    {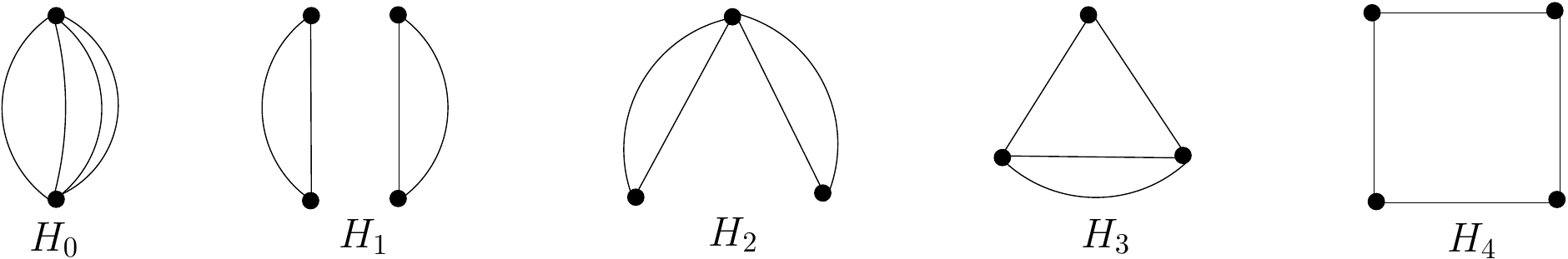}\\
\end{minipage}
\caption{The multi-graphs which arise in the fourth moment calculation}
\label{fig:fourth_moment_graphs}
\end{figure*}

Note that $N(G_n, H_0)\lesssim_H |E(G_n)|=o(|E(G_n)|^2)$ and $N(G_n, H_3)\lesssim_H |E(G_n)|^{\gamma(H_3)}=|E(G_n)|^{3/2}=o(|E(G_n)|^2)$. Also, by direct calculations
$$\E(Z(H_1))=\E(Z(H_2))=\frac{1}{c^2}\left(1-\frac{1}{c}\right)^2, \quad \E(Z(H_4))=\frac{1}{c^3}\left(1-\frac{1}{c}\right).$$
Finally, $M(G_n, H_1)+M(G, H_2)= {|E(G_n)|\choose 2}\frac{4!}{2^2}$, which is obtained by choosing 2 out of the $|E(G_n)|$ edges and then permuting the $4$ edges (each chosen edge doubled) within themselves. By substituting in (\ref{lm:fourth_moment}),
\begin{eqnarray}
\E(Z_n^4)=\E\E(Z_n^4|G_n)=3\left(1-\frac{1}{c}\right)^2+\frac{1}{c}\left(1-\frac{1}{c}\right) \E\left(\frac{N(G_n, C_4)}{|E(G_n)|^2}\right).
\label{lm:fourth_moment_II}
\end{eqnarray}
By taking limits, $\limsup_{n\rightarrow\infty}\E(Z_n^4)>3\left(1-\frac{1}{c}\right)^2$.
%
Now, since $|\E Z(H)-\E W(H)|\le 1$ and $M(G_n,H)\lesssim_H |E(G_n)|^{\gamma(H)}$ it follows that
$$|\E(Z_n^8|G_n)-\E(W_n^8|G_n)|\lesssim_k |E(G_n)|^{\gamma(H)-|E(H)|/2}.$$
By Lemma \ref{lm:degree_one}, the last term is $O(1)$ as  $\gamma(H)=|E(H)|/2$, for all $H\in \mathcal{M}_k$, and so
\begin{equation}
\E(Z_n^8|G_n)\le \E(W_n^8|G_n)+C,
\label{eq:eighth_moment}
\end{equation} for a positive finite constant $C$. Finally, since
$\E e^{tW_n}<\infty$ for all $t\in \R$ (Lemma \ref{obvious2}), it follows that $\sup_{n\ge 1}\E W_n^8<\infty$, and so $\sup_{n\ge 1}\E Z_n^8<\infty$ from (\ref{eq:eighth_moment}). Now, suppose $Z_n\dto Z\sim N(0, 1-1/c)$. Since $\sup_{n\ge 1}\E Z_n^{8}<\infty$,  $Z_n^{4}$ is uniformly integrable. But then the convergence in law implies that $\E Z_n^{4}$  converges to $\E Z^4=3(1-1/c)^2$, which is a contradiction.

\subsection{Connections between the ACF4 and USN Conditions}
\label{sec:star}

Recall that for the case $c=2$, $N(G_n)$ can be rewritten as a quadratic form in terms of the adjacency matrix $A(G_n)$. In this case, the classical sufficient condition (\ref{eq:quadratic_form_condition}) for asymptotic normality of $V_n=\sum_{i\leq j}a_{ij}X_iX_j$, for $X_1, X_2, \ldots X_n$ i.i.d. with zero mean and finite fourth moment, can be re-written as:
\begin{equation}
\lim_{n\rightarrow \infty} \sigma_n^{-4}\E(V_n-\E(V_n))^4=3, \text{ and } \lim_{n\rightarrow \infty}\sigma_n^{-2} \max_{i \in [n]}\sum_{j=1}^n a_{ij}^2=\lim_{n\rightarrow \infty}\frac{\Delta(G_n)}{|E(K_{1, n})|}=0,
\label{eq:quadratic_form_graph}
\end{equation}
where $\Delta(G_n)$ is the maximum degree of a vertex in $G_n$. Moreover, from the proof of the necessity part of Theorem \ref{th:normal_fixed} in the previous section, it can be seen that the ACF4 condition is equivalent to the first condition in (\ref{eq:quadratic_form_condition}). 

For the case when  $X_1, X_2, \ldots X_n$ are i.i.d. Rademacher variables, Nourdin et al. \cite{rademacher}  showed that the second condition in (\ref{eq:quadratic_form_graph}) is not needed for asymptotic normality. Error bounds were also proved by Chatterjee \cite{chatterjee_normal_approximation} using Stein's method, under the conditions 
\begin{equation}
\lim_{n\rightarrow \infty} ||\pmb\lambda(A(G_n))||_4^4=0 \text{ and } \lim_{n\rightarrow \infty}\sigma_n^{-2} \max_{i \in [n]}\sum_{j=1}^n a_{ij}^2=0.
\label{eq:quadratic_form_condition_II}
\end{equation}
It is easy to see the first condition in (\ref{eq:quadratic_form_condition_II}) is equivalent to the USN condition $\lim_{n\rightarrow \infty} ||\pmb\lambda(A(G_n))||_\infty=0$.

Even though the number of cycles in a graph is closely related to the power sum of eigenvalues, there are subtle differences between ACF4 and USN conditions. To this end, consider the following example:

\begin{example} Consider the star graph $K_{1, n}$ with vertices indexed by $\{0, 1, \ldots, n\}$, with the central vertex labeled 0. It is easy to see that $K_{1, n}$ does not satisfy the USN condition: 
$$\vec \lambda(A(K_{1, n}))=(n^{\frac{1}{2}}, 0, \ldots, 0, -n^{\frac{1}{2}})', \text{ and } \lim_{n\rightarrow \infty} ||\pmb\lambda(A(K_{1, n}))||_\infty=\frac{1}{\sqrt{2}}\ne 0.$$ 
Moreover, $\Delta(K_{1, n})/|E(K_{1, n})|=1$, that is,  the second condition in (\ref{eq:quadratic_form_condition}) and (\ref{eq:quadratic_form_condition_II}) is also not satisfied by $K_{1, n}$. Therefore, asymptotic normality of $N(K_{1, n})$ does not follow from results of de Jong \cite{dejong} or Chatterjee \cite{chatterjee_normal_approximation}.

However, $K_{1, n}$ has no cycles and so the ACF4 condition is satisfied. Therefore, by Theorem  \ref{th:normal_fixed} (also by Nourdin et al. \cite{rademacher}) it follows that $N(K_{1, n})$ is asymptotically normal. In fact, for $\vec X=(X_0, X_1, X_2, \ldots X_n)'$ i.i.d. Rademacher 
$$n^{-\frac{1}{2}}\vec X' A(K_{1, n})\vec X=X_0\left(\frac{\sum_{i=1}^n X_i}{\sqrt n}\right)\dto X_0\cdot Z\sim N(0, 1),$$
where $Z\sim N(0, 1)$.

Consider on the other hand, $\vec Z=(Z_0, Z_1, Z_2, \ldots Z_n)'$ i.i.d. $N(0, 1)$. Then 
$$n^{-\frac{1}{2}}\vec Z A(K_{1, n})\vec Z=Z_0\left(\frac{\sum_{i=1}^n Z_i}{\sqrt n}\right)\stackrel{\sD}= Z_0 S_0$$
where $Z_0, S_0\sim N(0, 1)$, which is not normally distributed. This is expected because (\ref{eq:quadratic_form_condition}) is necessary and sufficient when each vertex in the graph is assigned an independent normal random variable.
\end{example}

\subsection{Examples}

Some examples where the ACF4 condition is satisfied and asymptotic normality holds are illustrated below.

\begin{example}(Random Regular Graphs) $\sG_n$ consists of the set all $d$-regular graphs on $n$ vertices and sampling is done uniformly on this space. In this case, $|E(G_n)|=nd/2$, for all $G_n\in \sG_n$. Theorem \ref{th:normal} gives
\begin{equation}
\frac{N(G_n)-\frac{1}{c}\left(\frac{nd}{2}\right)}{\sqrt{\frac{nd}{2c}\left(1-\frac{1}{c}\right)}} \stackrel{\sD}\rightarrow N(0, 1) \text{ when } \frac{nd}{c}\rightarrow \infty.
\label{eq:dregular_normal}
\end{equation}

Moreover, for all $G_n\in \sG_n$, $\lambda_{\max}(G_n)=d$ and $\sum_{i=1}^n\lambda_i(G_n)^2=2|E(G_n)|=nd$. This implies that $||\pmb \lambda_n||_\infty=O(\sqrt{d/n})=o(1)$, whenever $d=o(n)$.
Therefore, by Observation \ref{obs:usn} and Theorem \ref{th:normal_fixed}, (\ref{eq:dregular_normal}) holds even when $c$ is fixed and $d=o(n)$.
\end{example}

\begin{example}(Hypercube) An important $d$-regular graph which is neither sparse nor dense is the hypercube $Q_n=(V(Q_n), E(Q_n))$, where $V(Q_n)=\{0, 1\}^{\log_2 n}$, where $n=2^s$ for some $s\geq 2$, and there exists an edge between two vertices if the corresponding binary vectors have Hammimg distance 1. This is a $d$-regular graph with $d=\log_2 n$ and $|E(Q_n)|=\frac{1}{2}n \log_2 n$. Therefore,
$||\pmb \lambda_n||_\infty=O(\sqrt{d/n})=o(1)$ and by the previous example,
 \begin{equation}
\frac{N(G_n)-\frac{1}{c}\left(\frac{n\log_2n}{2}\right)}{\sqrt{\frac{n\log_2 n}{2c}\left(1-\frac{1}{c}\right)}} \stackrel{\sD}\rightarrow N(0, 1) \text{ when } \frac{n \log_2 n}{c}\rightarrow \infty.
\label{eq:hypercube_normal}
\end{equation}
irrespective of whether $c\rightarrow \infty$ or $c$ is fixed.
\end{example}

\begin{example}(Sparse Inhomogeneous Random Graphs) A general model for sparse random graphs is the following: every edge $(i, j)$ is present independently with probability $\frac{1}{n}\cdot f(\frac{i}{n}, \frac{j}{n})$, for some symmetric continuous function $f:[0,1]^2\rightarrow [0,1]$ (see Bollobas et al. \cite{bollobasjansonriordan}). By the law of large numbers,
$$\frac{1}{n}|E(G_n)|\stackrel{\sP}{\rightarrow} \frac{1}{2}\int_0^1\int_0^1f(x, y)dxdy,$$
and it is easy to see that 
$$\lim_{n\rightarrow \infty}\E N(G_n, C_4)=\frac{1}{2} \int_{[0, 1]^4}f(w, x)f(x, y)f(y, z)f(z, x)dwdxdydz.$$
Therefore by Markov's inequality   $$\frac{N(G_n, C_4)}{|E(G_n)|^2}\stackrel{\sP}{\rightarrow} 0$$ and so 
by Theorem \ref{th:normal} and Theorem \ref{th:normal_fixed} we have 
\begin{equation}
\frac{N(G_n)-\frac{|E(G_n)|}{c}}{\sqrt{\frac{n}{c}\left(1-\frac{1}{c}\right)}} \stackrel{\sD}\rightarrow N\left(0, \frac{1}{2}\int_0^1\int_0^1f(x, y)dxdy\right) \text{ when } \frac{n}{c}\rightarrow \infty,
\label{eq:dregular_normal}
\end{equation}
irrespective of whether $c\rightarrow \infty$ or $c$ is fixed. Note that this model includes as a special case the Erd\H os-Renyi random graphs $G(n, \lambda/n)$, by taking the function $f(x, y)=\lambda$.
\end{example}

\section{Universal Non-Normal Limit for Dense Graphs}
\label{sec:chisquare}

The precise conditions required for the normality of the number of monochromatic edges $N(G_n)$ have been determined in the previous sections. It is also in Example \ref{ex:K2n} that when these conditions are not met, $N(G_n)$ might have non-standard limiting distributions. However, in this section it will be shown that there is a universal characterization of the limiting distribution of $N(G_n)$ for a converging sequence of dense graphs $G_n$.

To this end, consider the following example where the limiting distribution of the number of monochromatic edges is determined for a complete graph.

%
%

\begin{example}(Complete Graph) Consider coloring the complete graph $K_n$ with $c=2$ colors under the uniform distribution, where $c$ is fixed.  Let $N(K_n)$ be the number of monochromatic edges of the complete graph $K_n$. Let $U_n$ be the proportion of vertices of $K_n$ with color $1$. Then 
$$nU_n\sim \mathrm{Binomial}(n,1/2), \text{ and } n^{1/2}(U_n- 1/2)\stackrel{\sD}\rightarrow N(0, 1/4).$$
In this case, we have $N(K_n)={nU_n\choose 2}+{n-nU_n\choose 2}=\frac{n^2}{2}\left(U_n^2+(1-U_n)^2\right)-\frac{n}{2}$,
and so
$$\frac{1}{n}\left(N(K_n)-\frac{1}{2}{n \choose 2}\right)=\frac{n}{2}\left(U_n^2+(1-U_n)^2-\frac{1}{2}\right)-\frac{1}{4}\stackrel{\sD}\rightarrow \frac{1}{4}(\chi^2_{(1)}-1),$$
where the last convergence follows by an application of Delta method to the function $f(x)=x^2+(1-x)^2$, and noting that $f'(1/2)=0, f''(1/2)=4.$
\label{ex:completegraph}
\end{example}

This example motivates the characterization of the limiting distribution for any converging sequence of dense graphs. The limit theory of dense graphs was developed recently by Lov\'asz and coauthors \cite{graph_limits_I,graph_limits_II,lovasz_book}. Using results from this limit theory, the limiting distribution of $N(G_n)$ can be characterized for any dense graph sequence $G_n$ converging to a limit $W\in\sW$.

\subsection{Proof of Theorem \ref{th:chisquare}}

Write $$N(G_n)-\frac{|E(G_n)|}{c}=\sum_{(i, j)\in E(G_n)}\sum_{a=1}^c\left(\pmb1\{Y_i=a\}-\frac{1}{c}\right)\left(\pmb1\{Y_j=a\}-\frac{1}{c}\right).$$
As before, Theorem \ref{th:chisquare} will be proved by comparing the conditional moments of 
\begin{equation}\label{eq:gamma_n}
\Gamma_n=\frac{N(G_n)-\frac{|E(G_n)|}{c}}{\sqrt{2|E(G_n)|}}
\end{equation}
with another random variable for which the asymptotic distribution can be obtained much easily.
To this end, define the random variable $$Q(G_n):=\sum_{(i, j)\in E(G_n)}\sum_{a=1}^cS_{ia}S_{ja},$$ with $S_{va}=X_{va}-\overline{X}_{v.}$, where $\{X_{va}:v\in V(G_n), a\in [c]\}$ is a collection of independent $N(0,1/c)$ random variables and $\overline{X}_{v.}=\frac{1}{c}\sum_{a=1}^cX_{va}$.
Note that for each $v\in V(G_n)$ the vector $\vec S_v:=(S_{v1}, S_{v2}, \ldots, S_{vc})'$ is a multivariate normal conditioned on $\sum_{a=1}^cS_{va}=0$. Also, $\{\vec S_v, v\in V(G_n)\}$ are independent and identically distributed random variables.

Finally, define \begin{equation}\label{eq:tn}\Delta_n:=\frac{Q(G_n)}{\sqrt{2|E(G_n)|}}.
\end{equation}

\subsection{Comparing Conditional Moments}

The moments of $\Gamma_n$ and $\Delta_n$ involve sum of multi-subgraphs of $G_n$. We begin with a simple observation about general multi-graphs.

\begin{obs}
Let $H=(V(H), E(H))$ be any finite multigraph with $d_{\min}(H)\geq 2$ and $|V(H)|=|E(H)|$. Then $H$ is a disjoint union of cycles and isolated doubled edges.
\label{obs:cycle_edge_vertex}
\end{obs}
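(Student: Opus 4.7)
The plan is to combine the handshake lemma with the hypothesis $d_{\min}(H)\geq 2$ to force every vertex to have degree exactly $2$, and then argue that connected $2$-regular multigraphs are cycles (possibly of length $2$, which is precisely a doubled edge).

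First I would note that $\sum_{v\in V(H)} d(v) = 2|E(H)| = 2|V(H)|$ by assumption, so the average degree is exactly $2$. Since $d(v)\geq 2$ for every $v$, this forces $d(v)=2$ for all $v\in V(H)$. In particular, each connected component $H'$ of $H$ also satisfies $d_{\min}(H')\geq 2$ and has $|V(H')|=|E(H')|$ (the identity is preserved componentwise once equality holds globally).

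Next I would reduce to the connected case: it suffices to show that any connected multigraph in which every vertex has degree exactly $2$ is either a cycle $C_g$ with $g\geq 3$ or a doubled edge on two vertices. Pick a spanning tree $T$ of the component, which has $|V(H')|-1$ edges. Adding the remaining (unique) extra edge of $H'$ to $T$ creates exactly one cycle $C\subseteq H'$. If $H'$ contained any edge outside $T\cup\{e\}$, we would contradict $|E(H')|=|V(H')|$. If $C$ were a proper subgraph of $H'$, then some vertex of the tree lying off $C$ would be a leaf of the tree with no additional incident edge in $H'$, hence have degree $1$, contradicting $d_{\min}(H')=2$. Therefore $H'=C$, which is either an ordinary cycle of length $\geq 3$ or, in the multigraph sense, a pair of parallel edges on two vertices (i.e.\ an isolated doubled edge).

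Since every connected component falls into one of these two categories, $H$ is a disjoint union of cycles and isolated doubled edges. There is no real obstacle here; the only subtle point is remembering that in the multigraph setting a ``cycle of length $2$'' is exactly a doubled edge, which is why the statement separates these two cases.
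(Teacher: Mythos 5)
Your proof is correct, and it takes a genuinely different route from the paper's. You begin with the handshake lemma, $\sum_{v\in V(H)} d(v) = 2|E(H)| = 2|V(H)|$, which together with $d_{\min}(H)\geq 2$ forces $H$ to be exactly $2$-regular; you then classify connected $2$-regular multigraphs via a spanning-tree argument. The paper never extracts $2$-regularity. Instead it works component by component with the underlying simple graph $F_S$ of a component $F$: connectivity forces $|E(F_S)|\in\{|V(F)|,\,|V(F)|-1\}$, and the two cases give respectively a connected simple graph with $|E|=|V|$ and minimum degree $2$ (hence a cycle), or a tree plus one parallel edge, which can raise the degree of both leaves of the tree only if the tree is a single edge (hence a doubled edge). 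Your handshake step is arguably the cleaner entry point, since once $2$-regularity is in hand the componentwise identity $|V(H')|=|E(H')|$ is automatic and the classification of connected $2$-regular multigraphs is standard. The one place your sketch is slightly loose is the claim that a vertex off the unique cycle $C$ yields a degree-$1$ vertex: strictly, one should say that the subtree of $T$ hanging off $C$ at such a vertex contains a leaf of $T$ that is not an endpoint of the extra edge $e$ (both endpoints of $e$ lie on $C$), and that leaf has degree $1$ in $H'$. Alternatively, with $2$-regularity established you could skip the spanning tree entirely: starting at any vertex and repeatedly leaving by the unused edge traces a closed walk that, by connectivity, must exhaust the component.
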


\begin{proof}
Let $H_1, H_2\ldots, H_\nu$ be the connected components of $H$. Note that if there exists $i \in [\nu]$ such that $|E(H_i)|<|V(H_i)|$, then $H_i$ must be a tree, which has a vertex of degree 1. Therefore, $|E(H_i)|=|V(H_i)|$ for all $i \in [\nu]$.

Now, let $F$ be any connected component of $H$, and $F_S$ be the underlying simple graph. Since $F$ is connected either $|E(F_S)|=|V(F)|=|E(F)|$ or $|E(F_S)|=|V(F)|-1=|E(F)|-1$.

If $|E(F_S)|=|V(F)=|E(F)|$, then $F$ itself is a simple graph with  $d_{\min}(F)\geq 2$, which implies that $F$ is a cycle of length $|V(F)|$.

On the other hand, if $|E(F_S)|=|V(F)|-1=|E(F)|-1$, then $F_S$ is a tree. But any tree has at least two degree one vertices, and one extra multi-edge cannot add to both their degrees unless the tree is just an isolated edge. This implies that $F_S$ is an isolated edge, and $F$ is an isolated doubled edge.
\end{proof}

As in Lemma \ref{lm:moment_comparison}, the following lemma identifies the set of multi-graphs for which the moments are equal.

\begin{lem}For any multi-subgraph $H=(V(H), E(H))$ of $G_n$ define
$$Z(H)=\prod_{(i,j)\in E(H)}\sum_{a=1}^c\left(\pmb 1\{Y_i=a\}-\frac{1}{c}\right)\left(1\{Y_j=a\}-\frac{1}{c}\right), \text{ and } T(H)=\prod_{(i,j)\in E(H)}\sum_{a=1}^cS_{ia}S_{ja}.$$
If $|V(H)|=|E(H)|$, then $\E(Z(H)) =\E(T(H))$.
\label{lm:moment_equal}
\end{lem}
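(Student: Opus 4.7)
The plan is to reduce to a structural classification of $H$ via Observation \ref{obs:cycle_edge_vertex}, and then verify equality of the two expectations on each canonical piece, relying only on the fact that the pairwise second moments of $U_{v,a}:=\pmb 1\{Y_v=a\}-\frac{1}{c}$ and $S_{v,a}$ agree. Isolated vertices of $H$ contribute nothing to either $Z(H)$ or $T(H)$, so I may assume that $H$ has none, and then split into two cases according to whether $d_{\min}(H)=1$ or $d_{\min}(H)\geq 2$.

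If there is a vertex $v$ of degree $1$ in $H$ with unique neighbor $w$, then the only factor of $Z(H)$ involving $v$ is the expression $\sum_a U_{v,a}U_{w,a}$, which is linear in the $U_{v,a}$. Since $Y_v$ is independent of the remaining coloring and $\E(U_{v,a})=0$, conditioning on everything outside $Y_v$ immediately gives $\E(Z(H))=0$. The identical argument with $\E(S_{v,a})=0$ gives $\E(T(H))=0$, so equality is trivial.

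In the remaining case $d_{\min}(H)\geq 2$, the hypothesis $|V(H)|=|E(H)|$ together with Observation \ref{obs:cycle_edge_vertex} forces $H$ to be a disjoint union of cycles and isolated doubled edges. Independence of $\{Y_v\}_{v\in V(H)}$ (respectively of $\{\vec S_v\}_{v\in V(H)}$) across distinct vertices lets both $\E(Z(H))$ and $\E(T(H))$ factor over connected components, so it suffices to prove the equality on a single cycle or a single doubled edge. On any such component every vertex has degree exactly $2$: expanding the outer product $\prod_{(i,j)\in E(H)}\sum_a(\cdot)$ and grouping contributions by vertex, each $v$ produces a single bilinear factor $U_{v,a}U_{v,b}$ (respectively $S_{v,a}S_{v,b}$), where $a,b$ are the two summation indices associated to the two edges incident to $v$. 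By independence across vertices, $\E(Z(H))$ becomes a multi-index sum of products of second moments $\E(U_{v,a}U_{v,b})$, and $\E(T(H))$ the analogous sum of $\E(S_{v,a}S_{v,b})$. A direct computation gives
\[
\E(U_{v,a}U_{v,b})=\frac{1}{c}\pmb 1\{a=b\}-\frac{1}{c^2}=\E(S_{v,a}S_{v,b}),
\]
so the two sums are term-by-term equal and $\E(Z(H))=\E(T(H))$.

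The only real content is that $|V(H)|=|E(H)|$ together with $d_{\min}(H)\geq 2$ is precisely what forces every vertex of $H$ to have degree exactly $2$, so that no moment beyond the second is ever probed. The main obstacle, if any, is simply the bookkeeping of the summation indices in the cycle case, but since each vertex contributes only a genuine bilinear form in its own variables, the proof never touches higher-order features of $U$ or $S$.
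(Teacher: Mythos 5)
Your proof is correct and follows essentially the same route as the paper's: dispose of the $d_{\min}(H)=1$ case by a mean-zero conditioning argument, invoke Observation \ref{obs:cycle_edge_vertex} to reduce to disjoint unions of cycles and doubled edges (so every vertex has degree exactly $2$), factor over components and over vertices by independence, and match the second moments $\E(U_{v,a}U_{v,b})=\frac{1}{c}\pmb 1\{a=b\}-\frac{1}{c^2}=\E(S_{v,a}S_{v,b})$. The only difference is that you spell out the expansion of the product of color-sums into per-vertex bilinear factors a bit more explicitly than the paper does.
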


\begin{proof}
If $d_{\min}(H)=1$, by arguments similar to Lemma \ref{lm:degree_one}, $\E(Z(H)) =\E(T(H))=0$.
Therefore, it suffices to assume that $d_{\min}(H)\geq 2$ and $|V(H)|=|E(H)|$. By Observation \ref{obs:cycle_edge_vertex}, $H$ is a disjoint union of cycles and isolated doubled edges.
Since both $Z(H)$ and $T(H)$ factorize over connected components, w.l.o.g. $H$ can be assumed to be either an isolated doubled edge or a cycle. More generally, it suffices to show that $\E(T(H))=\E(Z(H))$ for any multigraph $H$ with each vertex having degree $2$. Now, it is easy to see that since the random variables corresponding to each vertex are independent in both the cases, it suffices to prove
$$\E \left(\pmb1\{Y_i=a\}-\frac{1}{c}\right)\left(\pmb1\{Y_i=b\}-\frac{1}{c}\right)=\E S_{ia}S_{ib},$$
for any $a, b\in [c]$. This follows on noting that if $a= b$ both sides above equal $\frac{1}{c}(1-\frac{1}{c})$, whereas for $a\neq b$ both sides above equal $-\frac{1}{c^2}$.
\end{proof}

Using this lemma it can now be shown that the conditional moments of $\Gamma_n$ and $\Delta_n$ are asymptotically close, whenever the graph sequence $G_n$ converges to $W\in \sW$ such that $\int_{[0, 1]^2}W(x, y)dxdy>0$, that is, $G_n$ is dense.

\begin{lem}
Suppose the sequence of random graphs $\{G_n\}_{n=1}^\infty$ converges in  distribution to a limit $W\in \sW$ with $\int_{[0, 1]^2}W(x, y)dxdy>0$ almost surely. Then for $\Gamma_n$ and $\Delta_n$ as defined in (\ref{eq:gamma_n}) and (\ref{eq:tn}), and for every fixed $k\geq 1$,
$$|\E (\Delta_n^k|G_n)-\E (\Gamma_n^k|G_n)|\stackrel{\sP}{\rightarrow}0.$$
\label{lm:chisquare_difference_zero}
\end{lem}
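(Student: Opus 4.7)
The plan is to multinomially expand both conditional moments and group contributions by the isomorphism type of the multi-subgraph of $G_n$ spanned by the chosen edges. This gives
\begin{align*}
\E(\Gamma_n^k\,|\,G_n) &= (2|E(G_n)|)^{-k/2}\sum_{H}M(G_n,H)\,\E(Z(H)),\\
\E(\Delta_n^k\,|\,G_n) &= (2|E(G_n)|)^{-k/2}\sum_{H}M(G_n,H)\,\E(T(H)),
\end{align*}
where the sum runs over the finite collection of isomorphism classes of multi-graphs $H$ with $k$ edges and no isolated vertex. It suffices to show each term of the difference is $o_P(1)$.

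I would partition the sum into three cases. First, if $d_{\min}(H)=1$ then $H$ has a leaf vertex $i$; integrating out $Y_i$ (respectively $\vec S_i$) and using $\E(\pmb 1\{Y_i=a\}-1/c)=0$ and $\E S_{ia}=0$ makes the entire product vanish, so $\E(Z(H))=\E(T(H))=0$. Second, if $d_{\min}(H)\ge 2$, the handshaking identity forces $|V(H)|\leq |E(H)|$; at the boundary $|V(H)|=|E(H)|$, Observation~\ref{obs:cycle_edge_vertex} identifies $H$ as a disjoint union of cycles and doubled edges, and Lemma~\ref{lm:moment_equal} then gives $\E(Z(H))=\E(T(H))$, so these terms cancel exactly between the two sums. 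Third, for $d_{\min}(H)\ge 2$ with $|V(H)|<|E(H)|$, the two expectations may genuinely differ and must be shown negligible.

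For the third case I would use the trivial embedding bound $M(G_n,H)\lesssim_H |V(G_n)|^{|V(H)|}$ together with $H$-dependent uniform bounds $|\E(Z(H))|,|\E(T(H))|\lesssim_H 1$ (the latter via standard Gaussian moment estimates for the independent centered families $\vec S_v$). The hypothesis $\int_{[0,1]^2}W(x,y)\,dxdy>0$ almost surely ensures that $|E(G_n)|/|V(G_n)|^2\pto \frac{1}{2}\int W>0$, so with probability tending to one $|E(G_n)|\ge c_0|V(G_n)|^2$ for some positive $c_0$. On this event the corresponding term is bounded by $C_H\,|V(G_n)|^{|V(H)|-|E(H)|}=o(1)$, hence is $o_P(1)$, and because the outer sum is over a finite family the lemma follows. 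The main obstacle, and the reason the density assumption is indispensable, is exactly this third regime: the normalization $(2|E(G_n)|)^{-k/2}\asymp |V(G_n)|^{-k}$ just cancels the embedding count on the extremal multi-graphs (where $|V(H)|=|E(H)|=k$) and gains a strict factor of $|V(G_n)|^{-1}$ per excess edge for every other multi-graph. In sparser regimes this cushion disappears and the chi-square limit of Theorem~\ref{th:chisquare} would no longer hold, consistent with the normal limits of Theorems~\ref{th:normal} and~\ref{th:normal_fixed}.
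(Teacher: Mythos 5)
Your proposal is correct and follows essentially the same route as the paper's proof: multinomial expansion indexed by multi-subgraph isomorphism type, exact cancellation via Lemma~\ref{lm:moment_equal} when $|V(H)|=|E(H)|$ (and vanishing when there is a leaf), and the crude bound $M(G_n,H)\lesssim_H |V(G_n)|^{|V(H)|}$ combined with the density of $G_n$ to kill the terms with $|V(H)|<|E(H)|$. Your case split on $d_{\min}(H)$ is a minor (and arguably cleaner) reorganization of the paper's split on $|V(H)|$ versus $|E(H)|$, but the substance is identical.
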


\begin{proof}By Equation (\ref{eq:zn}),
\begin{eqnarray}
|\E (\Delta_n^k|G_n)-\E(\Gamma_n^k|G_n)|&\leq&
\sum_{\substack{H\in \cH_k\\ }}\frac{M(G_n, H)}{(2|E(G_n)|)^{\frac{k}{2}}}\left|\E(T(H))-\E(Z(H))\right|,
\label{eq:zn_chisquare}
\end{eqnarray}
where $\cH_k$ is the collection of all multi-subgraphs of $G_n$ with exactly $k$ edges and no isolated vertex.
If $H\in \cH_k$ is such that $|V(H)|>|E(H)|$, then $H$ must have an isolated vertex and $\E(T(H))=\E(Z(H))=0$. Moreover, if  $H\in \cH_k$ is such that $|V(H)|=|E(H)|$, then by Lemma \ref{lm:moment_equal} $\E(T(H))=\E(Z(H))$. Therefore, (\ref{eq:zn_chisquare}) simplifies to
\begin{eqnarray*}
|\E (\Delta_n^k|G_n)-\E(\Gamma_n^k|G_n)|&\lesssim_k&\sum_{\substack{H\in \mathcal{H}_k\\|V(H)|<|E(H)|}}\frac{N(G_n, H)}{(2|E(G_n)|)^{\frac{k}{2}}}\lesssim_k \sum_{\substack{H\in \mathcal{H}_k\\|V(H)|<|E(H)|}}\frac{|V(G_n)|^{|V(H)|}}{(2|E(G_n)|)^{\frac{|E(H)|}{2}}},
\end{eqnarray*}
where the last term follows from noting that $N(G_n, H)\lesssim_H |V(G_n)|^{|V(H)|}$ for any $H$ and $G_n$. Now, since $\int_{[0,1]^2}W(x,y)dxdy>0$ almost surely and $$\frac{2|E(G_n)|}{|V(G_n)|^2\int_{[0,1]^2}W(x,y)dxdy}\stackrel{\sP}{\rightarrow 1},$$ it follows that $\frac{|V(G_n)|^{|V(H)|}}{(2|E(G_n)|)^{\frac{|E(H)|}{2}}}=O_P(|V(G_n)|^{|V(H)|-|E(H)|})$, which  goes to zero in probability for all
$H\in \mathcal{H}_k$ such that $|V(H)|<|E(H)|$. This completes the proof of the lemma.
\end{proof}

\subsubsection{Completing the Proof of Theorem \ref{th:chisquare}}

As the conditional moments of $\Gamma_n$ and $\Delta_n$ are asymptotically close, it remains to analyze the limiting distribution of $\Delta_n$. In this section it will be shown that $\Delta_n$ converges to $\frac{1}{2c}\chi^2_{c-1}(W)$, where $$\chi_{c-1}(W):=\sum_{i=1}^\infty\left(\frac{\lambda_i(W)}{(\sum_{j=1}^\infty\lambda^2_j(W))^{\frac{1}{2}}}\right)\xi_i,$$
where $\{\xi_i\}_{i\in \N}$ are independent $\chi_{(c-1)}-(c-1)$ random variables.

The first step is to show that the random variable $\chi^2_{c-1}(W)$, which is a infinite sum of centered chi-square random variables, is well defined.

\begin{ppn}\label{ppn:chisquare_define}
Let $(a_1,a_2,\cdots)$ be an infinite sequence of random variables such that $\sum_{j=1}^\infty a_j^2\stackrel{a.s.}{=}1$. Given $(\xi_1,\xi_2,\cdots)$ independent $\chi^2_c-c$ random variables independent of the sequence $(a_1,a_2,\cdots)$, the sum
$S:=\sum_{j=1}^\infty a_j\xi_j$ converges almost surely and in $L^1$. Further, for $|t|<1/8$ the moment generating function of $S$ is finite, and is given by
$$\E e^{t S}=\E \left(\prod_{j=1}^\infty\frac{1}{(1-2t a_j)^{\frac{c}{2}}}\right).$$
\end{ppn}

\begin{proof}
By defining $\sG:=\sigma(\{a_j\}_{j\in \N})$ and $S_n:=\sum_{j=1}^na_j\xi_j$ and $\sF_n:=\sigma(\sigma(\{\xi_j\}_{j=1}^n),\sG)$, it follows that $(S_n,\sF_n)$ is a martingale, with
$$\limsup_{n}\E S_n^2=2c\left(\E \sum_{j=1}^\infty a_j^2\right)=2c<\infty,$$
and $S_n$ converges almost surely and in $L^1$ \cite{durrett}.

To compute the moment generating function, first note that $h: [-1/2,1/2]\mapsto \R$ given by $h(z)=-\log (1-z)-z- z^2$ has a unique global maxima at $z=0$, and so $-\log(1-z)-z\le z^2$ for $|z|\le 1/2$. Therefore, for any $|t|< 1/8$,
\begin{align*}
\log\E (e^{2t S_n}|\sG)=c\sum_{j=1}^n\left(\frac{-\log(1-4t a_j)-4t a_j
}{2}\right)\le 8ct^2,
\end{align*}
and $\E (e^{2t S_n}|\sG)\le e^{8ct ^2}<\infty.$
It follows that $e^{t S_n}$ and $\E(e^{t S_n}|\sG)$ are both uniformly integrable, and
\begin{align*}
\E e^{t S}=  \lim_{n\rightarrow\infty}\E\left(\E( e^{t S_n}|\sG)\right)
=\lim_{n\rightarrow\infty}\E \left(\prod_{j=1}^n\frac{1}{(1-2t a_j)^{\frac{c}{2}}}\right)=\E \left(\prod_{j=1}^\infty\frac{1}{(1-2t a_j)^{\frac{c}{2}}}\right)<\infty.
\end{align*}
\end{proof}

To prove Theorem \ref{th:chisquare} we now invoke Lemma \ref{abc} with $\Gamma_n$ and $\Delta_n$. Lemma \ref{lm:chisquare_difference_zero} shows that (\ref{eq:abc1}) holds. The following lemma takes the first step towards (\ref{eq:abc2}) by showing that the limiting distribution of $\Delta_n$ is a weighted sum of chi-square random variables.

\begin{lem}If a sequence of graphs $\{G_n\}_{n=1}^\infty$ converges in the cut-metric to a limit $W\in \sW$, then  for $|t|<c/4$
$$\lim_{n\rightarrow\infty}\E e^{t \Delta_n}=\E e^{t \frac{\chi^2_{c-1}(W)}{2c}}.$$
\end{lem}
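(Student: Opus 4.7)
The plan is to diagonalize the Gaussian quadratic form $Q(G_n)$ explicitly through the spectral decomposition of $A(G_n)$, reducing $\Delta_n$ to a weighted sum of iid centered chi-squares whose MGF can be computed in closed form and then passed to the limit.

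Write $A(G_n)=\sum_{i=1}^n\lambda_i(A_n)u_iu_i^T$ in an orthonormal basis, set $\vec X_a=(X_{1a},\ldots,X_{na})^T$, and define $Z_{ia}:=u_i^T\vec X_a$. Since an orthogonal rotation of iid $N(0,1/c)$ entries is again iid $N(0,1/c)$, the array $\{Z_{ia}\}_{i,a}$ is itself iid $N(0,1/c)$. A short calculation using $\vec S_a=\vec X_a-\bar X\mathbf 1$ gives $u_i^T\vec S_a=Z_{ia}-\bar Z_{i\cdot}$, so
\[Q(G_n)=\tfrac12\sum_{a=1}^c\vec S_a^TA(G_n)\vec S_a=\tfrac12\sum_{i=1}^n\lambda_i(A_n)\sum_{a=1}^c(Z_{ia}-\bar Z_{i\cdot})^2.\]
By Cochran's theorem $c\sum_a(Z_{ia}-\bar Z_{i\cdot})^2\sim\chi^2_{c-1}$ independently across $i$. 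Setting $\xi_i^{(n)}:=c\sum_a(Z_{ia}-\bar Z_{i\cdot})^2-(c-1)$, which are iid $\chi^2_{c-1}-(c-1)$, and using $\sum_i\lambda_i(A_n)=\mathrm{tr}(A_n)=0$ to kill the mean-correction term, one obtains the exact representation
\[\Delta_n=\frac{1}{2c}\sum_{i=1}^n\frac{\lambda_i(A_n)}{\sqrt{2|E(G_n)|}}\,\xi_i^{(n)}.\]

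The conditional MGF can now be computed directly: for $|t|<c/4$, the standard $\chi^2_{c-1}-(c-1)$ MGF together with trace-zero gives
\[\E[e^{t\Delta_n}\mid G_n]=\prod_{i=1}^n\bigl(1-\tfrac{t\lambda_i(A_n)}{c\sqrt{2|E(G_n)|}}\bigr)^{-(c-1)/2}.\]
Since $|\lambda_i(A_n)|\le\sqrt{2|E(G_n)|}$, each argument satisfies $|t\lambda_i/(c\sqrt{2|E(G_n)|})|\le 1/4$, so the elementary bound $-\log(1-z)-z\le z^2$ (valid for $|z|\le 1/2$) combined with the identity $\sum_i\lambda_i(A_n)^2/(2|E(G_n)|)=1$ yields the uniform estimate $\log\E[e^{t\Delta_n}\mid G_n]\le Ct^2$. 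This estimate will serve as the dominant for the final step.

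For the passage to the limit, I use cut-metric convergence $G_n\to W$, which via the eigenvalue convergence theorem for graph limits and Parseval's identity $\sum_i\lambda_i^2(W)=\|W\|_2^2$ supplies the termwise convergence $\lambda_i(A_n)/\sqrt{2|E(G_n)|}\to\lambda_i(W)/\|W\|_2$ for each fixed $i$. Split the log-product into a finite head of $K$ indices, which converges pointwise, and a tail whose contribution is controlled uniformly by the quadratic Taylor bound above. Using both $\sum_i\lambda_i^2(A_n)/(2|E(G_n)|)=1$ and $\sum_i\lambda_i^2(W)/\|W\|_2^2=1$, the tail contribution can be made arbitrarily small uniformly in $n$ by taking $K$ large. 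This yields
\[\log\E[e^{t\Delta_n}\mid G_n]\longrightarrow-\tfrac{c-1}{2}\sum_{i=1}^\infty\log\bigl(1-\tfrac{t\lambda_i(W)}{c\|W\|_2}\bigr),\]
which by Proposition~\ref{ppn:chisquare_define} (applied with degree-of-freedom parameter $c-1$ and coefficients $\lambda_i(W)/\|W\|_2$ whose squares sum to $1$) equals $\log\E[\exp(t\chi^2_{c-1}(W)/(2c))\mid W]$. A final dominated-convergence argument with the $\exp(Ct^2)$ dominant extracts the unconditional limit.

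The main obstacle will be the passage from finite-index eigenvalue convergence to convergence of the full (possibly infinite) product: under cut-metric convergence only the top finitely many eigenvalues of $A(G_n)$ converge individually, so the remaining spectral mass must be uniformly absorbed into the tail on the limit side. The safety margin $|t|<c/4$, rather than the naive $|t|<c$, is precisely what keeps each factor $(1-t\lambda_i/(c\sqrt{2|E(G_n)|}))$ bounded away from zero and makes the quadratic Taylor estimate uniformly applicable, reducing all tail control to the trivial identity $\sum_i\lambda_i(A_n)^2/(2|E(G_n)|)=1$.
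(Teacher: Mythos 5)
Your proposal is correct and takes essentially the same route as the paper: spectrally diagonalize the Gaussian quadratic form, identify the conditional moment generating function as the exact product $\prod_i\left(1-t\tilde\lambda_i(G_n)/c\right)^{-(c-1)/2}$ with $\tilde\lambda_i(G_n)=\lambda_i(G_n)/\|\vec\lambda(G_n)\|_2$, pass to the limit via the spectral consequences of graphon convergence, and use a uniform bound on the conditional MGF to exchange limit and expectation before invoking Proposition \ref{ppn:chisquare_define}. The only (minor) divergence is in how the infinite product's limit is justified — the paper expands its logarithm as a power series in the normalized eigenvalue power sums $t(C_x,G_n)/t(C_2,G_n)^{x/2}$ and uses convergence of cycle densities, while you use termwise eigenvalue convergence together with a head/tail split controlled by $-\log(1-z)-z\le z^2$ and $\sum_i\tilde\lambda_i(G_n)^2=1$ — and your deterministic $e^{Ct^2}$ dominant is exactly the bound needed for the paper's uniform-integrability step.
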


\begin{proof}
Using the spectral decomposition, write the adjacency matrix $A(G_n)$ as $\sum_{j=1}^n\lambda_j(G_n)\vec p_j \vec p_j'$. For $a\in [c]$ and $j\in [|V(G_n)|]$, set $\pmb S_a:=(S_{1a}, S_{2a}, \cdots, S_{|V(G_n)|a})'$ and $y_{aj}=\vec p_j'\pmb S_a$. Then
\begin{align*}
\sum_{a=1}^c\pmb S_{a}'A(G_n)\pmb S_a=\sum_{j=1}^{|V(G_n)|}\lambda_j(G_n)\sum_{a=1}^cy_{aj}^2.
\end{align*}
Since $\Cov(y_{aj},y_{bj})=-1/c^2$ for $1\le a< b\le c$, it follows that $A_j:=\sum_{a=1}^cy_{aj}^2\sim \frac{1}{c}\chi^2_{c-1}$. Also, since
$\Cov(y_{ai},y_{aj})=0$ for $i\neq j$, it follows that $\{A_j\}_{j=1}^n$ are i.i.d., and
\begin{align*}
\E\left(e^{t \Delta_n}\Big|G_n\right)=\E \left(\exp\left\{\frac{t\sum_{j=1}^n \lambda_j(G_n)A_j}{2\sqrt{\sum_{j=1}^n{\lambda^2_j(G_n)}}}\right\}\Big|G_n\right)
=\prod_{j=1}^n\left(1-\tilde\lambda_j(G_n)\frac{t}{c}\right)^{\frac{1-c}{2}}
\end{align*}
with $\tilde\lambda_j(G_n):=\frac{\lambda_j(G_n)}{||\vec \lambda(G_n)||_2}.$

Now, since $G_n\stackrel{\sD}{\Rightarrow} W$, $$\{t(C_g, G_n),g\ge 3\}\stackrel{\sD}{\rightarrow} \{t(C_g, W),g\ge 3\} $$
on $[0,1]^{\N}$
with $t(C_g,W)=\sum_{i=1}^\infty \lambda^g_i(W)$, where $\{\lambda_j(W)\}_{j\in\mathbb{N}}$ are the eigenvalues of $W$ (see \cite[Section 7.5]{lovasz_book}). Therefore,
\begin{align*}
\log\prod_{j=1}^n\left(1-\tilde\lambda_j(G_n)\frac{t}{c}\right)^{\frac{1-c}{2}}=&\left(\frac{c-1}{2}\right)\sum_{x=2}^\infty\frac{t^x}{xc^x}\cdot\frac{t(C_g,G_n)}{t(C_2,G_n)^{\frac{x}{2}}}\\
\stackrel{\sD}{\rightarrow}&\left(\frac{c-1}{2}\right)\sum_{x=2}^\infty\frac{t^x}{xc^x}\cdot\frac{t(C_g,W)}{t(C_2,W)^{\frac{x}{2}}}\\
=&\sum_{j=1}^\infty \log\left(1-\frac{\lambda_j(W)}{\sqrt{\sum_{j=1}^\infty \lambda_j^2(W)}}\cdot\frac{t}{c}\right)^{\frac{1-c}{2}},
\end{align*}
where the first equality uses $\sum_{j=1}^n\tilde\lambda_j(G_n)=0$.
Finally, as $$\prod_{j=1}^n\left(1-\tilde\lambda_j(G_n)\frac{t}{c}\right)^{\frac{1-c}{2}}\le \left(1-\frac{t}{c}\right)^{\frac{1-c}{2}}<\infty,$$ it
follows by uniform integrability that
$$\E e^{t \Delta_n}=\E\E \left(e^{t \Delta_n}\Big|G_n\right)\rightarrow \E \prod_{j=1}^\infty \left(1-\frac{\lambda_j(W)}{\sqrt{\sum_{j=1}^\infty \lambda_j^2(W)}}\cdot\frac{t}{c}\right)^{\frac{1-c}{2}}. $$
By Proposition \ref{ppn:chisquare_define}, the RHS is the moment generating function of $\frac{\chi^2_{c-1}(W)}{2c}$, and the proof is complete.
\end{proof}


Finally, to complete the proof of Theorem \ref{th:chisquare} using Lemma \ref{abc}, observe: since $\E e^{t \Delta_n}$ converges to $\E e^{t\frac{\chi^2_{c-1}(W)}{2c}}$, for some $t >0$, the arguments identical to those used in (\ref{eq:exponential_moment}) imply condition (\ref{eq:abc2}) for $\Delta_n$.


\subsection{More Examples}

The limiting chi-square distribution of the complete graph was illustrated before in Example \ref{ex:completegraph}. A few other simple examples where Theorem \ref{th:chisquare} can be used to determine the limiting distributions are given below.

\begin{example}(Complete Bipartite Graph) Consider the complete bipartite graph $K_{\frac{n}{2}, \frac{n}{2}}$, which converges to the limit $W(x, y)=\pmb 1\{(x-1/2)(y-1/2)<0\}$. It is easy to see that the only non-zero eigenvalues of $W$ are $\pm \frac{1}{2}$. Therefore, by Theorem \ref{th:chisquare} it follows that
$$ \frac{1}{n}\left(N(K_{\frac{n}{2}, \frac{n}{2}})-\frac{n^2}{4c}\right)\stackrel{\sD}\rightarrow \frac{1}{4c}\left(\xi_1-\xi_2\right),$$
where $\xi_1$ and $\xi_2$ are independent $\chi^2_{(c-1)}$ random variables.
\end{example}

\begin{example}(Inhomogeneous Random Graphs)
Let $f:[0,1]^2\rightarrow [0,1]$ be a symmetric continuous function. Consider the random graph model where an edge $(i, j)$ is present with probability $f(\frac{i}{n}, \frac{j}{n})$.
Therefore, whenever $\int_{[0, 1]^2}f(x, y)dxdy>0$ the limit theorem in (\ref{eq:chisquare}) holds.
In particular, the Erd\H os-Renyi random graph $G(n, p)$ can be obtained by taking the function $f(x, y)=p$. In this case, $p$ is the only non-zero eigenvalue of the operator $f$ and
$$ \frac{1}{n}\left(N(G(n, p))-\frac{|E(G_n)|}{c}\right)\stackrel{\sD}\rightarrow \frac{p^{\frac{1}{2}}}{2c}\cdot \left(\chi^2_{(c-1)}-(c-1)\right).$$
Note that this reduces to Example \ref{ex:completegraph}, for $c=2$ and $p=1$.
\end{example}


\section{Extremal Examples: Stars and Cycles}
\label{sec:starscycles}


Another relevant question is whether it is possible to expect a similar Poisson universality result for other subgraphs, under the uniform coloring scheme? This section begins by proving Proposition \ref{propn:easy} which shows that we may not get Poisson mixtures in the limit while counting monochromatic $r$-stars, in a uniform $c$-coloring of an $n$-star.

\subsection{Monochromatic Stars}

Consider the $(n-1)$-star, $K_{1, n-1}$ with vertices labelled by $[n]$, with the central vertex labeled 1. Color the vertices of $K_{1, n-1}$, uniformly from $[c]$, independently. Consider the limiting distribution of the number of monochromatic $r$-stars $K_{1, r-1}$ generated by this random coloring, where $r$ is a fixed constant. If $Y_i$ denotes the color of vertex $i$, the random variable is
$$T_{r, n}=\sum_{\substack{S\subseteq [n]\backslash \{1\}\\|S|=r-1}} \prod_{j \in S}\pmb \{Y_1=Y_j\}.$$

Proposition \ref{propn:easy} shows that the limiting behavior of $T_{r, n}$ cannot converge to a mixture of Poissons. This illustrates that the phenomenon of universality of the Poisson approximation that holds for the number of monochromatic edges, does not extend to arbitrary subgraphs. In particular, it is not even true for the 2-star, which is the simplest extension of an edge.

\subsubsection{Proof of Proposition \ref{propn:easy}}Note that if the number of monochromatic edges in $G_n=K_{1, n-1}$ is $N(G_n)$, then
$$T_{r, n}\stackrel{\sD}{=}{N(G_n)\choose r}.$$
If $n/c\rightarrow 0$, then from Theorem \ref{th:poissonuniversal} $N(G_n)\stackrel{\sP}\rightarrow 0$ and so $T_{r, n}\stackrel{\sP}\rightarrow 0$. Similarly, if $n/c\rightarrow \infty$, $T_{r, n}\stackrel{\sP}\rightarrow \infty$.

Finally, if $\frac{n}{c}\rightarrow \lambda$, the number of monochromatic edges $N(G_n)$ in $K_{1, n-1}$ converges to $X\sim Poisson(\lambda)$, by Theorem \ref{th:poissonuniversal}. This implies that $$T_{r, n}\stackrel{\sD}{=}{N(G_n)\choose r}\stackrel{\sD}{\rightarrow} {X\choose r}=\frac{X(X-1)\cdots(X-r+1)}{r!}.$$
This random variable does not assign positive masses at all non-negative integers, and so it cannot be a mixture of Poisson variates.

\subsection{Monochromatic Cycles}

\begin{figure*}[h]
\centering
\begin{minipage}[c]{0.4\textwidth}
\centering
\includegraphics[width=2.8in]
    {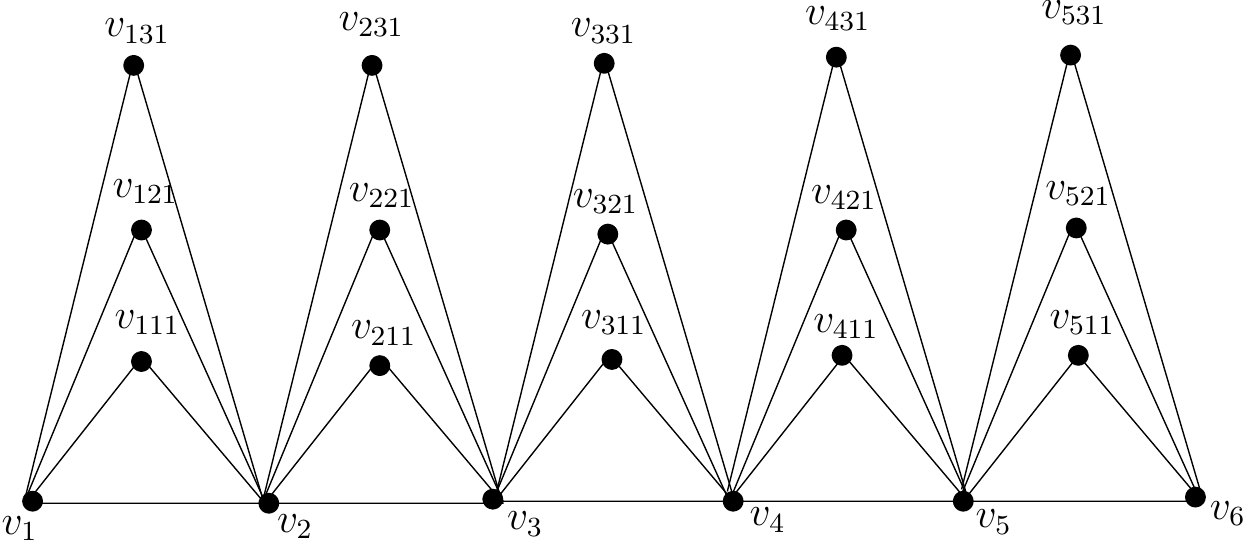}\\
\small{(a)}
\end{minipage}
\begin{minipage}[c]{0.5\textwidth}
\centering
\includegraphics[width=3.2in]
    {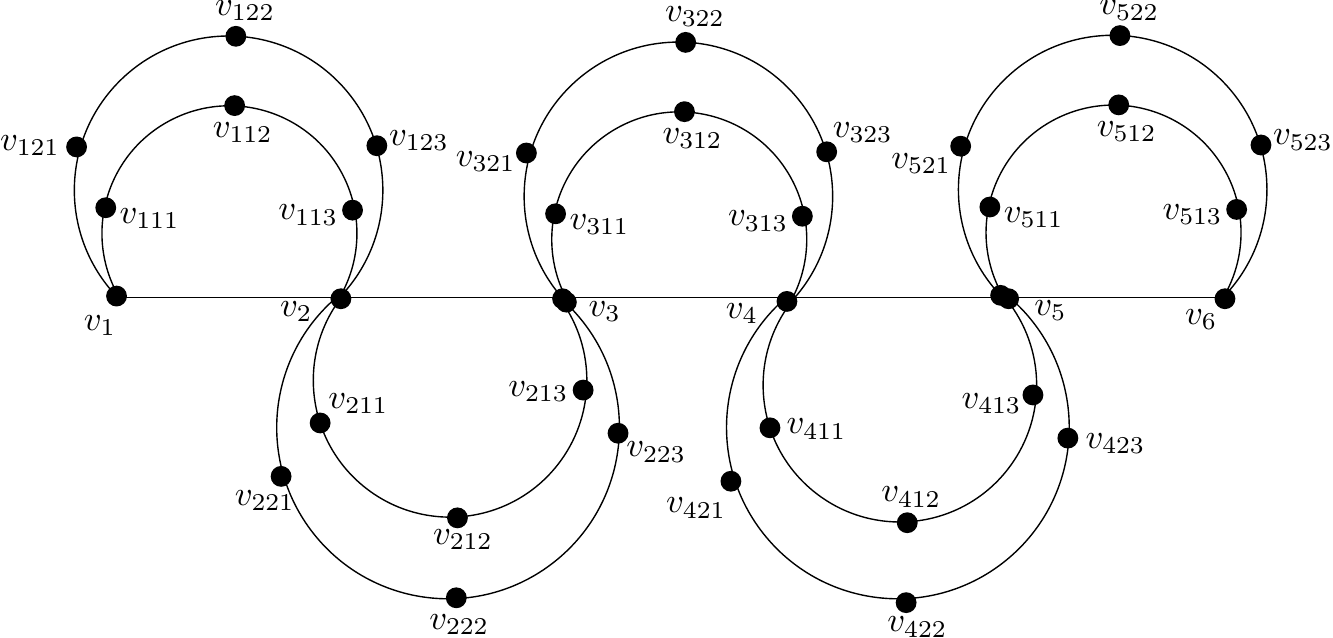}\\
\small{(b)}
\end{minipage}
\caption{(a) The graph $G_{5,3,3}$, and (b) the graph $G_{5,2,5}$}
\label{fig:pathcyclegraph}
\end{figure*}

Recall that the number of monochromatic edges $N(G_n)$ converges to $Poisson(\lambda)$ whenever $|E(G_n)|/c\rightarrow \lambda$. The limiting distribution of the number of edges can only be a non-trivial mixture of Poissons when $|E(G_n)|/c\rightarrow Z$, and $Z$ has a non-degenerate distribution. We now construct a graph $G_n$ where the number of monochromatic $g$-cycles in a uniform $c$-coloring of $G_n$ converges in distribution to a non-trivial mixture of Poissons even when $|N(G_n, C_g)|/c^{g-1}$ converges to a fixed number $\lambda$. This phenomenon, which cannot happen in the case of edges, makes the problem of finding the limiting distribution of the number of monochromatic cycles,  much more challenging.

For $a, b$ positive integers and $g\ge 3$, define a graph $G_{a, b, g}$ as follows: Let $$V(G_{a, b, g})=\{v_1, v_2, \ldots v_{a+1}\}\bigcup_{i=1}^a\bigcup_{j=1}^b \{v_{ijk}: k \in \{1, 2, \ldots g-2\}\}.$$ The edges are such that vertices $v_1, v_2, \ldots v_{a+1}$ form a path of length $a$, and for every $i \in [a]$ and $j \in[b]$, $v_i, v_{ij1}, v_{ij2}, \ldots v_{ijg-2}, v_{i+1}$ form a cycle of length $g$ (Figure \ref{fig:pathcyclegraph} shows the structure of graphs $G_{5,3,3}$ and $G_{5, 2, 5}$, and the corresponding vertex labelings.). Note that graph $G_{a, b, g}$ has $b(g-2)+a+1$  vertices, $b(g-1)+a$ edges, and $ab$ cycles of length $g$.

We consider a uniform $c$-coloring of the vertices of $G_{a, b, g}$ and count the number of monochromatic $g$-cycles. Let $Y_i$ be the color of vertex $v_i$ and $Y_{ijk}$ the color of vertex $v_{ijk}$, for $i \in [a]$ and $j\in [b]$. The random variable $$Z_{a,b, g}:=Z(G_{a,b, g}):=\sum_{i=1}^a \sum_{j=1}^b \prod_{k=1}^{g-2}\pmb 1\{Y_{i}=Y_{i+1}=Y_{ijk}\},$$
which counts the number of monochromatic $g$-cycles in the graph $G_{a,b, g}$. The following proposition shows that there exists a choice of parameters $a, b, c$ such that $|N(G_n, C_g)|/c^{g-1}\rightarrow \lambda$ and $Z_{a,b, g}$ converges in distribution to a non-trivial mixture of Poissons.

\begin{ppn}For $a=\lambda n$ and $b=n^{g-2}$ and $c=n$, $Z_{a,b, g}\stackrel{\sD}\rightarrow Poisson(W)$, where $W\sim Poisson(\lambda)$.
\end{ppn}

\begin{proof}Let $\vec{Y}=(Y_1, Y_2, \cdots Y_{a+1})$ and note that
$$\prod_{k=1}^{g-2}\pmb 1\{Y_{i}=Y_{i+1}=Y_{ijk}\}\Big| \vec Y \sim \dBer(1/c^{g-2}) \text{ and }\sum_{j=1}^b \prod_{k=1}^{g-2}\pmb 1\{Y_{i}=Y_{i+1}=Y_{ijk}\} \Big| \vec Y\sim \dBin(b, 1/c^{g-2}).$$
This implies that
\begin{equation}
\E\left(e^{itZ_{a, b, g}}\right)=\E\left(\prod_{i=1}^a\E\left(e^{it \sum_{j=1}^b \prod_{k=1}^{g-2}\pmb 1\{Y_{i}=Y_{i+1}=Y_{ijk}\}}\Big|\vec Y\right)\right)=\E\left(1-\frac{1}{c^{g-2}}+\frac{e^{it}}{c^{g-2}}\right)^{bN_a},
\label{eq:chfcycle}
\end{equation}
where $N_a=\sum_{i=1}^a\pmb 1\{Y_i=Y_{i+1}\}$, is the number of monochromatic edges in the  path $v_1, v_2, \ldots, v_{a+1}$.

Substituting $a=\lambda n:=a_n$ and $b=n^{g-2}:=b_n$ and $c=n:=c_n$, we have $N(G, C_g)/c_n^{g-1}=a_nb_n/c_n=\lambda$. With this choice $a_n,b_n, c_n$, we have by Theorem \ref{th:birthdayuniformuniversal}, $N_{a_n}$ converges in distribution to $W:=Poisson(\lambda)$, as $a_n/c_n=\lambda$. Therefore,
$$\left(1-\frac{1}{c_n^{g-2}}+\frac{e^{it}}{c_n^{g-2}}\right)^{b_n N_{a_n}}=e^{b_n N_{a_n}\log\left(1-\frac{1}{c_n^{g-2}}+\frac{e^{it}}{c_n^{g-2}}\right)}\stackrel{\sD}\rightarrow e^{(e^{it}-1)W}.$$
By the dominated convergence theorem we have
$$\E\left(e^{itZ_{a_n, b_n, g}}\right)\stackrel{\sD}\rightarrow \E\left(e^{(e^{it}-1)W}\right),$$
which the characteristic function of $Poisson(W)$, where $W\sim Poisson(\lambda)$.
\end{proof}

\begin{remark}
We were unable to construct an example of a graph for which the number of monochromatic triangles converges to some distribution that is not a mixture of Poissons, when $N(G_n, C_3)/c^2\rightarrow \lambda$.  The above construction and the inability to construct any counterexamples, even for triangles, lead us to believe that some kind of Poisson universality holds for cycles as well. More formally, we conjecture that the number of monochromatic triangles in a uniform random coloring of any graph sequence $G_n$ converges in distribution to a random variable that is a mixture of Poissons, whenever $|N(G_n, C_3)|/c^{2}\rightarrow \lambda$, for some fixed $\lambda>0$.
\end{remark}

\noindent{\bf Acknowledgement:} The authors are indebted to Noga Alon for pointing out an error in the proof of Theorem \ref{th:alon_cycle} in an earlier draft. The authors also thank Sourav Chatterjee, Xiao Fang, and Bjarne Toft for helpful comments and suggestions. The authors further thank the anonymous referees for valuable comments.

\section{Appendix: Conditional Convergence to Unconditional Convergence}


There are many conditions on modes of convergence which ensure the convergence of a sequence of joint distributions when it is known that the associated sequence of marginal and conditional distributions converge \cite{sethuraman,sweeting}. This section gives a proof of a technical lemma which allows conclusions about the limiting distribution of a random variable from its conditional moments. The lemma is used in this paper in the final steps of our proofs of all the main theorems. 

\begin{lem}\label{abc}
Let $(\Omega_n,\sF_n,\mathbb{P}_n)$ be a sequence of probability spaces, and $\sG_n\subset \sF_n$ be a sequence of sub-sigma fields. Also, let $(X_n,Y_n)$ be a sequence of random variables on $(\Omega_n,\sF_n)$, and assume that for any $k\ge 1$ the conditional expectations  $U_{n,k}:=\mathbb{E}(X_n^k|\sG_n), V_{n,k}:=\mathbb{E}(Y_n^k|\sG_n)$ exist as finite random variables, such that
\begin{align}
\limsup_{n\rightarrow\infty}\P(|U_{n,k}-V_{n,k}|>\varepsilon)=0.
\label{eq:abc1}
\end{align}
Moreover, if there exists $\eta>0$ such that
\begin{align}
\limsup_{k\rightarrow\infty}\limsup_{n\rightarrow \infty}\P\left(\left|\frac {\eta^k}{k!}U_{n,k}\right|>\varepsilon\right)=0,
\label{eq:abc2}
\end{align}
then for any $t\in \R$, $\E e^{itX_n}-\E e^{itY_n}\rightarrow 0$.
\end{lem}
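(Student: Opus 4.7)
The plan is to compare the conditional characteristic functions $\E(e^{itX_n}\mid\sG_n)$ and $\E(e^{itY_n}\mid\sG_n)$ via a Taylor expansion that matches them term-by-term against the conditional moments $U_{n,k},V_{n,k}$, and then to average over $\sG_n$ on a carefully chosen high-probability event. For any even integer $K\ge 2$, I would use the standard bound $|e^{iz}-\sum_{k=0}^{K-1}(iz)^k/k!|\le |z|^K/K!$ (Taylor's theorem with integral remainder) applied at $z=tX_n$; after integrating conditionally and noting $\E(|X_n|^K\mid\sG_n)=U_{n,K}$ (valid because $K$ is even), this gives
\[
\Bigl|\E(e^{itX_n}\mid\sG_n)-\sum_{k=0}^{K-1}\tfrac{(it)^k}{k!}U_{n,k}\Bigr|\le \tfrac{|t|^K}{K!}U_{n,K},
\]
and the analogous inequality for $Y_n$. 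Combining via the triangle inequality then yields
\[
|\E(e^{itX_n}\mid\sG_n)-\E(e^{itY_n}\mid\sG_n)|\le \sum_{k=0}^{K-1}\tfrac{|t|^k}{k!}|U_{n,k}-V_{n,k}|+\tfrac{|t|^K}{K!}(U_{n,K}+V_{n,K}).
\]

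Next, for fixed $|t|\le \eta$ and $\delta>0$, I would introduce the good event
\[
B_{n,K,\delta}:=\bigcap_{k=0}^{K}\{|U_{n,k}-V_{n,k}|<\delta\}\cap\{\eta^K U_{n,K}/K!<\delta\}.
\]
On $B_{n,K,\delta}$, the bound $V_{n,K}\le U_{n,K}+\delta$ combined with $|t|^K\le \eta^K$ reduces the right-hand side above to $O(\delta\,e^{|t|})$; off $B_{n,K,\delta}$, the same quantity is trivially at most $2$. Averaging and using $|\E e^{itX_n}-\E e^{itY_n}|\le \E|\E(e^{itX_n}\mid\sG_n)-\E(e^{itY_n}\mid\sG_n)|$ then gives
\[
|\E e^{itX_n}-\E e^{itY_n}|\le O(\delta\,e^{|t|})+2\,\P(B_{n,K,\delta}^c).
\]
Hypothesis (\ref{eq:abc1}) drives $\P(|U_{n,k}-V_{n,k}|\ge\delta)\to 0$ as $n\to\infty$ for each fixed $k\le K$, and hypothesis (\ref{eq:abc2}) then lets me pick $K$ so large that $\limsup_n \P(\eta^K U_{n,K}/K!\ge\delta)$ is arbitrarily small; this drives $\limsup_n\P(B_{n,K,\delta}^c)$ to zero, and sending $\delta\to 0$ completes the argument in the range $|t|\le \eta$.

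The hard part will be extending to $|t|>\eta$, since the factor $(|t|/\eta)^K$ in the Taylor remainder then blows up with $K$ and the approach above collapses. The plan is to invoke hypothesis (\ref{eq:abc2}) a second time to show that both sequences $\{X_n\},\{Y_n\}$ are tight (Markov's inequality applied to the even conditional moment $U_{n,K}$, which is bounded in probability for some large but fixed $K$), and that any distributional subsequential limits $X_\infty,Y_\infty$ have characteristic functions that extend analytically to a horizontal strip about the real line (the exponential-moment control built into (\ref{eq:abc2}) yields uniform bounds on all moments of the limits). Agreement of these limiting characteristic functions on the open interval $(-\eta,\eta)$, already established in the previous step, then propagates to all of $\R$ by analytic continuation, forcing $X_\infty\stackrel{d}{=}Y_\infty$; as every convergent subsequence yields the same identification, the full sequence satisfies $\E e^{itX_n}-\E e^{itY_n}\to 0$ for every real $t$.
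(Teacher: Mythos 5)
Your argument for $|t|\le\eta$ is correct and is essentially the paper's own base case: the same Taylor expansion with remainder $|t|^K U_{n,K}/K!$, with your good event $B_{n,K,\delta}$ playing the role of the paper's convergence-in-probability statements. The genuine gap is in the extension to $|t|>\eta$. You propose to pass to subsequential weak limits $X_\infty,Y_\infty$, argue that their characteristic functions are analytic in a strip, and continue the identity from $(-\eta,\eta)$ to all of $\R$. But hypothesis (\ref{eq:abc2}) controls the conditional moments $U_{n,k}$ only \emph{in probability}; it gives no bound on $\E X_n^k=\E U_{n,k}$ (which may even be infinite for every $n$ and $k$), and it transfers no exponential integrability to the limit. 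Concretely, take $X_n=Y_n=Z$ with $Z$ lognormal and $\sG_n=\sF_n$, so that $U_{n,k}=V_{n,k}=Z^k$. Then (\ref{eq:abc1}) is trivial, and (\ref{eq:abc2}) holds because $\eta^k|Z|^k/k!\rightarrow 0$ almost surely (these are the terms of the convergent series for $e^{\eta|Z|}$); yet the limit $Z$ satisfies $\E e^{tZ}=\infty$ for every $t>0$, so its characteristic function is not analytic in any strip, and indeed the lognormal is not even determined by its moments. So the analyticity your continuation needs is simply not a consequence of the hypotheses, and without it the argument collapses: two distinct distributions can have characteristic functions agreeing on a whole interval around the origin, so agreement of $\phi_{X_\infty}$ and $\phi_{Y_\infty}$ on $(-\eta,\eta)$ does not force $X_\infty\stackrel{\sD}{=}Y_\infty$.

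The paper circumvents this by never leaving the conditional world. It proves the stronger claim that for every $t\in\R$ and every integer $\ell\ge 0$, $|\E(e^{itX_n}X_n^\ell|\sG_n)-\E(e^{itY_n}Y_n^\ell|\sG_n)|\stackrel{\sP}{\rightarrow}0$, by induction on $p$ over the ranges $|t|\le p\eta$: for $|t|\in(p\eta,(p+1)\eta]$ it sets $t_0=t-\eta t/|t|$ and Taylor-expands $e^{itx}x^\ell$ around $t_0$, so the coefficients of the expansion are exactly the quantities $\E(e^{it_0X_n}X_n^{r+\ell}|\sG_n)$ handled by the induction hypothesis, while the remainder is again controlled by (\ref{eq:abc2}) and its analogue for $V_{n,k}$. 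This is a stepwise continuation performed at the level of conditional expectations, where the moment hypothesis actually lives, and it is precisely why the polynomial factor $X_n^\ell$ must be carried along in the induction. To repair your proof you would have to carry out the continuation conditionally on $\sG_n$ on the good event, which in effect reproduces this induction.
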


\begin{proof}
First  note that
$$\P\left(\left|\frac {\eta^k}{k!}V_{n,k}\right|>\varepsilon\right)\le \P\left(\left|\frac {\eta^k}{k!}U_{n,k}\right|>\frac{\varepsilon}{2}\right)+\P\left(|U_{n,k}-V_{n,k}|>\frac{\varepsilon k!}{2|\eta|^k}\right).
$$
Taking limits as $n\rightarrow \infty$, and using (\ref{eq:abc1}) and (\ref{eq:abc2}), it follows that
\begin{equation}
\limsup_{k\rightarrow\infty}\limsup_{n\rightarrow \infty}\P\left(\left|\frac {\eta^k}{k!}V_{n,k}\right|>\varepsilon\right)=0.
\label{ob3}
\end{equation}

To prove the lemma it suffices to show that for any $t\in \R$ and an non-negative integer $\ell$,
\begin{equation}\label{works?}
|\E (e^{itX_n}X_n^\ell|\sG_n)-\E (e^{itY_n}Y_n^\ell|\sG_n)|\stackrel{\sP}{\rightarrow}0.
\end{equation}
Indeed the lemma follows immediately from (\ref{works?}), as follows: Setting $\ell=0$,
$|\E (e^{itX_n}|\sG_n)-\E (e^{itY_n}|\sG_n)|$ converges to $0$ in probability. Since $|\E (e^{itX_n}|\sG_n)-\E (e^{itY_n}|\sG_n)|$ is also bounded by $2$ in absolute value, the dominated convergence theorem gives the desired result.

Therefore, it thus remains to prove the claim (\ref{works?}). For this, first assume $|t|\le \eta$, and let $\ell$ be fixed but arbitrary non-negative integer. By a Taylor's series expansion, for any $k\in\mathbb{N}$,
$$\left| e^{it}-\sum_{r=0}^{k-1} \frac{(it)^r}{r!}\right|\le \frac{\eta^k}{k!},$$
and so
\begin{align}
|\E (e^{itX_n}X_n^\ell|\sG_n)-\E (e^{itY_n}Y_n^\ell|\sG_n)|\le &\sum_{r=0}^{k-1}\frac{|t|^r}{r!}|U_{n,r+\ell}-V_{n,r+\ell}|+\frac{|\eta|^k}{k!}U_{n,k}+\frac{|\eta|^k}{k!}V_{n,k}.
\label{ob4}
\end{align}
From (\ref{ob4}) taking limits as $n\rightarrow\infty$ followed by $k\rightarrow\infty$, and using (\ref{eq:abc1}), (\ref{eq:abc2}) and (\ref{ob3}) gives $|\E (e^{itX_n}X_n^\ell|\sG_n)-\E (e^{itY_n}Y_n^\ell|\sG_n)|\stackrel{\sP}{\rightarrow}0$, for $|t|\le \eta$.

The proof of (\ref{works?}) is now completed by induction. Suppose the result holds for any $|t|\le p\eta$, for some $p\in \mathbb{N}$, and let $|t|\in (p\eta, (p+1)\eta]$. Then  setting $t_0:=t-\frac{t}{|t|}\eta$ we have that $|t-t_0|=\eta$, and $|t_0|\le p\eta$. Expanding in a Taylor's series around $t_0$ \begin{equation*}
|\E (e^{itX_n}X_n^\ell|\sG_n)-\E (e^{itY_n}Y_n^\ell|\sG_n)|\le \sum_{r=0}^{k-1}\frac{\eta^r}{r!}|\E (e^{it_0X_n}X_n^{r+\ell}|\sG_n)-\E (e^{it_0Y_n}Y_n^{r+\ell}|\sG_n)|+\frac{\eta^k}{k!}U_{n,k}+\frac{\eta^k}{k!}V_{n,k}.
\end{equation*}
Since $|t_0|\le p\eta$, letting $n\rightarrow\infty$ followed by $k\rightarrow\infty$ it follows by the induction hypothesis and (\ref{eq:abc1}) and (\ref{eq:abc2}) that $|\E (e^{itX_n}X_n^\ell|\sG_n)-\E (e^{itY_n}Y_n^\ell|\sG_n)|\stackrel{\sP}{\rightarrow}0.$
This completes the proof of (\ref{works?}) by induction.
\end{proof}

As mentioned earlier, the reason for Lemma \ref{abc} is that separate convergences of the conditional distribution $(Y_n|X_n)$ and the marginal distribution $X_n$ do not in general, imply that $Y_n$ converges in law to $Y$.
This is illustrated in the following example:

\begin{remark} Let $(X_n, Y_n)$ be a sequence of random variables such that $X_n \sim Uniform(0, 1/n)$, and given $X_n$, the conditional distribution of $Y_n$ is given by  
$$\P_n(Y_n\in A|X_n=x)=K(x,A)
:=\left\{
\begin{array}{cc}
\pmb  1\{0\in A\}  & \text{ if } x>0,   \\
\lambda(A) &    \text{ if } x=0;
\end{array}
\right.,
$$
for $A\in \sB([0,1])$,
where $\lambda(A)$ is the Lebesgue measure of $A$. 
In terms of random variables, this means that if $X_n=0$, then pick $Y_n$ to be $U(0,1)$, whereas if $X_n>0$, set $Y_n=0$. As a candidate for the joint limiting distribution, define random variables $(X, Y)$ such that $X=0$ almost surely. Define the conditional distribution of $Y$ given $X$ by the same kernel $K$ as above, that is, $\P(Y\in A|X=x)=K(x, A)$, for $A\in \sB([0,1])$ and $x \in [0, 1]$.
It is trivial to check that $X_n$ converges in distribution to $X$. Also since the conditional distribution of $(Y_n|X_n=x)$ is the same as the conditional distribution of $(Y|X=x)$, the convergence of conditional distribution is also immediate. However, $Y_n$ does not converge to $Y$ in distribution, as $Y_n=0$ almost surely, whereas $Y\sim U(0,1)$. 

Typically, conditions like the {\it weak-Feller} property of the kernel \cite{lasserre} or {\it set-wise} convergence of the marginals (via the Vitali-Hahn-Saks theorem \cite{royden}) are required for joint convergence. Nevertheless, in our case the proof of the unconditional convergence follows without having to invoke any such general theorems as sums of i.i.d. random variables can be dealt with directly, which in turn provide good approximations to the random variables studied in the paper.

\end{remark}


\begin{thebibliography}{99}
\bibitem{aldous}
D. Aldous, {\it Probability approximations via the Poisson clumping heuristic}, Applied
Mathematical Sciences, Vol. 77, Springer-Verlag, New York, 1989.
\bibitem{alon81}
N. Alon, On the number of subgraphs of prescribed type of graphs with a given number of edges, {\it Israel Journal of Mathematics}, Vol. 38, 116--130, 1981.
\bibitem{alon86}
N. Alon, On the number of certain subgraphs contained in graphs with a given number of edges, {\it Israel Journal of Mathematics}, Vol. 53, 97--120, 1986.
\bibitem{arratia}
R. Arratia, L. Goldstein, and L. Gordon, Poisson approximation and the Chen-Stein method,  {\it Statistical Science}, Vol. 5 (4), 403--424, 1990.
\bibitem{barbourholstjanson}
A. D. Barbour,  L. Holst, L. and S. Janson, {\it Poisson Approximations}, Oxford University Press, Oxford, 1992.
\bibitem{beran}
R. Beran, Tail probabilities of noncentral quadratic forms,  {\it The Annals of Statistics}, Vol. 3 (4), 969--974, 1975.
\bibitem{bollobasjansonriordan}
B. Bollob\'as, S. Janson, and O. Riordan, The phase transition in inhomogeneous random graphs, {\it Random Structures and Algorithms}, Vol. 31 (1), 3--122, 2007.
\bibitem{graph_limits_I}
C. Borgs, J.T. Chayes, L. Lov\'asz, V.T. S\'os, and K. Vesztergombi, Convergent sequences of dense graphs I: Subgraph frequencies, metric properties and testing, {\it Advances in Mathematics}, Vol. 219, 1801--1851, 2009.
\bibitem{graph_limits_II}
C. Borgs, J.T. Chayes, L. Lov\'asz, V.T. S\'os, and K. Vesztergombi, Convergent sequences of dense graphs II. Multiway cuts and statistical physics, {\it Annals of Mathematics}, Vol. 176, 151--219, 2012.
\bibitem{poissonmixturediscretechoice}
M. Burda, M. Harding, and J. Hausman, A Poisson mixture model of discrete choice, {\it Journal of Econometrics}, Vol. 166, 184--203, 2012.
\bibitem{camarripitmanejp}
M. Camarri and J. Pitman, Limit distributions and random trees derived from the birthday problem with unequal probabilities, {\it Electron. J. Probab.}, Vol. 5, Paper 2, 1--18, 2000.
\bibitem{birthdayexchangeability}
A. Cerquetti and S. Fortini, A Poisson approximation for colored graphs under exchangeability, {\it Sankhya: The Indian Journal of Statistics} Vol. 68(2), 183--197, 2006.
\bibitem{chatterjee_normal_approximation}
 S. Chatterjee, A new method of normal approximation, {\it Annals of Probability}, Vol. 36 (4), 1584--1610, 2008.
\bibitem{chatterjee_pd}
S. Chatterjee and P. Diaconis, Estimating and understanding exponential random graph models, {\it The Annals of Statististics}, Vol. 41 (5), 2428--2461, 2013.
\bibitem{CDM}
S. Chatterjee, P. Diaconis, and E. Meckes, Exchangeable pairs and Poisson approximation, {\it Electron. Encyclopedia Probab.}, 2004.
\bibitem{graham}
F. R. K. Chung, R. L. Graham, and R. M. Wilson, Quasi-random graphs, {\it Combinatorica}, Vol. 9 (4), 345--362, 1989.
\bibitem{poissonmixture}
K. Church and W. A. Gale, Poisson mixtures, {\it Natural Language Engineering}, Vol. 1(2), 163--190, 1995.
\bibitem{conlonfoxzhao}
D. Conlon, J. Fox, and Y. Zhao, Extremal results in sparse pseudorandom graphs, {\it Advances in  Mathematics}, Vol. 256, 206--290, 2014.
\bibitem{dasguptasurvey}
A. DasGupta, The matching, birthday and the strong birthday problem: a contemporary review, {\it J. Statist. Plann. Inference}, Vol. 130, 377--389, 2005.
\bibitem{dejong}
P. De Jong, A central limit theorem for generalized quadratic forms, {\it Probab. Theory Related Fields},
Vol. 75, 261--277, 1987.
\bibitem{diaconisholmes}P. Diaconis and S. Holmes, A bayesian peek into Feller I, {\it Sankhya, Series A}, Vol. 64 (3), 820--841, 2002.
\bibitem{diaconismosteller}
P. Diaconis and F. Mosteller, Methods for studying coincidences, {\it The
Journal of the American Statistical Association}, Vol. 84(408), 853--861, 1989.
\bibitem{chromaticbook}
F. M Dong, K. M. Koh, and K. L. Teo, {\it Chromatic polynomials and chromaticity of graphs}, World Scientific Publishing Company, 2005.
\bibitem{durrett}
R. Durrett, {\it Probability: Theory and Examples}, 4th edition, Cambridge U. Press, 2010.
\bibitem{erdoscompletesubgraphs}
P. Erd\H os, On the number of complete subgraphs contained in certain graphs, {\it Publ. Math. Inst. Hungar. Acad. Sci.}, Vol. 7, 459-464, 1962.
\bibitem{fadnavis}
S. Fadnavis, A generalization of the birthday problem and the chromatic polynomial, {\it arXiv:1105.0698v2 [math.CO]}, 2011.
\bibitem{xiao}
X. Fang, A universal error bound in the CLT for counting monochromatic edges in uniformly colored graphs, {\it arXiv:1408.0509 [math.PR]}, 2014.
\bibitem{hypergraphcopies}
E. Friedgut and J. Kahn, On the number of copies of one hypergraph in another, {\it Israel Journal of Mathematics},  Vol. 105 (1), 251--256, 1998.
\bibitem{gelman}
A. Gelman, {\it 600: The Average American Knows How Many People?}, New York Times, Page D7, Feb. 19, 2013.
\bibitem{gotze}
F. G\"otze and A.N. Tikhomirov, Asymptotic distribution of quadratic forms,  {\it The Annals of Probability} Vol. 27, 1072--1098, 1999.
\bibitem{gotze_applications}
F. G\"otze and A.N. Tikhomirov, Asymptotic distribution of quadratic forms and applications,  {\it Journal of Theoretical Probability} Vol. 15, 423--475, 2002.
\bibitem{greenwoodyule}
M. Greenwood and G. V. Yule, An inquiry into the nature of frequency distributions representative of multiple happenings with particular reference to the occurrence of multiple attack of disease or of repeated accidents, {\it J. Roy. Statist. Soc.}, Vol. 83, 255--279, 1920.
\bibitem{hall}
P. Hall, Central limit theorem for integrated square error of multivariate nonparametric density estimators, {\it Journal of Multivariate Analysis}, Vol. 14, 1--16, 1984.
\bibitem{janson_subgraph}
S. Janson, K. Oleszkiewicz, A. Ruci\'nski, Upper tails for subgraph counts in random graphs, {\it Israel Journal of Mathematics}, Vol. 142 (1), 61--92, 2002.
\bibitem{toft_book}
T. R. Jensen and B. Toft, {\it Graph Coloring Problems}, Wiley-Interscience Series in Discrete Mathematics and Optimization, 1995.
\bibitem{toft_unsolved}
T. R. Jensen and B. Toft, Unsolved Graph Coloring Problems, {\it manuscript in preparation}, 2013.
\bibitem{killworthbernard}
P. Killworth and H. Bernard, The reversal small-world experiment, {\it Social Networks}, Vol. 1 (2), 1979.
\bibitem{lasserre}
J. B. Lasserre, Quasi-Feller Markov chains, {\it Journal of Applied Mathematics and Stochastic Analysis}, Vol. 13(1), 15--24, 2000.
\bibitem{lecam}
L. Le-Cam  and R. Traxler, On the asymptotic behavior of mixtures of Poisson distributions, {\it Zeitschrift fur Wahrscheinlichkeitstheorie}, Vol. 44, 1--45, 1978.
\bibitem{lovasz_book}
L. Lov\'asz, {\it Large Networks and Graph Limits}, Colloquium Publications, Vol. 60, 2012.
\bibitem{nt_I}
G. L. Nemhauser and L. E. Trotter, Properties of vertex packing and independence system polyhedra, {\it Mathematical Programming}, Vol. 6, 48--61, 1974.
\bibitem{nt_II}
G. L. Nemhauser and L. E. Trotter, Vertex packings: Structural properties and algorithms,  {\it Mathematical Programming}, Vol. 8, 232--248, 1975.
\bibitem{rademacher}
I. Nourdin, G. Peccati and G. Reinert, Stein's method and stochastic analysis of Rademacher sequences, {\it Electronic Journal of  Probabability}, Vol. 15 (55), 1703--1742, 2010.
\bibitem{weiner_chaos}
I. Nourdin, G. Peccati and G. Reinert, Invariance principles for homogeneous sums: universality of Gaussian Wiener chaos, {\it Annals of Probability}, Vol. 38 (5), 1947--1985, 2010.
\bibitem{greater_three}
I. Nourdin, G. Peccati, G. Poly and R. Simone, Classical and free fourth moment theorems: universality and thresholds, {\it arXiv:1407.6216 [math.PR]}, 2014.
\bibitem{rotar}
V. I. Rotar, Some limit theorems for polynomials of second degree, {\it Theory Probab. Appl.}, Vol. 18
499--507, 1973.
\bibitem{royden}
H. L. Royden, {\it Real Analysis}, Pearson Education, Third Edition, 1988.
\bibitem{schrijver}
A. Schrijver, {\it Combinatorial Optimization: Polyhedra and Efficiency}, Algorithms and Combinatorics, Vol. 24, Berlin, Springer, 2003.
\bibitem{sethuraman}
J. Sethuraman,  Some limit theorems for joint distributions, {\it Sankhya: The Indian Journal of Statistics, Series A}, Vol. 23 (4), 1961.
\bibitem{stanley}
R. P. Stanley, A symmetric function generalization of the chromatic polynomial of a graph,
{\it Advances in Mathematics}, Vol. 111 (1), 166--194, 1995.
\bibitem{stein86}
C. Stein, Approximate computation of expectations, {\it Institute of Mathematical Statistics Lecture Notes}, Monograph Series, Vol. 7, 1986.
\bibitem{sweeting}T. J. Sweeting, On conditional weak convergence, {\it Journal of Theoretical Probability}, Vol. 2 (4), 461--474, 1989.
\end{thebibliography}
\end{document}